\newtheorem{theorem}{Theorem}[section]
\newtheorem{corollary}[theorem]{Corollary}
\newtheorem{lemma}[theorem]{Lemma}
\newtheorem{proposition}[theorem]{Proposition}
\newtheorem{definition}[theorem]{Definition}
\newcommand{\Prop}{\mathsf{Prop}}
\newcommand{\X}{\mathbb{X}}
\newcommand{\val}[1]{[\![{#1}]\!]}
\newcommand{\descr}[1]{(\![{#1}]\!)}
\renewcommand{\phi}{\varphi}
\newcommand{\cnomm}{\mathbf{m}}
\newcommand{\nomj}{\mathbf{j}}
\newcolumntype{C}[1]{>{\centering\arraybackslash}p{#1}}
\newcolumntype{L}[1]{>{\arraybackslash}p{#1}}
\newtheorem{remark}[theorem]{Remark}
\title{\bf Modelling competing theories}
\author{{\bf Willem Conradie$^a$} and {\bf Andrew Craig$^b$} and {\bf Alessandra Palmigiano$^{b,c}$} and {\bf Nachoem Wijnberg$^{d,e}$}\\
$^a$School of Mathematics, University of the Witwatersrand \\ $^b$Department of Mathematics and Applied Mathematics, University of Johannesburg, \\
$^c$Faculty of Technology, Policy and Management, Delft University of Technology\\
$^d$ College of Business and Economics,  University of Johannesburg\\
$^e$ Amsterdam Business School, University of Amsterdam}
\begin{document}
\maketitle
\begin{abstract}
We introduce a complete many-valued semantics for two normal lattice-based modal logics. This semantics is based on reflexive many-valued graphs. We discuss an interpretation and possible applications of this logical framework in the context of the formal analysis of the interaction between (competing) scientific theories.\\
{\bf Keywords:} Non distributive modal logic, Graph-based semantics, Competing theories.
\end{abstract}

\section{Introduction}
The contributions of this paper lie at the intersection of several strands of research. They are rooted in  the generalized Sahlqvist theory  for normal LE-logics \cite{CoPa:non-dist, conradie2016constructive}, i.e.~those logics algebraically captured by varieties of normal lattice expansions (LEs) \cite{gehrke2001bounded}. Via canonical extensions and discrete duality, basic normal LE-logics of arbitrary signatures and  a large class of their axiomatic extensions can be uniformly endowed with complete relational semantics of different kinds, of which  those of interest to the present paper are relational structures based on {\em formal contexts} \cite{gehrke2006generalized, galatos2013residuated, conradie2016categories,Tarkpaper, greco2018algebraic} and {\em reflexive graphs} \cite{conradie2015relational, graph-based-wollic}. In a mathematical setting in which the original discrete duality for perfect normal LEs has been relaxed to a discrete adjunction for complete normal LEs, these semantic structures have yielded uniform theoretical developments in the algebraic proof theory \cite{greco2018algebraic} and in the model theory \cite{conradie2018goldblatt} of LE-logics, and also insights on possible interpretations of LE-logics which have generated new opportunities for applications. In particular, via polarity-based semantics, in \cite{conradie2016categories}, the basic non-distributive modal logic and some of its axiomatic extensions are interpreted as  {\em epistemic logics of categories and concepts}, and in \cite{Tarkpaper}, the corresponding `common knowledge'-type construction is used to give an epistemic-logical formalization of the notion of {\em prototype} of a category; in \cite{roughconcepts, ICLA2019paper}, polarity-based semantics for non-distributive modal logic is proposed as an encompassing framework for the integration of rough set theory \cite{pawlak} and formal concept analysis \cite{ganter2012formal}, and in this context, the basic non-distributive modal logic is interpreted as  the logic of {\em rough concepts};    via  graph-based semantics, in \cite{graph-based-wollic}, the same logic is interpreted as the logic of {\em informational entropy}, i.e.~an inherent boundary to knowability due e.g.~to perceptual, theoretical, evidential or linguistic limits. In the graphs $(Z, E)$ on which the relational structures are based, the relation $E$  is interpreted as the indiscernibility relation induced by informational entropy, much in the same style as Pawlak's  approximation spaces in rough set theory. However, the key difference is that,  rather than generating modal operators which associate any subset of $Z$ with its definable $E$-approximations,  $E$ generates a complete lattice (i.e.~the lattice of $E^c$-concepts). In this approach, concepts are not definable approximations of predicates, but rather they represent `all there is to know',  i.e.~the theoretical horizon to knowability, given the inherent boundary  encoded into $E$ (in their turn, $E^c$-concepts are approximated by means of the additional relations of the graph-based relational structures from which the semantic modal operators arise). Interestingly, $E$ is required to be reflexive but in general neither transitive nor symmetric, which is in line with proposals in rough set theory \cite{wybraniec1989generalization,yao1996-Sahlqvist,vakarelov2005modal} that indiscernibility does not need to give rise to equivalence relations.

In this paper, we start exploring the many-valued version of the graph-based semantics of  \cite{graph-based-wollic} for two axiomatic extensions of the basic normal non-distributive modal logic, and in particular their potential for modelling situations in which informational entropy derives from the theoretical frameworks under which empirical studies are conducted. 
\section{Preliminaries}
This section is based on \cite[Section 2.1]{graph-based-wollic} and \cite[Section 7.2]{roughconcepts}.
\subsection{Basic normal nondistributive modal logic}
\label{sec:logics}
Let $\Prop$ be a (countable or finite) set of atomic propositions. The language $\mathcal{L}$ of the {\em basic normal nondistributive modal logic} is defined as follows:
\[ \varphi := \bot \mid \top \mid p \mid  \varphi \wedge \varphi \mid \varphi \vee \varphi \mid \Box \varphi \mid  \Diamond\varphi,\] 
where $p\in \Prop$. 
The {\em basic}, or {\em minimal normal} $\mathcal{L}$-{\em logic} is a set $\mathbf{L}$ of sequents $\phi\vdash\psi$  with $\phi,\psi\in\mathcal{L}$, containing the following axioms:
		{\small{
			\begin{align*}
				&p\vdash p, && \bot\vdash p, && p\vdash \top, & &  &\\
				&p\vdash p\vee q, && q\vdash p\vee q, && p\wedge q\vdash p,\\ 
				&p\wedge q\vdash q, && \top\vdash \Box \top, && \Box p\wedge \Box q \vdash \Box ( p\wedge q),\\
                &\Diamond \bot\vdash \bot, && \Diamond p \vee \Diamond  q \vdash \Diamond ( p \vee q) &
			\end{align*}
			}}
	%
	%
		and closed under the following inference rules:
		{\small{
		\begin{displaymath}
			\frac{\phi\vdash \chi\quad \chi\vdash \psi}{\phi\vdash \psi}
			\quad
			\frac{\phi\vdash \psi}{\phi\left(\chi/p\right)\vdash\psi\left(\chi/p\right)}
		\end{displaymath}
		\begin{displaymath}
			\frac{\chi\vdash\phi\quad \chi\vdash\psi}{\chi\vdash \phi\wedge\psi}
			\quad
			\frac{\phi\vdash\chi\quad \psi\vdash\chi}{\phi\vee\psi\vdash\chi}
		\end{displaymath}
		\begin{displaymath}
			\frac{\phi\vdash\psi}{\Box \phi\vdash \Box \psi}
\quad
\frac{\phi\vdash\psi}{\Diamond \phi\vdash \Diamond \psi}
		\end{displaymath}
		}}
An {\em $\mathcal{L}$-logic} is any  extension of $\mathbf{L}$  with $\mathcal{L}$-axioms $\phi\vdash\psi$. Relevant to what follows are the axiomatic extensions of $\mathbf{L}$ generated by $\Box\bot\vdash \bot$ and $\top\vdash \Diamond\top$, and by $\Box\phi\vdash \phi$ and $\phi\vdash\Diamond\phi$. Let $\mathbf{L}_0$ (resp.\ $\mathbf{L}_1$) be the axiomatic extension obtained by adding $\Box\bot\vdash \bot$ (resp.\ $\Box p\vdash p$) to $\mathbf{L}$. Notice that $\mathbf{L}_1$ is an extension of $\mathbf{L}_0$.

\subsection{Many-valued enriched formal contexts}
Throughout this paper,  we let $\mathbf{A} = (D, 1, 0, \vee, \wedge, \otimes, \to)$ denote an arbitrary but fixed 
complete frame-distributive and dually frame-distributive, 
commutative and associative residuated lattice  (understood as the algebra of truth-values) such that $1\to \alpha = \alpha$ for every $\alpha\in D$. 
For every set $W$, an $\mathbf{A}$-{\em valued subset}  (or $\mathbf{A}$-{\em subset}) of $W$ is a map $u: W\to \mathbf{A}$.  We let $\mathbf{A}^W$ denote the set of all $\mathbf{A}$-subsets. Clearly, $\mathbf{A}^W$ inherits the algebraic structure of $\mathbf{A}$ by defining the operations and the order pointwise. The $\mathbf{A}$-{\em subsethood} relation between elements of $\mathbf{A}^W$ is the map $S_W:\mathbf{A}^W\times \mathbf{A}^W\to \mathbf{A}$ defined as $S_W(f, g) :=\bigwedge_{z\in W }(f(z)\rightarrow g(z)) $. For every $\alpha\in \mathbf{A}$, 
let $\{\alpha/ w\}: W\to \mathbf{A}$ be defined by $v\mapsto \alpha$ if $v = w$ and $v\mapsto \bot^{\mathbf{A}}$ if $v\neq w$. Then, for every $f\in \mathbf{A}^W$,
\begin{equation}\label{eq:MV:join:generators}
f = \bigvee_{w\in W}\{f(w)/ w\}.
\end{equation}
 When $u, v: W\to \mathbf{A}$ and $u\leq v$ w.r.t.~the pointwise order, we write $u\subseteq v$.
An $\mathbf{A}$-{\em valued relation} (or $\mathbf{A}$-{\em relation}) is a map $R: U \times W \rightarrow \mathbf{A}$. Two-valued relations can be regarded as  $\mathbf{A}$-relations. In particular for any set $Z$, we let $\Delta_Z: Z\times Z\to \mathbf{A}$ be defined by $\Delta_Z(z, z') = \top$ if $z = z'$ and $\Delta_Z(z, z') = \bot$ if $z\neq z'$. An $\mathbf{A}$-relation $R: Z\times Z\to \mathbf{A}$ is {\em reflexive} if $\Delta_Z \subseteq R$.
Any $\mathbf{A}$-valued relation $R: U \times W \rightarrow \mathbf{A}$ induces  maps $R^{(0)}[-] : \mathbf{A}^W \rightarrow \mathbf{A}^U$ and $R^{(1)}[-] : \mathbf{A}^U \rightarrow \mathbf{A}^W$ defined as follows: for every $f: U \to \mathbf{A}$ and every $u: W \to \mathbf{A}$,
\begin{center}
	\begin{tabular}{r l }
$R^{(1)}[f]:$ & $ W\to \mathbf{A}$\\
		
		& $ x\mapsto \bigwedge_{a\in U}(f(a)\rightarrow R(a, x))$\\
	&\\	
 $R^{(0)}[u]: $ & $U\to \mathbf{A} $\\
		& $a\mapsto \bigwedge_{x\in W}(u(x)\rightarrow R(a, x))$\\
	\end{tabular}
\end{center}

A {\em formal}  $\mathbf{A}$-{\em context}\footnote{ In the crisp setting, a {\em formal context} \cite{ganter2012formal}, or {\em polarity},  is a structure $\mathbb{P} = (A, X, I)$ such that $A$ and $X$ are sets, and $I\subseteq A\times X$ is a binary relation. Every such $\mathbb{P}$ induces maps $(\cdot)^\uparrow: \mathcal{P}(A)\to \mathcal{P}(X)$ and $(\cdot)^\downarrow: \mathcal{P}(X)\to \mathcal{P}(A)$, respectively defined by the assignments $B^\uparrow: = I^{(1)}[B]$ and $Y^\downarrow: = I^{(0)}[Y]$. A {\em formal concept} of $\mathbb{P}$ is a pair 
$c = (\val{c}, \descr{c})$ such that $\val{c}\subseteq A$, $\descr{c}\subseteq X$, and $\val{c}^{\uparrow} = \descr{c}$ and $\descr{c}^{\downarrow} = \val{c}$.   The set $L(\mathbb{P})$  of the formal concepts of $\mathbb{P}$ can be partially ordered as follows: for any $c, d\in L(\mathbb{P})$, \[c\leq d\quad \mbox{ iff }\quad \val{c}\subseteq \val{d} \quad \mbox{ iff }\quad \descr{d}\subseteq \descr{c}.\]
With this order, $L(\mathbb{P})$ is a complete lattice, the {\em concept lattice} $\mathbb{P}^+$ of $\mathbb{P}$. Any complete lattice $\mathbb{L}$ is isomorphic to the concept lattice $\mathbb{P}^+$ of some polarity $\mathbb{P}$.} or $\mathbf{A}$-{\em polarity} (cf.~\cite{belohlavek}) is a structure $\mathbb{P} = (A, X, I)$ such that $A$ and $X$ are sets and $I: A\times X\to \mathbf{A}$. Any formal $\mathbf{A}$-context induces  maps $(\cdot)^{\uparrow}: \mathbf{A}^A\to \mathbf{A}^X$ and $(\cdot)^{\downarrow}: \mathbf{A}^X\to \mathbf{A}^A$ given by $(\cdot)^{\uparrow} = I^{(1)}[\cdot]$ and $(\cdot)^{\downarrow} = I^{(0)}[\cdot]$. 
%
These maps are such that, for every $f\in \mathbf{A}^A$ and every $u\in \mathbf{A}^X$,
\[S_A(f, u^{\downarrow}) = S_X(u, f^{\uparrow}),\]
that is, the pair of maps $(\cdot)^{\uparrow}$ and $(\cdot)^{\downarrow}$ form an $\mathbf{A}$-{\em Galois connection}.
In \cite[Lemma 5]{belohlavek}, it is shown that every  $\mathbf{A}$-Galois connection arises from some formal  $\mathbf{A}$-context. A {\em formal}  $\mathbf{A}$-{\em concept} of $\mathbb{P}$ is a pair $(f, u)\in \mathbf{A}^A\times \mathbf{A}^X$ such that $f^{\uparrow} = u$ and $u^{\downarrow} = f$. It follows immediately from this definition that if $(f, u)$ is a formal $\mathbf{A}$-concept, then $f^{\uparrow \downarrow} = f$ and $u^{\downarrow\uparrow} = u$, that is, $f$ and $u$ are {\em stable}. The set of formal $\mathbf{A}$-concepts can be partially ordered as follows:
\[(f, u)\leq (g, v)\quad \mbox{ iff }\quad f\subseteq g \quad \mbox{ iff }\quad v\subseteq u. \]
Ordered in this way, the set of the formal  $\mathbf{A}$-concepts of $\mathbb{P}$ is a complete lattice, which we denote $\mathbb{P}^+$. 

	An {\em enriched formal $\mathbf{A}$-context} (cf.~\cite[Section 7.2]{roughconcepts}) is a structure  $\mathbb{F} = (\mathbb{P}, R_\Box, R_\Diamond)$ such that $\mathbb{P} = (A, X, I)$ is a formal  $\mathbf{A}$-context and $R_\Box: A\times X\to \mathbf{A}$ and $R_\Diamond: X\times A\to \mathbf{A}$ are $I$-{\em compatible}, i.e.~$R_{\Box}^{(0)}[\{\alpha / x\}]$, $R_{\Box}^{(1)}[\{\alpha / a\}]$,  $R_{\Diamond}^{(0)}[\{\alpha / a\}]$ and $R_{\Diamond}^{(1)}[\{\alpha / x\}]$ are stable for every $\alpha \in \mathbf{A}$, $a \in A$ and $x \in X$.   
	The {\em complex algebra} of an  enriched formal $\mathbf{A}$-context $\mathbb{F} = (\mathbb{P}, R_\Box, R_\Diamond)$ is the algebra $\mathbb{F}^{+} = (\mathbb{P}^{+}, [R_{\Box}], \langle R_{\Diamond} \rangle )$ where $[R_{\Box}], \langle R_{\Diamond} \rangle : \mathbb{P}^{+} \to \mathbb{P}^{+}$ are defined by the following assignments: for every $c = (\val{c}, \descr{c}) \in \mathbb{P}^{+}$, 
	\begin{center}
	\begin{tabular}{l cl}
		$[R_{\Box}]c$ & $ =$&$  (R_{\Box}^{(0)}[\descr{c}], (R_{\Box}^{(0)}[\descr{c}])^{\uparrow})$\\
		$ \langle R_{\Diamond} \rangle c $ & $ =$&$  ((R_{\Diamond}^{(0)}[\val{c}])^{\downarrow}, R_{\Diamond}^{(0)}[\val{c}])$.\\
	\end{tabular}
	\end{center}

\begin{lemma}\label{prop:fplus}
(cf.~\cite[Lemma 15]{roughconcepts}) If $\mathbb{F} = (\mathbb{X}, R_{\Box}, R_{\Diamond})$	is an enriched formal   $\mathbf{A}$-context, $\mathbb{F}^+ = (\mathbb{X}^+, [R_{\Box}], \langle R_{\Diamond}\rangle)$ is a complete  normal lattice expansion such that $[R_\Box]$ is completely meet-preserving and $\langle R_\Diamond\rangle$ is completely join-preserving.
\end{lemma}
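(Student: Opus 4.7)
The plan is to prove three things in sequence: that $[R_\Box]$ and $\langle R_\Diamond\rangle$ are well-defined endomaps of $\mathbb{P}^+$ (whose complete-lattice structure is already recorded in the preliminaries), that $[R_\Box]$ preserves arbitrary meets, and that $\langle R_\Diamond\rangle$ preserves arbitrary joins.

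For well-definedness, I would first verify that $R_\Box^{(0)}[u]$ is a stable element of $\mathbf{A}^A$ for \emph{every} $u\in\mathbf{A}^X$ (not only for stable $u$). Writing $u=\bigvee_{x\in X}\{u(x)/x\}$ via~(\ref{eq:MV:join:generators}) and using that $R_\Box^{(0)}$ converts joins into meets (immediate from its defining formula and residuation), one has $R_\Box^{(0)}[u]=\bigwedge_{x\in X}R_\Box^{(0)}[\{u(x)/x\}]$; each summand is stable by $I$-compatibility, and a routine argument shows that meets of stable elements remain stable in any $\mathbf{A}$-Galois connection (if $u\leq u_i=u_i^{\downarrow\uparrow}$ for every $i$, monotonicity of the closure gives $u^{\downarrow\uparrow}\leq u_i^{\downarrow\uparrow}=u_i$, hence $u^{\downarrow\uparrow}\leq\bigwedge_iu_i=u$). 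The analogous statement for $R_\Box^{(1)}$, $R_\Diamond^{(0)}$, and $R_\Diamond^{(1)}$ follows by identical reasoning, confirming that $[R_\Box]c$ and $\langle R_\Diamond\rangle c$ are formal $\mathbf{A}$-concepts for every concept $c$.

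The heart of the proof is the identity $R_\Box^{(0)}[u^{\downarrow\uparrow}]=R_\Box^{(0)}[u]$ for every $u\in\mathbf{A}^X$, and dually $R_\Diamond^{(0)}[f^{\uparrow\downarrow}]=R_\Diamond^{(0)}[f]$ for every $f\in\mathbf{A}^A$. One inclusion is immediate from antitonicity of $R_\Box^{(0)}$ and $u\subseteq u^{\downarrow\uparrow}$. For the reverse inclusion, I would first check by residuation that $(R_\Box^{(0)},R_\Box^{(1)})$ is itself an antitone $\mathbf{A}$-Galois connection, so that $R_\Box^{(0)}[u]\subseteq R_\Box^{(0)}[u^{\downarrow\uparrow}]$ transposes to $u^{\downarrow\uparrow}\subseteq R_\Box^{(1)}[R_\Box^{(0)}[u]]$; the right-hand side is stable (by the previous paragraph applied to $R_\Box^{(1)}$) and contains $u$ (by the $R_\Box$-Galois unit), so it contains the smallest stable element above $u$, namely $u^{\downarrow\uparrow}$.

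With this identity in hand, complete meet-preservation of $[R_\Box]$ is mechanical: the intent of $\bigwedge_i c_i$ in $\mathbb{P}^+$ is $(\bigvee_i\descr{c_i})^{\downarrow\uparrow}$, so the extent of $[R_\Box](\bigwedge_i c_i)$ equals $R_\Box^{(0)}[(\bigvee_i\descr{c_i})^{\downarrow\uparrow}]=R_\Box^{(0)}[\bigvee_i\descr{c_i}]=\bigwedge_i R_\Box^{(0)}[\descr{c_i}]$, which is exactly the extent of $\bigwedge_i[R_\Box]c_i$; since extents determine concepts, this gives the equality. Complete join-preservation of $\langle R_\Diamond\rangle$ is proved dually, using that the extent of $\bigvee_i c_i$ is $(\bigvee_i\val{c_i})^{\uparrow\downarrow}$ and the corresponding identity for $f^{\uparrow\downarrow}$. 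The principal obstacle is the crux identity, which requires simultaneously handling two distinct $\mathbf{A}$-Galois connections (the $I$-connection and the $R_\Box$-connection) and keeping the residuation bookkeeping straight; everything else reduces to the compatibility hypothesis and the generator decomposition~(\ref{eq:MV:join:generators}).
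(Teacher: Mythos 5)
Your proof is correct and follows essentially the same route as the proof the paper relies on (the statement is imported from \cite[Lemma 15]{roughconcepts} rather than proved here): decompose arbitrary $\mathbf{A}$-subsets into the generators $\{\alpha/x\}$ via \eqref{eq:MV:join:generators}, use $I$-compatibility together with the join-to-meet behaviour of $R^{(0)}$ and $R^{(1)}$ to obtain stability of all outputs and the key identity $R_\Box^{(0)}[u^{\downarrow\uparrow}]=R_\Box^{(0)}[u]$ (the same kind of equivalences the paper records in Lemma \ref{equivalence of I-compatible-mv}), and then compute meets extent-wise and joins intent-wise. Nothing is missing; in particular, your observation that $(R_\Box^{(0)},R_\Box^{(1)})$ is itself an $\mathbf{A}$-Galois pair, so the reverse inclusion transposes to $u^{\downarrow\uparrow}\subseteq R_\Box^{(1)}[R_\Box^{(0)}[u]]$ and follows from stability of the right-hand side, is exactly the standard argument.
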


\section{Many-valued graph-based frames}\label{sec:frames}
A reflexive $\mathbf{A}$-{\em graph} is a structure $\X = (Z, E)$ such that $Z$ is a nonempty set, and $E: Z\times Z\to \mathbf{A}$ is reflexive. From now on, we will assume that all $\mathbf{A}$-graphs we consider are reflexive even when we drop the adjective.  
\begin{definition}
\label{def:A-lifting of a graph}
For any reflexive $\mathbf{A}$-graph $\X = (Z, E)$,  the formal $\mathbf{A}$-context associated with $\X$ is
\[\mathbb{P_X}: = (Z_A, Z_X, I_E),\]
where $Z_A := \mathbf{A}\times Z$ and $Z_X :=  Z$, and  $I_E: Z_A\times Z_X\to \mathbf{A}$ is defined by $I_E((\alpha, z), z') = E(z, z')\rightarrow \alpha$. We let $\X^+: = \mathbb{P_X}^+$.\end{definition}
Any  $R: Z\times Z\to \mathbf{A}$  admits the following {\em liftings}:
\begin{center}
	\begin{tabular}{r l }
$I_R:$ & $ Z_A\times Z_X \to \mathbf{A}$\\
		
		& $ ((\alpha, z), z')\mapsto R(z, z')\rightarrow \alpha$\\ 
	&\\	
$J_R:$ & $ Z_X\times Z_A \to \mathbf{A}$\\
		
		& $ (z, (\alpha, z'))\mapsto R(z, z')\rightarrow \alpha$\\ 
	\end{tabular}
\end{center}
Recall that for all $f: \mathbf{A}\times Z\to \mathbf{A}$, and $u: Z\to \mathbf{A}$, the maps\footnote{\label{footnote: abbreviations} Applying this notation to a graph $\X = (Z, E)$, we will abbreviate $E^{[0]}[u]$ and $E^{[1]}[f]$ as $u^{[0]}$ and $f^{[1]}$, respectively, for each $u, f$ as above,  
and write $u^{[01]}$ and $f^{[10]}$ for $(u^{[0]})^{[1]}$ and $(f^{[1]})^{[0]}$, respectively. Then $u^{[0]} = I_{E}^{(0)}[u] = u^{\downarrow}$ and $f^{[1]} = I_{E}^{(1)}[f] = f^{\uparrow}$, where the maps $(\cdot)^\downarrow$ and $(\cdot)^\uparrow$ are those associated with the polarity $\mathbb{P_X}$.} 
$f^{\uparrow}:  Z\to \mathbf{A}$ and $u^{\downarrow}: \mathbf{A}\times Z\to \mathbf{A}$ are respectively defined by the assignments \[ z\mapsto \bigwedge_{(\alpha, z')\in Z_A}[ f(\alpha, z')\to (E(z', z)\to\alpha)]\]  \[(\alpha, z)\mapsto \bigwedge_{z'\in Z_X}[ u(z')\to (E(z, z')\to\alpha)].\] 
\begin{definition}\label{def:graph:based:frame:and:model}
A {\em graph-based $\mathbf{A}$-frame}  is a structure $\mathbb{G} = (\mathbb{X},  R_{\Diamond}, R_{\Box})$ where $\mathbb{X} = (Z,E)$ is a reflexive $\mathbf{A}$-graph, and  $R_{\Diamond}$ and $R_{\Box}$  are binary $\mathbf{A}$-relations on $Z$ such that the structure $\mathbb{F_G}: = (\mathbb{P_X}, I_{R_\Box}, J_{R_\Diamond})$ is an enriched formal $\mathbf{A}$-context. That is, 
$R_{\Diamond}$ and $R_{\Box}$ satisfy the following  $E$-{\em compatibility} conditions: for any $z\in Z$ and $\alpha, \beta\in\mathbf{A}$,
\begin{align*}
(R_\Box^{[0]}[\{\beta /   z \}])^{[10]} &\subseteq R_\Box^{[0]}[\{\beta /  z \}] \\
(R_\Box^{[1]}[\{\beta /  (\alpha, z) \}])^{[01]} &\subseteq R_\Box^{[1]}[\{\beta / (\alpha, z) \}]\\
(R_\Diamond^{[1]}[\{\beta /  z\}])^{[10]} &\subseteq R_\Diamond^{[1]}[\{\beta /   z \}] \\
(R_\Diamond^{[0]}[\{\beta /  (\alpha, z) \}])^{[01]} &\subseteq R_\Diamond^{[0]}[\{\beta / (\alpha, z) \}].
\end{align*}
where for all $f: \mathbf{A}\times Z\to \mathbf{A}$ and $u: Z\to \mathbf{A}$,
\begin{center}
	\begin{tabular}{ll}
$u^{[0]} = E^{[0]}[u]:$ & $\mathbf{A}\times Z\to \mathbf{A}$\\
& $(\alpha, z)\mapsto I_E^{(0)}[u](\alpha, z) = u^{\downarrow}(\alpha, z)$\\
&\\
$f^{[1]} = E^{[1]}[f]:$ & $Z\to \mathbf{A}$\\
& $z\mapsto I_E^{(1)}[f](z) = f^{\uparrow}(z)$\\
\end{tabular}
	\end{center}
\begin{center}
	\begin{tabular}{ll}
$R_{\Box}^{[0]}[u]:$ & $\mathbf{A}\times Z\to \mathbf{A}$\\
& $(\alpha, z)\mapsto I_{R_{\Box}}^{(0)}[u](\alpha, z)$\\
&\\
$R_{\Box}^{[1]}[f]:$ & $ Z\to \mathbf{A}$\\
& $z\mapsto I_{R_\Box}^{(1)}[f](z)$\\
\end{tabular}
	\end{center}
\begin{center}
	\begin{tabular}{ll}
$R_{\Diamond}^{[0]}[f]:$ & $Z\to \mathbf{A}$\\
& $z\mapsto  J_{R_\Diamond}^{(0)}[f]( z)$\\
&\\	
$R_{\Diamond}^{[1]}[u]:$ & $\mathbf{A}\times Z\to \mathbf{A}$\\
& $(\alpha, z)\mapsto  J_{R_\Diamond}^{(1)}[u](\alpha, z)$.\\
\end{tabular}
	\end{center}
	Hence, for any $z\in Z$ and $\alpha\in \mathbf{A}$,
	\begin{center}
	\begin{tabular}{l}
	$E^{[0]}[u](\alpha, z): = \bigwedge_{z'\in Z_X}[u(z')\to (E(z, z') \to \alpha)]$\\
	$E^{[1]}[f](z): = \bigwedge_{(\alpha, z')\in Z_A}[f(\alpha, z')\to  (E(z', z) \to \alpha)]$.\\

	$R_{\Box}^{[0]}[u](\alpha, z): = \bigwedge_{z'\in Z_X}[u( z')\to (R_{\Box}(z, z') \to \alpha)]$\\
	$R_{\Box}^{[1]}[f](z): = \bigwedge_{(\alpha, z')\in Z_A}[f(\alpha, z')\to  (R_{\Box}(z', z) \to \alpha)]$\\
		$R_{\Diamond}^{[0]}[f](z): = \bigwedge_{(\alpha, z')\in Z_A}[f(\alpha, z')\to  (R_{\Diamond}(z, z') \to \alpha)]$\\
	$R_{\Diamond}^{[1]}[u](\alpha, z): = \bigwedge_{z'\in Z_X}[u(z')\to (R_{\Diamond}(z', z) \to \alpha)]$.\\

	\end{tabular}
	\end{center}
	
The {\em complex algebra} of a graph-based $\mathbf{A}$-frame $\mathbb{G}= (\mathbb{X},  R_{\Diamond}, R_{\Box})$ is the algebra $\mathbb{G}^+ = (\mathbb{X}^+, [R_\Box], \langle R_\Diamond\rangle),$
	where $\mathbb{X}^+: = \mathbb{P_X}^+$, and $[R_\Box]$ and $\langle R_\Diamond\rangle$ are unary operations on $\mathbb{X}^+$ defined as follows: for every $c = (\val{c}, \descr{c}) \in \mathbb{X}^+$,
	\begin{center}
	\begin{tabular}{l cl}
	$[R_\Box]c$ & $ =$&$   (R_{\Box}^{[0]}[\descr{c}], (R_{\Box}^{[0]}[\descr{c}])^{[1]})$\\
	 $ \langle R_\Diamond\rangle c$ & $ =$&$  ((R_{\Diamond}^{[0]}[\val{c}])^{[0]}, R_{\Diamond}^{[0]}[\val{c}])$.\\
\end{tabular}
	\end{center}
 \end{definition}
 By definition, it immediately follows that
 \begin{lemma}
If  $\mathbb{G}$ is a  graph-based $\mathbf{A}$-frame,  $\mathbb{G}^+ = \mathbb{F_G}^+$.
 \end{lemma}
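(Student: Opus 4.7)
The plan is to verify this by directly unfolding the definitions on both sides and observing that the notation has been set up so that the two algebras literally coincide. First, the carrier lattices agree: $\mathbb{G}^+$ is built on $\mathbb{X}^+$, which by Definition \ref{def:A-lifting of a graph} is $\mathbb{P_X}^+$, and $\mathbb{F_G}^+$ is by definition the complex algebra of the enriched context $\mathbb{F_G} = (\mathbb{P_X}, I_{R_\Box}, J_{R_\Diamond})$, so it is also built on $\mathbb{P_X}^+$. Hence the $\mathbf{A}$-concepts over which the two algebras are defined are identical, and the underlying order is the same.

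Next, I would check that the four ``bracket'' operations on functions agree with the corresponding Galois/polarity maps. For the polarity maps, footnote \ref{footnote: abbreviations} already records that $u^{[0]} = I_E^{(0)}[u] = u^{\downarrow}$ and $f^{[1]} = I_E^{(1)}[f] = f^{\uparrow}$. For the modal relation liftings, comparing the explicit formulas
\[R_\Box^{[0]}[u](\alpha, z) = \bigwedge_{z' \in Z_X}\bigl[u(z') \to (R_\Box(z,z') \to \alpha)\bigr]\]
with the defining assignment $I_{R_\Box}((\alpha, z), z') = R_\Box(z, z') \to \alpha$ of the lifting shows that $R_\Box^{[0]}[u] = I_{R_\Box}^{(0)}[u]$; analogously, from $J_{R_\Diamond}(z, (\alpha, z')) = R_\Diamond(z, z') \to \alpha$ one reads off $R_\Diamond^{[0]}[f] = J_{R_\Diamond}^{(0)}[f]$. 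These are routine identities, requiring only the pointwise definitions of $(\cdot)^{(0)}$ and $(\cdot)^{(1)}$ for an $\mathbf{A}$-relation.

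Finally, substituting the equalities above into the defining formulas
\[[R_\Box]c = \bigl(R_\Box^{[0]}[\descr{c}],\, (R_\Box^{[0]}[\descr{c}])^{[1]}\bigr), \qquad \langle R_\Diamond \rangle c = \bigl((R_\Diamond^{[0]}[\val{c}])^{[0]},\, R_\Diamond^{[0]}[\val{c}]\bigr)\]
of the operations of $\mathbb{G}^+$ yields precisely
\[\bigl(I_{R_\Box}^{(0)}[\descr{c}],\, (I_{R_\Box}^{(0)}[\descr{c}])^{\uparrow}\bigr), \qquad \bigl((J_{R_\Diamond}^{(0)}[\val{c}])^{\downarrow},\, J_{R_\Diamond}^{(0)}[\val{c}]\bigr),\]
which are by definition the operations $[I_{R_\Box}]$ and $\langle J_{R_\Diamond}\rangle$ of $\mathbb{F_G}^+$. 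Hence $\mathbb{G}^+ = \mathbb{F_G}^+$.

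There is no real obstacle beyond bookkeeping: the only care needed is to keep track of the three parallel notations (the $(\cdot)^{(0)}/(\cdot)^{(1)}$ induced by an arbitrary $\mathbf{A}$-relation, the Galois maps $(\cdot)^{\uparrow}/(\cdot)^{\downarrow}$ of the polarity $\mathbb{P_X}$, and the graph-based abbreviations $(\cdot)^{[0]}/(\cdot)^{[1]}$), and to check that the types of the functions being compared agree so that the equalities above make sense. Stability of the relevant images, needed for the pairs to be genuine concepts, is guaranteed either directly by the $E$-compatibility conditions in Definition \ref{def:graph:based:frame:and:model} (which is precisely the $I$-compatibility of $I_{R_\Box}$ and $J_{R_\Diamond}$) or by Lemma \ref{prop:fplus} applied to $\mathbb{F_G}$.
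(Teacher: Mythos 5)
Your proposal is correct and matches the paper's treatment: the paper states this lemma as following immediately by definition, and your argument is exactly that unfolding — the carriers coincide because $\mathbb{X}^+ := \mathbb{P_X}^+$, the bracket maps $R_\Box^{[0]}, R_\Box^{[1]}, R_\Diamond^{[0]}, R_\Diamond^{[1]}$ are defined in Definition \ref{def:graph:based:frame:and:model} precisely as $I_{R_\Box}^{(0)}, I_{R_\Box}^{(1)}, J_{R_\Diamond}^{(0)}, J_{R_\Diamond}^{(1)}$, and substituting these into the defining clauses for $[R_\Box]$ and $\langle R_\Diamond\rangle$ gives the operations of $\mathbb{F_G}^+$. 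Nothing is missing; the care you take with the three parallel notations and with stability via $E$-compatibility is exactly the bookkeeping the paper leaves implicit.
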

 Hence, by the lemma above and Lemma \ref{prop:fplus}, 
\begin{lemma}
If $\mathbb{G} = (\mathbb{X}, R_{\Box}, R_{\Diamond})$	is a  graph-based $\mathbf{A}$-frame, $\mathbb{G}^+ = (\mathbb{X}^+, [R_{\Box}], \langle R_{\Diamond}\rangle)$ is a complete  normal lattice expansion such that $[R_\Box]$ is completely meet-preserving and $\langle R_\Diamond\rangle$ is completely join-preserving.
\end{lemma}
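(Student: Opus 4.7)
The plan is a short chaining of the two preceding results: the heavy work is already absorbed into the definition of a graph-based $\mathbf{A}$-frame and into Lemma \ref{prop:fplus}, so the proof reduces to transporting the conclusion of Lemma \ref{prop:fplus} along the identification $\mathbb{G}^+ = \mathbb{F_G}^+$ provided by the lemma immediately above.

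First, I would verify, or rather unpack, that $\mathbb{F_G} = (\mathbb{P_X}, I_{R_\Box}, J_{R_\Diamond})$ is an enriched formal $\mathbf{A}$-context in the sense of Section 2.2. But this is built into Definition \ref{def:graph:based:frame:and:model}: under the notational conventions of footnote \ref{footnote: abbreviations}, one has $R_\Box^{[0]} = I_{R_\Box}^{(0)}$ and $R_\Box^{[1]} = I_{R_\Box}^{(1)}$, and analogously $R_\Diamond^{[0]} = J_{R_\Diamond}^{(0)}$ and $R_\Diamond^{[1]} = J_{R_\Diamond}^{(1)}$, so the $E$-compatibility conditions on $R_\Box, R_\Diamond$ listed in that definition translate literally into the $I_E$-compatibility required of $I_{R_\Box}$ and $J_{R_\Diamond}$ with respect to the polarity $\mathbb{P_X} = (Z_A, Z_X, I_E)$ of Definition \ref{def:A-lifting of a graph}.

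Next, I would invoke the lemma immediately preceding the statement, which identifies $\mathbb{G}^+ = \mathbb{F_G}^+$. The carriers agree by Definition \ref{def:A-lifting of a graph}, and the modal operations agree on every concept $c = (\val{c}, \descr{c})$: the formulas defining $[R_\Box]c$ and $\langle R_\Diamond\rangle c$ in Definition \ref{def:graph:based:frame:and:model} coincide, via the same notational identifications, with the formulas defining the modal operations of the complex algebra of $\mathbb{F_G}$ as an enriched formal $\mathbf{A}$-context. Applying Lemma \ref{prop:fplus} to $\mathbb{F_G}$ then yields that $\mathbb{F_G}^+$ is a complete normal lattice expansion in which $[R_\Box]$ is completely meet-preserving and $\langle R_\Diamond\rangle$ completely join-preserving, and transporting this conclusion along $\mathbb{G}^+ = \mathbb{F_G}^+$ gives the claim.

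I do not expect any genuine obstacle. The only step requiring real attention is the bookkeeping match between the two notational systems, namely the graph-based lifting operators $(\cdot)^{[0]}, (\cdot)^{[1]}$ on $\mathbf{A}$-subsets of $Z_X$ and $Z_A$ versus the polarity operators $(\cdot)^{(0)}, (\cdot)^{(1)}$ arising from $I_E$, $I_{R_\Box}$, $J_{R_\Diamond}$; this is essentially stipulated by the two definitions and requires no nontrivial computation.
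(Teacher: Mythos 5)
Your proposal is correct and follows exactly the paper's own route: the paper derives this lemma by combining the immediately preceding identification $\mathbb{G}^+ = \mathbb{F_G}^+$ with Lemma \ref{prop:fplus} applied to the enriched formal $\mathbf{A}$-context $\mathbb{F_G}$, whose required compatibility conditions are built into Definition \ref{def:graph:based:frame:and:model}. The only work is the notational bookkeeping between $(\cdot)^{[i]}$ and $(\cdot)^{(i)}$, just as you describe.
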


The following lemma is an immediate consequence of \cite[Lemma 14]{roughconcepts} applied to $\mathbb{F_G}$.

\begin{lemma}\label{equivalence of I-compatible-mv}
For every graph-based $\mathbf{A}$-graph $\mathbb{G} = (\mathbb{X}, R_{\Box}, R_{\Diamond})$,
\begin{enumerate}
\item the following are equivalent:	
\begin{enumerate}
		\item [(i)] $(R_{\Box}^{[0]}[\{\alpha/z\}])^{[10]}\subseteq R_{\Box}^{[0]}[\{\alpha/z\}]$  for every $z\in Z$ and $\alpha\in\mathbf{A}$;
		
		\item [(ii)]  $(R_{\Box}^{[0]}[u])^{[10]}\subseteq R_{\Box}^{[0]}[u]$  for every $u: Z_X\to\mathbf{A}$;
		\item [(iii)] $R_{\Box}^{[1]}[f^{[10]}]\subseteq R_{\Box}^{[1]}[f]$
		for every  $f:Z_A\to\mathbf{A}$.
	\end{enumerate}
\item the following are equivalent:	
\begin{enumerate}
		\item [(i)] $(R_{\Box}^{[1]}[\{\alpha/(\beta, z)\}])^{[01]}\subseteq R_{\Box}^{[1]}[\{\alpha/(\beta, z)\}]$  for every $z\in Z$ and $\alpha, \beta\in\mathbf{A}$;
		
		\item [(ii)]  $(R_{\Box}^{[1]}[f])^{[01]}\subseteq R_{\Box}^{[1]}[f]$  for every $f: Z_A\to\mathbf{A}$;
		\item [(iii)] $R_{\Box}^{[0]}[u^{[01]}]\subseteq R_{\Box}^{[0]}[u]$
		for every  $u:Z_X\to\mathbf{A}$.
	\end{enumerate}
\item the following are equivalent:	
\begin{enumerate}
		\item [(i)] $(R_{\Diamond}^{[0]}[\{\alpha/(\beta, z)\}])^{[01]}\subseteq R_{\Diamond}^{[0]}[\{\alpha/(\beta, z)\}]$  for every $z\in Z$ and $\alpha, \beta\in\mathbf{A}$;
		
		\item [(ii)]  $(R_{\Diamond}^{[0]}[f])^{[01]}\subseteq R_{\Diamond}^{[0]}[f]$  for every $f: Z_A\to\mathbf{A}$;
		\item [(iii)] $R_{\Diamond}^{[1]}[u^{[01]}]\subseteq R_{\Diamond}^{[1]}[u]$
		for every  $u:Z_X\to\mathbf{A}$.
	\end{enumerate}	
	
\item the following are equivalent :	
\begin{enumerate}
		\item [(i)] $(R_{\Diamond}^{[1]}[\{\alpha/z\}])^{[10]}\subseteq R_{\Diamond}^{[1]}[\{\alpha/z\}]$  for every $z\in Z$ and $\alpha\in\mathbf{A}$;
		
		\item [(ii)]  $(R_{\Diamond}^{[1]}[u])^{[10]}\subseteq R_{\Diamond}^{[1]}[u]$  for every $u: Z_X\to\mathbf{A}$;
		\item [(iii)] $R_{\Diamond}^{[0]}[f^{[10]}]\subseteq R_{\Diamond}^{[0]}[f]$
		for every  $f:Z_A\to\mathbf{A}$.
	\end{enumerate}	
\end{enumerate}
\end{lemma}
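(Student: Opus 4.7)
The plan is to reduce each of the four equivalences to the corresponding statement of \cite[Lemma 14]{roughconcepts} applied to the enriched formal $\mathbf{A}$-context $\mathbb{F_G} = (\mathbb{P_X}, I_{R_\Box}, J_{R_\Diamond})$ associated with $\mathbb{G}$. Recall that in the polarity setting, \cite[Lemma 14]{roughconcepts} asserts that, for each of the four $I$-compatibility conditions defining an enriched formal $\mathbf{A}$-context, three equivalent formulations are available: stability on singleton maps $\{\alpha/z\}$ (or $\{\alpha/(\beta,z)\}$), stability on arbitrary $\mathbf{A}$-subsets, and a ``conjugate'' formulation shifting the stability closure to the input rather than to the output of the operator. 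Thus, once the operators in the graph-based setting are identified with the corresponding operators in $\mathbb{F_G}$, the four equivalences become direct instances of that lemma.

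First, I would unfold the abbreviations of Footnote~\ref{footnote: abbreviations} and of Definition~\ref{def:graph:based:frame:and:model}: for every $u:Z_X\to\mathbf{A}$ and $f:Z_A\to\mathbf{A}$,
\[
u^{[0]} = I_E^{(0)}[u] = u^{\downarrow}, \qquad f^{[1]} = I_E^{(1)}[f] = f^{\uparrow},
\]
so that $(\cdot)^{[01]}$ and $(\cdot)^{[10]}$ are nothing but the Galois closure operators $(\cdot)^{\downarrow\uparrow}$ and $(\cdot)^{\uparrow\downarrow}$ of the polarity $\mathbb{P_X}$. Similarly, by the definitions spelled out after Definition~\ref{def:graph:based:frame:and:model}, the maps $R_\Box^{[0]}, R_\Box^{[1]}$ coincide with $I_{R_\Box}^{(0)}, I_{R_\Box}^{(1)}$, and $R_\Diamond^{[0]}, R_\Diamond^{[1]}$ coincide with $J_{R_\Diamond}^{(0)}, J_{R_\Diamond}^{(1)}$, respectively. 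Under this dictionary, the conditions (i)--(iii) in each of the four items of the present lemma translate verbatim into the three equivalent formulations of the corresponding $I$-compatibility condition for $\mathbb{F_G}$ as stated in \cite[Lemma 14]{roughconcepts}.

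For item (1), this invocation yields the equivalence of the conditions on $R_\Box^{[0]}$: the singleton form (i) is the condition characterizing $I$-compatibility of $I_{R_\Box}$ on $\{\alpha/z\}\in\mathbf{A}^{Z_X}$, (ii) is its extension to all $u\in\mathbf{A}^{Z_X}$ by expanding along \eqref{eq:MV:join:generators} and using that $R_\Box^{[0]}$ turns joins into meets, and (iii) is the ``adjoint'' reformulation obtained from $S_{Z_A}(f, R_\Box^{[0]}[u]) = S_{Z_X}(u, R_\Box^{[1]}[f])$. Items (2), (3), (4) are handled by the same argument, with $R_\Box^{[0]}$ replaced by $R_\Box^{[1]}$, $R_\Diamond^{[0]}$, $R_\Diamond^{[1]}$, and with the roles of $Z_A$ and $Z_X$ and of $(\cdot)^{[01]}$ vs.\ $(\cdot)^{[10]}$ swapped accordingly.

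The only possible obstacle is purely bookkeeping: making sure that the $I$-compatibility conditions on $\mathbb{F_G}$ involve $I_{R_\Box}$ on the ``$\Box$-side'' and $J_{R_\Diamond}$ (the typewise-swapped lifting) on the ``$\Diamond$-side'', so that in items (3) and (4) the domain/codomain types of the singleton maps are $(\beta,z)\in Z_A$ and $z\in Z_X$ reversed with respect to items (1) and (2). Once this is checked against the definitions of $I_R$ and $J_R$, no further computation is needed and the lemma follows immediately.
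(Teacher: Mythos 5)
Your proposal is correct and matches the paper's approach exactly: the paper gives no separate proof, stating only that the lemma ``is an immediate consequence of \cite[Lemma 14]{roughconcepts} applied to $\mathbb{F_G}$'', which is precisely the reduction you carry out. Your additional unfolding of the dictionary between $R^{[0]},R^{[1]}$ and $I_{R_\Box}^{(0)},I_{R_\Box}^{(1)},J_{R_\Diamond}^{(0)},J_{R_\Diamond}^{(1)}$, and your attention to the swapped types on the $\Diamond$-side, simply makes explicit the bookkeeping the paper leaves implicit.
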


\section{Many-valued graph-based models}
Let $\mathcal{L}$ be the language of Section \ref{sec:logics}. 
\begin{definition}
A {\em graph-based}  $\mathbf{A}$-{\em model} of $\mathcal{L}$  is a tuple $\mathbb{M} = (\mathbb{G}, V)$ such that $\mathbb{G} = (\mathbb{X}, R_\Box, R_\Diamond)$ is a graph-based $\mathbf{A}$-frame and $V: \mathcal{L}\to \mathbb{G}^+$ is a homomorphism. For every $\phi\in \mathcal{L}$, let $V(\phi): = (\val{\phi}, \descr{\phi})$, where $\val{\phi}: Z_A\to \mathbf{A}$ and $\descr{\phi}: Z_X\to \mathbf{A}$ are s.t.~$\val{\phi}^{[1]} = \descr{\phi}$ and $\descr{\phi}^{[0]} = \val{\phi}$. Hence:
%
\begin{center}
\begin{tabular}{r c l}
$V(p)$ & = & $(\val{p}, \descr{p})$\\
$V(\top)$ & = & $(1^{\mathbf{A}^{Z_A}}, (1^{\mathbf{A}^{Z_A}})^{[1]})$\\
$V(\bot)$ & = & $((1^{\mathbf{A}^{Z_X}})^{[0]}, 1^{\mathbf{A}^{Z_X}})$\\
$V(\phi\wedge \psi)$ & = & $(\val{\phi}\wedge\val{\psi}, (\val{\phi}\wedge\val{\psi})^{[1]})$\\
$V(\phi\vee \psi)$ & = & $((\descr{\phi}\wedge\descr{\psi})^{[0]}, \descr{\phi}\wedge\descr{\psi})$\\
$V(\Box\phi)$ & = & $(R^{[0]}_\Box[\descr{\phi}], (R^{[0]}_\Box[\descr{\phi}])^{[1]})$\\
$V(\Diamond\phi)$ & = & $((R^{[0]}_\Diamond[\val{\phi}])^{[0]}, R^{[0]}_\Diamond[\val{\phi}])$.\\
\end{tabular}
\end{center}
Valuations induce  $\alpha$-{\em support relations} between value-state pairs and formulas for each $\alpha\in \mathbf{A}$ (in symbols: $\mathbb{M}, (\beta, z)\Vdash^\alpha \phi$), and $\alpha$-{\em refutation relations} between states of models and formulas for each $\alpha\in \mathbf{A}$ (in symbols: $\mathbb{M},  z\succ^\alpha \phi$) such that for every $\phi\in \mathcal{L}$, all $z\in Z$and all $\beta\in \mathbf{A}$,
\begin{center}
	\begin{tabular}{r c l}
$\mathbb{M}, (\beta, z)\Vdash^\alpha \phi$ & iff  &$ \alpha\leq \val{\phi}(\beta, z),$\\
$\mathbb{M},z \succ^\alpha \phi $ & iff  &$ \alpha\leq \descr{\phi}( z).$\\
\end{tabular}
\end{center}
This can be equivalently expressed as follows:
{\small{
\begin{center}
\begin{tabular}{l c l}
$\mathbb{M}, (\beta, z)\Vdash^\alpha p$ &iff& $\alpha\leq \val{p}(\beta, z)$;\\
$\mathbb{M}, (\beta, z)\Vdash^\alpha \top$ &iff& $\alpha\leq 1^{\mathbf{A}^{Z_A}}(\beta, z)$ i.e.~always;\\
$\mathbb{M}, (\beta, z)\Vdash^\alpha \bot$ &iff& $\alpha\leq (1^{\mathbf{A}^{Z_X}})^{[0]} (\beta, z)$\\
&& $ =\bigwedge_{z'\in Z_X}[1^{\mathbf{A}^{Z_X}}(z')\to (E(z, z') \to \beta)]$\\
&& $ =\bigwedge_{z'\in Z_X}[E(z, z') \to \beta]$\\
&& $ =\beta$;\\
$\mathbb{M}, (\beta, z)\Vdash^\alpha \phi\wedge \psi$ &iff& $\mathbb{M}, (\beta, z)\Vdash^\alpha \phi\, $ and $\, \mathbb{M}, (\beta, z)\Vdash^\alpha \psi$;\\
$\mathbb{M}, (\beta, z)\Vdash^\alpha \phi\vee \psi$ &iff& $\alpha\leq (\descr{\phi}\wedge\descr{\psi})^{[0]}(\beta, z)$\\
&& $ = \bigwedge_{z'\in Z_X}[(\descr{\phi}\wedge\descr{\psi})(z')$\\ 
&&$\quad \to (E(z, z') \to \beta)]$;\\
$\mathbb{M}, (\beta, z)\Vdash^\alpha \Box \phi$ &iff& $\alpha\leq (R^{[0]}_\Box[\descr{\phi}])(\beta, z)$\\
&& $ = \bigwedge_{ z'\in Z_X}[\descr{\phi}(z')\to (R_{\Box}(z, z')$\\
&&$\quad \to \beta)]$;\\
$\mathbb{M}, (\beta, z)\Vdash^\alpha \Diamond \phi$ &iff& $\alpha\leq ((R^{[0]}_\Diamond[\val{\phi}])^{[0]})(\beta, z)$\\
&&$ = \bigwedge_{z'\in Z_X}[R^{[0]}_\Diamond[\val{\phi}](z')\to (E(z, z')$\\
&&$\quad \to \beta)]$;\\
\end{tabular}
\end{center}

\begin{center}
\begin{tabular}{l c l}
$\mathbb{M}, z\succ^\alpha p$ & iff & $\alpha\leq \descr{p}( z)$;\\
$\mathbb{M}, z\succ^\alpha \bot$ & iff & $\alpha\leq 1^{\mathbf{A}^{Z_X}}( z)$ i.e.~always;\\
$\mathbb{M},  z\succ^\alpha \top$ & iff & $\alpha\leq (1^{\mathbf{A}^{Z_A}})^{[1]} ( z) $\\
&&$ = \bigwedge_{(\beta, z')\in Z_A}[1(\beta, z')\to  (E(z', z) \to \beta)]$\\
&&$ = \bigwedge_{(\beta, z')\in Z_A}[E(z', z) \to \beta]$\\
&& $ =\beta$;\\
$\mathbb{M},  z\succ^\alpha \phi\vee \psi$ & iff & $\mathbb{M},  z\succ^\alpha \phi$ and $\mathbb{M},  z\succ^\alpha \psi$;\\
$\mathbb{M}, z\succ^\alpha \phi\wedge \psi$ & iff & $\alpha\leq (\val{\phi}\wedge\val{\psi})^{[1]}(z)$\\
&&$ = \bigwedge_{(\beta, z')\in Z_A}[(\val{\phi}\wedge\val{\psi})(\beta, z')$\\
&&$\quad \to  (E(z', z) \to \beta)]$;\\

$\mathbb{M}, z\succ^\alpha \Diamond \phi$ & iff & $\alpha\leq (R^{[0]}_\Diamond[\val{\phi}])(z)$\\
&& $ = \bigwedge_{ (\beta, z')\in Z_A}[\val{\phi}(\beta, z')$\\
&&$\quad \to (R_{\Diamond}(z, z') \to \beta)]$;\\

$\mathbb{M},  z\succ^\alpha \Box \phi$ & iff & $\alpha\leq ((R^{[0]}_\Box[\descr{\phi}])^{[1]})(z)$\\
&&$ = \bigwedge_{(\beta, z')\in Z_A}[R^{[0]}_\Box[\descr{\phi}](\beta, z')$\\ 
&&$\quad \to  (E(z', z) \to \beta)]$.\\
\end{tabular}
\end{center}
}}
\end{definition}

\begin{definition}\label{def:Sequent:True:In:Model}
	A sequent $\phi \vdash \psi$ is \emph{true in a model} $\mathbb{M} = (\mathbb{G}, V)$ (notation:  $\mathbb{M} \models \phi \vdash \psi$) if $\val{\phi}\subseteq \val{\psi}$, or equivalently, if $\descr{\psi}\subseteq \descr{\phi}$.
A sequent $\phi \vdash \psi$ is \emph{valid on a graph-based frame} $\mathbb{G}$ (notation:  $\mathbb{G} \models \phi \vdash \psi$) if $\phi \vdash \psi$ is true in every model $\mathbb{M} = (\mathbb{G}, V)$ based on $\mathbb{G}$.
\end{definition}
\begin{remark}\label{remark:Monotone:Vals}
It is not difficult to see that for all stable valuations, if $p\in \Prop$ and $\beta, \beta'\in \mathbf{A}$ such that $\beta\leq \beta'$, then $\val{p}(\beta, z) \leq \val{p}(\beta', z)$ for every $z\in Z$, and one can readily verify that this condition extends compositionally to every $\phi\in \mathcal{L}$. 
\end{remark}

Before moving on to the case study, let us expand on how to understand informally  the notion of $\alpha$-support at value-state pairs. To this end, it is perhaps useful to start analysing  $\mathbb{M}, (\beta, z)\Vdash^\alpha \bot$. By definition, this is the case iff  $ \alpha\leq\beta$. Hence, the role of $\beta$ in the pair $(\beta, z)$ is to indicate  the maximum extent $\alpha$ to which the `state' $(\beta, z)$  is allowed to $\alpha$-support a false statement. Equivalently, $(\beta, z)$ does not $\alpha$-support the falsehood for any $\alpha\not\leq \beta$. Hence, when $\mathbf{A}$ is linearly ordered,  $\beta$ indicates the `threshold'  beyond which (i.e.~overcoming which by going up) $\alpha$-support becomes meaningful at the given `state' $(\beta, z)$.  These observations open the way to the possibility of imposing extra conditions on the extension functions $\val{\phi}: \mathbf{A}\times Z\to \mathbf{A}$, depending on the given situation to be modelled. These extra conditions  are not required by the general semantic environment, but are accommodated by it.  For instance, if considering this threshold $\beta$ is not relevant to the given case at hand, then one can choose to restrict oneself to valuations such that, if $p\in \Prop$, then $\val{p}(\beta, z) = \val{p}(\beta', z)$ for every $z\in Z$ and every $\beta, \beta'\in \mathbf{A}$.  One can readily verify that this condition extends compositionally to every $\phi\in \mathcal{L}$.

\section{Case study: competing theories}\label{sec:Case:Study}
In the previous sections, we have illustrated how many-valued semantics for modal logic \cite{fitting1991many,bou2011minimum} can be generalized from  Kripke frames to graph-based structures. This generalization is  the parametric version  (where $\mathbf{A}$ is the parameter)  of the graph-based semantics of \cite{graph-based-wollic}, and  the main motivation for introducing it is that it allows for a rich description of certain essentials (in the present case, of the role of theories in the practice of empirical sciences), while still using basic intuitions from the crisp setting. For instance, in the many-valued setting, the basic intuition still holds that the generalization from classical Kripke frame semantics to graph-based semantics consists in thinking of the graph-relation $E$ as encoding an inherent boundary to knowability (referred to as {\em informational entropy}) which disappears in the classical setting (in which $E$ coincides with the identity relation).  Informational entropy can be due to many factors (e.g.~technological, theoretical, linguistic, perceptual, cognitive), and in \cite{graph-based-wollic}, examples are discussed in which the nature of  these limits is perceptual and linguistic.  In the present section, we discuss how  the {\em theoretical frameworks} adopted by empirical scientists can be  a source of informational entropy. 

For the purpose of this analysis, we consider graph-based structures   $(Z, E, \{R_{X_i}\mid X_i\subseteq \mathsf{Var}, 0\leq i\leq n\})$ in which $\mathbb{X} = (Z, E)$ represents a network of databases, and $\mathsf{Var}$ is a set of variables which includes the variables structuring the information contained in the databases of $Z$. In this context, $\mathcal{L}$-formulas can be thought of as hypotheses which will be assigned truth values (more specifically, truth-degrees) at  value-state pairs of models based on these frames. We will refer to any such  pair $(\beta, z)$ as a {\em situation},  the $\beta$ component of which is understood as the maximum degree of flexibility in operationalizing variables in that given situation.  This truth value assignment of a formula (hypothesis) at a value-state pair (situation)  is then intended to represent  the significance of the correlation posited by the hypothesis, when tested in the given database according to the degree of flexibility allowed at that situation,  with higher truth values   indicating higher levels of significance.\footnote{However, in this paper we do not intend to set up a systematic correlation between significance levels and truth values. The values chosen in the example below are only supposed to be intuitively plausible.} This interpretation is coherent with the property mentioned in Remark \ref{remark:Monotone:Vals}: indeed, the higher the flexibility in operationalizing variables, the more leeway to obtain a higher $\alpha$-support of hypotheses at situations.

Moreover, in the context of the graph-based structures above, an empirical theory is characterized by (and here identified with) a certain subset $X$ of variables which are {\em relevant} to the given theory; also, in what follows, for all databases $z_j\in Z$, we let $X_j$ denote the set of variables structuring the data contained in $z_j$.\footnote{It is interesting to notice that this basic environment naturally captures the idea that evidence is laden with theory: that is, we can think of each database $z_j$ as being constructed on the basis of the theory corresponding to set of variables $X_j$ associated with $z_j$.} Hence,
the  $\mathbf{A}$-relation $E$ encodes to what extent database $z_2$ is similar to $z_1$ (e.g.~by letting $E(z_1, z_2)$ record the percentage of variables of $z_1$ that also occur in $z_2$), while the relations $R_X$  encode to what extent one database is similar to another, relative to $X$ (e.g.~by letting $R_X(z_1, z_2)$ record the percentage of variables of $X_1\cap X$ that also occur in $X_2$). Below, we give a more concrete illustration of this environment by means of an example about {\em dietary theories}.  

The first theory, known since antiquity, is that body-fat loss of individuals depends on what they ate and how much exercise they did. We refer to it as the {\em ancient theory} ($A$),  on the basis of which Aretaeus the Cappadocian might have created a database $z_A$ recording how many {\em kochliaria} of olive oil and of honey, how many {\em minas} of bread, of olives, of lamb, and how many  {\em kyathoi} of wine a group of athletes and a group of rhetoric students ate each day, and how many  {\em stadia} they walked or ran each day, and how many {\em minas} each individual weighed each day. 

The second one, the {\em modern theory} ($M$), was developed in Victorian times by Wilbur Olin Atwater.\footnote{Source: \texttt{https://www.sciencehistory.org/\\
distillations/magazine/counting-calories}} In line with the Taylorist view on labour efficiency, the modern theory explains body-fat loss in terms of a negative balance between the daily caloric intake of individuals provided by food and their daily caloric expenditure, due e.g.~to maintaining body temperature or to exercise. The modern theory improves over the ancient in that it provides a common ground of commensurability, which was absent in the ancient theory,  between the variables relative to food intake and those relative to exercise, by reducing all of them to their energetic import, measured in calories.  Hence, an imaginary database $z_M$ built by Atwater on the basis of this theory would record how many calories individuals got from food and how many calories they spent per day, and their mass in kilograms measured each day.   


The third theory, referred to as the {\em hormonal response theory} ($H$),\footnote{Source: \texttt{https://idmprogram.com/\\
can-make-thininsulin-hormonal-obesity-v/}}
postulates that body-fat loss is governed by a hormone, insulin, which is released in response to the intake of certain macronutrients: namely, it is maximally released in response to intake of carbohydrates, less so but still significantly released in response to  protein intake, while fat intake does not trigger any significant insulin response.\footnote{Proviso:  for the purpose of keeping this example simple, we are  oversimplifying the hormonal response theory.} This theory posits that as long as insulin values are high, the body cannot access its own fat and use it as energy source,  no matter how severe the caloric restriction. An imaginary database $z_H$ built by  Banting and Best (who famously discovered insulin and its function) on the basis of this theory would  record how many calories {\em from carbohydrates}, how many calories {\em from proteins}, how many calories {\em from fat} individuals  got from food, how many calories they spent per day, and their mass in kilograms measured each day. 

This scenario can be modelled as the graph-based $\mathbf{A}$-frame $(Z, E, R_A, R_M, R_H)$, where $\mathbf{A}$ is the 11-element {\L}ukasiewicz chain,  $Z: = \{z_A, z_M, z_H\}$, and $E: Z\times Z\to \mathbf{A}$ is as indicated in the following diagram:

	\begin{center}
		\begin{tikzpicture}[scale=1.5]
		\node (ancient)    at (-2,0)    {$z_A$};
		\node (modern)   at (0,0)    {$z_M$};
		\node (hormonal)    at (2,0)    {$z_H$};
		\draw[->] (modern) -- (ancient) node [midway,below]{1};
		\draw[->] (modern) -- (hormonal) node [midway,below]{1};
		\draw[->, bend left = 25]  (ancient) to node [midway,above]{$0.2$} (modern);
		\draw[->, bend right = 25]  (hormonal) to node [midway,above]{$0.4$} (modern);
		\draw[->,   loop below, min distance=5mm]  (modern) to node [midway,below]{$1$} (modern);
		\draw[->,  loop left, min distance=5mm]  (ancient) to node [midway,above]{$1$} (ancient);
		\draw[->,  loop right, min distance=5mm]  (hormonal) to node [midway,above]{$1$} (hormonal);
		\draw[->,  bend left=50] (ancient) to node [midway,above]{$0.6$} (hormonal);
		\draw[->,  bend left=50] (hormonal) to node [midway,below]{$1$} (ancient);		
		\end{tikzpicture}	
	\end{center} 
For any arrow in the diagram above, its value (i.e.~the similarity degree of the target to the source)  intuitively represents to which extent the target database provides information about variables relevant to the theory according to which the source database has been built. This similarity relation is reflexive by definition. Moreover, all arrows have non-zero values because  the three databases constructed on the basis of the three different theories have a minimal common ground, namely they all include the weight of individuals (which, as we will see, is the dependent variable in the hypotheses tested on them). 
We assign value 1 to all the arrows which have $z_M$ as source or $z_A$ as target, since the information relevant to the modern theory   can be fully retrieved from all databases, and   the information relevant to the modern and hormonal response theories can be fully retrieved from the variables in $z_A$.  The arrow from $z_A$ to $z_M$ is assigned the lowest non-zero value, since  from the information about the caloric intake and expenditure contained in $z_M$ one cannot retrieve the actual types of food the individuals ingested or the exercise they did.

For $X\in \{A, M, H\}$, and for any $z, z'\in Z$, the value of the $R_X$-arrow  from $z$ to $z'$  represents the similarity degree of $z'$ to $z$ {\em relative to} $X$. A concrete way to picture this is the following: assume that a scientist adopting theory $X$ is asked to which extent s/he would swap database $z$ for database $z'$. If the scientist in question is Aretaeus, and he is asked e.g.~to give up $z_H$ for $z_M$, he would not be very happy, for although $z_H$ is not particularly good for his purposes and requires  a substantial guesswork from him, he would be even worse off with $z_M$, and he would suffer the same loss of information captured by the value $E(z_H, z_M)$. This justifies letting $R_A(z_H, z_M): = E(z_H, z_M) = 0.4$. However, Aretaeus would certainly be willing to   swap $z_M$ for $z_H$, since whatever little he can do with $z_M$ can be certainly done with $z_H$, and in fact possibly more. Hence we let again $R_A(z_M, z_H): = E(z_M, z_H) = 1$, and so on. Hence, $R_A: = E$. If the scientist in question is Atwater, and he was asked to  give up $z_A$ for $z_H$, he would be fine with it, because both databases provide all the information relevant to the theoretical framework he has adopted. In fact, he would be fine with swapping any database for any other database: that is,  the relation $R_M: Z\times Z\to \mathbf{A}$ maps every tuple of databases to $1$.  An analogous reasoning  justifies the following definition for the relation $R_H: Z\times Z\to \mathbf{A}$:

\begin{center}
	\begin{tikzpicture}[scale=1.5]
	\node (ancient)    at (-2,0)    {$z_A$};
	\node (modern)   at (0,0)    {$z_M$};
	\node (hormonal)    at (2,0)    {$z_H$};
	\draw[->] (modern) -- (ancient) node [midway,below]{1};
	\draw[->] (modern) -- (hormonal) node [midway,below]{1};
	\draw[->, bend left = 25]  (ancient) to node [midway,above]{$0.3$} (modern);
	\draw[->, bend right = 25]  (hormonal) to node [midway,above]{$0.5$} (modern);
	\draw[->,   loop below, min distance=5mm]  (modern) to node [midway,below]{$1$} (modern);
	\draw[->,  loop left, min distance=5mm]  (ancient) to node [midway,above]{$1$} (ancient);
	\draw[->,  loop right, min distance=5mm]  (hormonal) to node [midway,above]{$1$} (hormonal);
	\draw[->,  bend left=50] (ancient) to node [midway,above]{$0.9$} (hormonal);
	\draw[->,  bend left=50] (hormonal) to node [midway,below]{$1$} (ancient);		
	\end{tikzpicture}	
\end{center} 
where  $R_H$ and $E$ only differ in the value of the arrow from $z_A$ to $z_H$. The relations above are $E$-compatible (cf.~Definition \ref{def:graph:based:frame:and:model}), and $E$-reflexive (i.e.~$E\subseteq R$ for any $R\in \{R_A, R_M, R_H\}$, see \cite{graph-based-wollic} Definition 6) hence $(Z, E, R_A, R_M, R_H)$ is  a graph-based $\mathbf{A}$-frame for a multi-modal language with modalities $\Box_A, \Box_M, \Box_H$ and $\Diamond_A, \Diamond_M, \Diamond_H$, in which the axioms  $\Box_i \phi\vdash \phi$ and $ \phi\vdash \Diamond_i \phi$ are valid for every $i\in \{A, M, H\}$ (cf.~Proposition \ref{prop:correspondence}).\footnote{Notice that, although the setting of \cite{graph-based-wollic} is crisp, the correspondence results in \cite{graph-based-wollic} Proposition 4  remain verbatim the same when passing to the many-valued setting. This is a phenomenon already observed in the correspondence theory for many-valued logics \cite{CMR}.}

Let the formula $\phi$ be the hypothesis  stating that individuals who restrict their daily caloric intake to less than 20 calories per kilogram of body mass will lose weight over time. This hypothesis is phrased in terms of the variables relevant to the modern theory, and hence it can be tested on {\em all databases} in $Z$. Let us assume that the results of the tests of $\phi$ do not vary from one situation  to another situation with the same degree of flexibility $\beta$, and 
it turns out that, though 80\% of the individuals restricting their daily caloric intake to less than 20 calories per kilogram of body mass indeed lost a bit of weight, generally not too much, 10\% of individuals remained at the same weight, and 10\% even gained weight. 
Let us assume that in the statistical model this results in moderate effect size, but a p-value of 0.1, which is  considered to yield too low a level of significance to reject the null-hypothesis (that caloric restriction has no effect). So we propose, for the sake of this example, to assign  
$\val{\phi}: Z_A \to \mathbf{A}$ according to the following table\footnote{It can be checked that this valuation is stable.}:

\begin{center}
\begin{tabular}{r|lll}
	$\beta$ &$z_A$  &$z_M$  &$z_H$\\
	\hline
	$0.0$   &$0.5$  &$0.5$  &$0.5$\\
	$0.1$   &$0.6$  &$0.6$  &$0.6$\\
	$0.2$   &$0.7$  &$0.7$  &$0.7$\\
	$0.3$   &$0.8$  &$0.8$  &$0.8$\\
	$0.4$   &$0.9$  &$0.9$  &$0.9$\\
	$0.5$   &$1.0$  &$1.0$  &$1.0$\\
	$0.6$   &$1.0$  &$1.0$  &$1.0$\\
	$0.7$   &$1.0$  &$1.0$  &$1.0$\\
	$0.8$   &$1.0$  &$1.0$  &$1.0$\\
	$0.9$   &$1.0$  &$1.0$  &$1.0$\\
	$1.0$   &$1.0$  &$1.0$  &$1.0$
\end{tabular}
\end{center}


Let the formula $\psi$ be the hypothesis  stating that individuals who restrict their daily caloric intake to less than 20 calories per kilogram of body mass and who let  at least 80\% of their caloric intake come from fat will lose more weight than individuals on the same daily caloric regime but getting less than 80\% of their calories from fat. This hypothesis is phrased in terms of the variables relevant to the hormonal response theory, and hence it can certainly be tested on $z_A$ and $z_H$. We wish to make a case that, modulo some guesswork that of course will make the results less reliable, the hypothesis $\psi$  {\em can} be tested on $z_M$ as well. For instance, if the database $z_M$ built by Atwater was based on a group of individuals living in Connecticut in the years 1890-1895, and for some fortuitous circumstances an independent database exists about their eating habits (e.g.~the list  of  customers of the local grocer's and their weekly orders, and by chance these customers also include the people in $z_M$), then it would be possible to make some estimates about which individuals in the sample of $z_M$ let at least 80\% of their daily caloric intake come from fat. This is of course an easy way out in our fictitious example, but it reflects a very common situation in the practice of  empirical research, that databases do not perfectly match the hypotheses that scientists wish to test on them, and that some guesswork is needed to a greater or lesser extent. 

Notice that the imperfect match between the observations in databases and variables in hypotheses is independent from the (maximum) degree of flexibility in operationalising variables (formally encoded in the value $\beta$ of the pairs which we refer to as `situations'). Specifically, the degree of flexibility in operationalising any variables is an {\em a priori} parameter that we fix for each `situation', independently of the hypotheses tested in the given situation.  In contrast, the discussion in the paragraph above is relative to the test of a specific hypothesis on a database, and hence depends inherently on the given hypothesis. Furthermore, once such a suitable translation is found, its suitability will not depend on how the target variable is operationalised in each situation, but will depend only on the match between the theory according to which the database has been built and the theory to which the hypothesis pertains.

Let us imagine that 
$\psi$ is confirmed  for  95\% of the individuals in the  samples of all databases. 
Let us assume that in the statistical model this results in a high effect size for coefficient of the (dummy) variable recording whether the high-fat diet was followed or not and a p-value of 0.01, which corresponds to a level of significance generally considered to be high enough  to reject the null-hypothesis (that the type of macronutrients from which {\em restricted} caloric intake  proceeds has no effect on weight loss). In short, the results in respect to this hypothesis seem very strong and credible. So we propose, for $\beta = 0$, to assign a truth-value of  0.8 to $\psi$ at  $z_A$ and $z_H$, and  a truth-value of 0.4 to $\psi$ at $z_M$,  a strong discount due to the guesswork needed to accommodate the testing of $\psi$ on $z_M$.\footnote{For higher values of $\beta$, these values increase accordingly. It can be checked that the valuation as specified in the table, is stable.} The following table gives the complete specification of $\val{\psi}$:

\begin{center}
	\begin{tabular}{r|lll}
		$\beta$ &$z_A$  &$z_M$  &$z_H$\\
		\hline
		$0.0$   &$0.8$  &$0.4$  &$0.8$\\
		$0.1$   &$0.9$  &$0.5$  &$0.9$\\
		$0.2$   &$1.0$  &$0.6$  &$1.0$\\
		$0.3$   &$1.0$  &$0.7$  &$1.0$\\
		$0.4$   &$1.0$  &$0.8$  &$1.0$\\
		$0.5$   &$1.0$  &$0.9$  &$1.0$\\
		$0.6$   &$1.0$  &$1.0$  &$1.0$\\
		$0.7$   &$1.0$  &$1.0$  &$1.0$\\
		$0.8$   &$1.0$  &$1.0$  &$1.0$\\
		$0.9$   &$1.0$  &$1.0$  &$1.0$\\
		$1.0$   &$1.0$  &$1.0$  &$1.0$
	\end{tabular}
\end{center}

We are now in a position to compute the extensions of $\Box_M\phi$, $\Box_H\phi$, $\Box_M\psi$ and $\Box_H\psi$.\footnote{Since $R_A: = E$, the modal operators $\Box_A$ and $\Diamond_A$ coincide with the identity on $\mathbb{X}^+$.} Intuitively, $\Box_X\chi$ can be understood  as what becomes of  hypothesis $\chi$  when `seen through the lenses' of  theory $X$.\footnote{These modal operators can be used to reason about  ``comparative studies" which span across all databases and establish the degree of similarity between each databases and the focal one.}

It can be verified that:

\begin{align*}
&\val{\Box_M \psi}\\ 
= &\left[\begin{tabular}{r|lll}
$\beta$ &$z_A$  &$z_M$  &$z_H$\\
\hline
$0.0$   &$0.4$  &$0.4$  &$0.4$\\
$0.1$   &$0.5$  &$0.5$  &$0.5$\\
$0.2$   &$0.6$  &$0.6$  &$0.6$\\
$0.3$   &$0.7$  &$0.7$  &$0.7$\\
$0.4$   &$0.8$  &$0.8$  &$0.8$\\
$0.5$   &$0.9$  &$0.9$  &$0.9$\\
$0.6$   &$1.0$  &$1.0$  &$1.0$\\
$0.7$   &$1.0$  &$1.0$  &$1.0$\\
$0.8$   &$1.0$  &$1.0$  &$1.0$\\
$0.9$   &$1.0$  &$1.0$  &$1.0$\\
$1.0$   &$1.0$  &$1.0$  &$1.0$
\end{tabular}\right]
\leq &\left[\begin{tabular}{r|lll}
$\beta$ &$z_A$  &$z_M$  &$z_H$\\
\hline
$0.0$   &$0.5$  &$0.5$  &$0.5$\\
$0.1$   &$0.6$  &$0.6$  &$0.6$\\
$0.2$   &$0.7$  &$0.7$  &$0.7$\\
$0.3$   &$0.8$  &$0.8$  &$0.8$\\
$0.4$   &$0.9$  &$0.9$  &$0.9$\\
$0.5$   &$1.0$  &$1.0$  &$1.0$\\
$0.6$   &$1.0$  &$1.0$  &$1.0$\\
$0.7$   &$1.0$  &$1.0$  &$1.0$\\
$0.8$   &$1.0$  &$1.0$  &$1.0$\\
$0.9$   &$1.0$  &$1.0$  &$1.0$\\
$1.0$   &$1.0$  &$1.0$  &$1.0$
\end{tabular}\right]\\
 && = \val{\phi}
\end{align*}

and

\begin{align*}
&\val{\Box_M \psi}\\ 
= &\left[\begin{tabular}{r|lll}
$\beta$ &$z_A$  &$z_M$  &$z_H$\\
\hline
$0.0$   &$0.4$  &$0.4$  &$0.4$\\
$0.1$   &$0.5$  &$0.5$  &$0.5$\\
$0.2$   &$0.6$  &$0.6$  &$0.6$\\
$0.3$   &$0.7$  &$0.7$  &$0.7$\\
$0.4$   &$0.8$  &$0.8$  &$0.8$\\
$0.5$   &$0.9$  &$0.9$  &$0.9$\\
$0.6$   &$1.0$  &$1.0$  &$1.0$\\
$0.7$   &$1.0$  &$1.0$  &$1.0$\\
$0.8$   &$1.0$  &$1.0$  &$1.0$\\
$0.9$   &$1.0$  &$1.0$  &$1.0$\\
$1.0$   &$1.0$  &$1.0$  &$1.0$
\end{tabular}\right]
\leq &\left[\begin{tabular}{r|lll}
$\beta$ &$z_A$  &$z_M$  &$z_H$\\
\hline
$0.0$   &$0.8$  &$0.4$  &$0.8$\\
$0.1$   &$0.9$  &$0.5$  &$0.9$\\
$0.2$   &$1.0$  &$0.6$  &$1.0$\\
$0.3$   &$1.0$  &$0.7$  &$1.0$\\
$0.4$   &$1.0$  &$0.8$  &$1.0$\\
$0.5$   &$1.0$  &$0.9$  &$1.0$\\
$0.6$   &$1.0$  &$1.0$  &$1.0$\\
$0.7$   &$1.0$  &$1.0$  &$1.0$\\
$0.8$   &$1.0$  &$1.0$  &$1.0$\\
$0.9$   &$1.0$  &$1.0$  &$1.0$\\
$1.0$   &$1.0$  &$1.0$  &$1.0$
\end{tabular}\right]\\
&&= \val{\psi}
\end{align*}

It can also be checked that $\val{\Box_H \phi} = \val{\phi}$, $\val{\Box_H \psi} = \val{\psi}$ and $\val{\Box_M \phi} = \val{\phi}$.

These identities and inequalities can be interpreted as follows: each theory leaves unchanged the hypotheses formulated in terms of its own variables, or proper subsets thereof; however, if a hypothesis formulated according to a more expressive theory is `seen through the lenses' of a less expressive theory (this is the case of $\Box_M \psi$), it is expected to score worse. Finally, $\phi$ and $\psi$ are prima facie incomparable. But is it really so?

\section{Epilogue}\label{Sec:epilogue}
Although very stylised and simplified, the scenario above illuminates a number of interesting notions and their interrelations. First, we have identified each theory with the set of its relevant variables. This move naturally provides a connection with a strand of research we have been recently developing, based on the idea that lattice-based modal logics can be interpreted as the logics of categories or formal concepts \cite{conradie2016categories,Tarkpaper,roughconcepts}. This connection can be articulated in general terms by modelling {\em theories as categories}, extensionally captured by sets of hypotheses, and intensionally captured by their relevant variables.

Having identified theories with sets of variables has allowed us to associate states of the models (understood as databases) with theories, thereby giving a very simple and concrete representation of the otherwise abstract idea that `observations are theory-laden', and that this {\em theory-ladennes} lays at the core of the informational entropy that this paper sets out to studying.  In the toy example of the previous section, states (databases) bijectively correspond to theories, but this does not need to be the case in general.

Related to this, we have captured a local and a global way in which {\em similarity} ensues from theory-driven informational entropy. Specifically, the relation $E$ captures the {\em local} perspective, in which a given database $z'$ is similar to a database $z$ to the extent to which $z'$ is amenable to test hypotheses formulated using the variables of the theory associated with $z$, that is, to the extent to which $z'$ is suitable to answer questions pertaining to `the theory of $z$', while the  relations $R_X$ capture the {\em global} perspective; i.e., $R_X$ encodes information on how similar any one database is to another in respect to their relative performances in testing hypotheses formulated using variables of $X$.

These formal tools can be used to illuminate 
a very common situation in the practice of empirical research, namely that databases  do {\em not} perfectly match the hypotheses that scientists wish to test on them, and that a key underlying aspect in empirical research concerns precisely how to address this imperfect match. In this paper, we have laid the groundwork for addressing this issue with bespoke logical tools, by means of the modal operators interpreted using the relations $R_X$, which, as discussed above,  translate hypotheses from the `language' (variables) of one theory to the `language' (variables) of another, and what is lost in translation depends on the relationship between the two theories.

Finally, we can try and discuss whether the formalization above throws light on the following two questions: what does it mean for {\em theories} to {\em compete}? And how do we assess whether one theory has outcompeted the other?

We propose the following view:  theories can compete, most obviously when hypotheses that belong to different theories (as $\phi$ and $\psi$ in our example belong to $M$ and $H$ respectively) predict the same dependent variable (weight loss in our example). The most direct way in which two theories (e.g.~$M$ and $H$) can compete is when  their respective hypotheses ($\phi$ and $\psi$) are tested on each member of a set of databases. 
Each of these databases will be more or less suitable to test a given hypothesis. In particular, any hypothesis  is expected to get its best scores\footnote{We do not mean `best scores' in absolute terms, but in relative terms: that is, a bad hypothesis will score low on every database, but it will still get its higher scores on databases that are maximally compatible with the theory in the language of which the hypothesis is formulated.} if tested either on databases which are constructed in accordance with the theory in the variables of which the hypothesis is formulated, or on databases that are {\em maximally similar} to those.
We refer to all these databases as being  `home-ground' to that given hypothesis. For instance, in the case study of the previous section, every database  is `home-ground' to $\phi$, while only $z_A$ and $z_H$ are `home-ground' to $\psi$. When a hypothesis is tested on a database that is not its `home-ground', it will typically find no or less adequate values for its variables. The solution, as in our example of testing $\psi$ on the database $z_M$, is to look for proxies that represent to some extent the missing variable (or recover the values of the missing variables by some motivated guesswork). These proxies are often second-best or worse, making the results of the test less credible, even if they lead to accepting the hypotheses. This results in assigning the hypothesis a lower truth value at the database where proxies/guesswork were needed to test the hypothesis. 
However, precisely due to the disadvantage of not being on `home-ground', 
if a hypothesis pertaining to theory $X$ is tested on a database $z$ which is not `home-ground' to it 
and gets more than half as good results as the competing hypothesis, pertaining to theory $Y$, to which $z$ is `home-ground', this should be considered an impressive victory of theory $X$  and its hypothesis, just as is the effect of the rule that goals scored by soccer teams (in e.g.~the Champion’s League) in away matches count double. 


Applying this view to the  case of $\phi$ and $\psi$, we can hence argue that $\psi$ outcompetes $\phi$, since, as discussed above,  every database  is `home-ground' to $\phi$, while only $z_A$ and $z_H$ are `home-ground' to $\psi$, and moreover, $\psi$ scores systematically better than $\phi$ on each database that is `home-ground' to both and, even when $\beta = 0$,  the lower score of $\psi$ on $z_M$ (0.4) is very close to the score of $\phi$ on $z_M$ (0.5). 

\section{Conclusions}

In this paper, we have introduced a complete many-valued semantic environment for (multi) modal languages based on the logic of general  (i.e.~not necessarily distributive) lattices, and, by means of a toy example, we have illustrated its potential as a tool for the formal analysis of situations arising in the theory and practice of empirical science. As this is only a preliminary exploration, many questions arise, both technical and conceptual, of which here we list a few.

\paragraph{A range of protocols for comparing theories.} As discussed in Sections \ref{sec:Case:Study} and \ref{Sec:epilogue} hypotheses that look prima facie incomparable can become comparable through the lenses of the modal operators. In this paper we do not insist on a specific protocol to establish a winner among competing theories. However, we wish to highlight that the framework introduced can accommodate a wide range of possible protocols, including those  involving common-knowledge-type constructions.

\paragraph{More expressive languages.}  We conjecture that the proof of completeness of the logic of  Section \ref{sec:logics} given in Appendix \ref{sec:completeness} can be extended modularly to more expressive languages  that display essentially ``many valued" features in analogy with those considered in \cite{bou2011minimum}. This is current work in progress.

\paragraph{Sahlqvist theory for many-valued non-distributive logics.} A natural direction of research is to develop the generalized Sahlqvist theory for the logics of graph-based $\mathbf{A}$-frames, by extending the results of  \cite{CMR} on Sahlqvist theory for  many-valued logics on a distributive base.

\paragraph{Towards an analysis theory dynamics.}   We have shown that using the many-valued environment allows us to discuss competition between theories in an intuitively appealing and formally sound manner. This lays the basis for a host of further developments to model the dynamics of theories and databases, to better understand what distance between theories means, as well as studying the hierarchical relations between theories, in analogy with the hierarchical structure of categories.

\paragraph{Socio-political theories and scientific theories.} The present semantic environment naturally lends itself not only to the analysis of competition of scientific theories but also to the analysis of a wide spectrum of phenomena in which theories, broadly construed, play a key role. For instance, building on the present work, in \cite{socio-political}, a formal environment is introduced in which  the similarities can be analysed between the competition among political theories (both in their institutional incarnations as political parties, and in their social incarnations as social blocks or groups) and the competition between scientific theories.

\begin{acknowledgment}
	The authors would like to thank Apostolos Tzimoulis for insightful discussions and for his substantial contributions to the proof of the completeness theorem. 
\end{acknowledgment}

\bibliographystyle{eusflat2019}
\bibliography{BIBeusflat2019}

\begin{thebibliography}{10}
\expandafter\ifx\csname url\endcsname\relax
  \def\url#1{\texttt{#1}}\fi
\expandafter\ifx\csname urlprefix\endcsname\relax\def\urlprefix{URL }\fi

\bibitem{belohlavek}
R.~B{\^e}lohl{\'a}vek, Fuzzy galois connections, Mathematical Logic Quarterly
  45~(4) (1999) 497--504.

\bibitem{bou2011minimum}
F.~Bou, F.~Esteva, L.~Godo, R.~O. Rodr{\'\i}guez, On the minimum many-valued
  modal logic over a finite residuated lattice, Journal of Logic and
  {C}omputation 21~(5) (2011) 739--790.

\bibitem{conradie2015relational}
W.~Conradie, A.~Craig, Relational semantics via {TiRS} graphs, TACL 2015.

\bibitem{graph-based-wollic}
W.~Conradie, A.~Craig, A.~Palmigiano, N.~Wijnberg, Modelling informational
  entropy, in: Proc.~WoLLIC 2019, Vol. 11541 of LNCS, 2019, pp. 140--160.

\bibitem{roughconcepts}
W.~Conradie, S.~Frittella, K.~Manoorkar, S.~Nazari, A.~Palmigiano,
  A.~Tzimoulis, N.~Wijnberg, Rough concepts, Submitted.

\bibitem{conradie2016categories}
W.~Conradie, S.~Frittella, A.~Palmigiano, M.~Piazzai, A.~Tzimoulis,
  N.~Wijnberg, {Categories: How I Learned to Stop Worrying and Love Two Sorts},
  in: Proc.~WoLLIC 2016, Vol. 9803 of LNCS, 2016, pp. 145--164.

\bibitem{Tarkpaper}
W.~Conradie, S.~Frittella, A.~Palmigiano, M.~Piazzai, A.~Tzimoulis,
  N.~Wijnberg, Toward an epistemic-logical theory of categorization, in:
  Proc.~TARK 2017, Vol. 251 of EPTCS, 2017, pp. 167--186.

\bibitem{conradie2016constructive}
W.~Conradie, A.~Palmigiano, Constructive canonicity of inductive inequalities,
  arXiv preprint arXiv:1603.08341.

\bibitem{CoPa:non-dist}
W.~Conradie, A.~Palmigiano, {Algorithmic correspondence and canonicity for
  non-distributive logics}, Annals of Pure and Applied Logic 170(9) (2019)
  923--974.

\bibitem{socio-political}
W.~Conradie, A.~Palmigiano, C.~Robinson, A.~Tzimoulis, N.~Wijnberg, Modelling
  socio-political competition, Submitted.

\bibitem{conradie2018goldblatt}
W.~Conradie, A.~Palmigiano, A.~Tzimoulis, Goldblatt-{T}homason for {LE}-logics,
  arXiv preprint arXiv:1809.08225.

\bibitem{fitting1991many}
M.~Fitting, Many-valued modal logics, Fundam. Inform. 15~(3-4) (1991) 235--254.

\bibitem{galatos2013residuated}
N.~Galatos, P.~Jipsen, Residuated frames with applications to decidability,
  Transactions of the American Mathematical Society 365~(3) (2013) 1219--1249.

\bibitem{ganter2012formal}
B.~Ganter, R.~Wille, Formal concept analysis: mathematical foundations,
  Springer, 2012.

\bibitem{gehrke2006generalized}
M.~Gehrke, Generalized {K}ripke frames, Studia Logica 84~(2) (2006) 241--275.

\bibitem{gehrke2001bounded}
M.~Gehrke, J.~Harding, Bounded lattice expansions, Journal of Algebra 238~(1)
  (2001) 345--371.

\bibitem{greco2018algebraic}
G.~Greco, P.~Jipsen, F.~Liang, A.~Palmigiano, A.~Tzimoulis, Algebraic proof
  theory for {LE}-logics, arXiv preprint arXiv:1808.04642.

\bibitem{ICLA2019paper}
G.~Greco, P.~Jipsen, K.~Manoorkar, A.~Palmigiano, A.~Tzimoulis, Logics for
  rough concept analysis, in: Proc.~ICLA 2019, Vol. 11600 of LNCS, 2019, pp.
  144--159.

\bibitem{CMR}
C.~le~Roux, {Correspondence theory in many-valued modal logics}, Master's
  thesis, University of Johannesburg, South Africa (2016).

\bibitem{pawlak}
Z.~Pawlak, Rough set theory and its applications to data analysis, Cybernetics
  \& Systems 29~(7) (1998) 661--688.

\bibitem{vakarelov2005modal}
D.~Vakarelov, A modal characterization of indiscernibility and similarity
  relations in {{P}awlak{'}s} information systems, in: International Workshop
  on Rough Sets, Fuzzy Sets, Data Mining, and Granular-Soft Computing,
  Springer, 2005, pp. 12--22.

\bibitem{wybraniec1989generalization}
U.~Wybraniec-Skardowska, On a generalization of approximation space, Bulletin
  of the Polish Academy of Sciences. Mathematics 37~(1-6) (1989) 51--62.

\bibitem{yao1996-Sahlqvist}
Y.~Yao, T.~Y. Lin, Generalization of rough sets using modal logics, Intelligent
  Automation \& Soft Computing 2~(2) (1996) 103--119.

\end{thebibliography}

\appendix

\section{Correspondence results}
\label{sec:correspondence}
In what follows, for every graph-based $\mathcal{L}$-frame $\mathbb{G}$, we let $R_{\blacksquare}: Z\times Z\to\mathbf{A}$ be defined by the assignment $(z, z')\mapsto R_{\Diamond}(z', z)$. 
\begin{proposition}
\label{prop:correspondence}
The following are equivalent for every graph-based $\mathcal{L}$-frame $\mathbb{G}$:
\begin{enumerate}
\item $\mathbb{G}\models \Box\bot\vdash \bot$ iff for any $(\beta, z)\in Z_A$,
\[\bigwedge_{z'\in Z_X}[R_{\Box}(z, z') \to \beta] \leq \bigwedge_{z'\in Z_X}[E(z, z') \to \beta].\]
\item $\mathbb{G}\models \top\vdash \Diamond\top$ iff  for any $z\in Z_X$,
\[\bigwedge_{(\alpha, z')\in Z_A}[R_{\Diamond}(z, z') \to \alpha]\leq \bigwedge_{(\alpha, z')\in Z_A}[E(z', z) \to \alpha].\]
\item $\mathbb{G}\models \Box p\vdash p\quad$ iff $\quad E\subseteq R_{\Box}$.
\item $\mathbb{G}\models p\vdash \Diamond p\quad $ iff $\quad E\subseteq R_{\blacksquare}$.
\end{enumerate}
\end{proposition}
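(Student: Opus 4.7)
The proof splits along the four clauses. Parts (1) and (2) involve only $\top$ and $\bot$ and collapse to a direct semantic computation. For (1), using $\descr{\bot} = 1^{\mathbf{A}^{Z_X}}$, the valuation clauses give $\val{\Box\bot}(\beta,z) = R_\Box^{[0]}[1^{\mathbf{A}^{Z_X}}](\beta,z) = \bigwedge_{z'\in Z_X}[R_\Box(z,z')\to\beta]$ and $\val{\bot}(\beta,z) = \bigwedge_{z'\in Z_X}[E(z,z')\to\beta]$, so $\val{\Box\bot}\subseteq\val{\bot}$ is exactly the stated inequality. Part (2) is dual, comparing $\descr{\Diamond\top}(z) = \bigwedge_{(\alpha,z')}[R_\Diamond(z,z')\to\alpha]$ and $\descr{\top}(z) = \bigwedge_{(\alpha,z')}[E(z',z)\to\alpha]$.

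For (3), the direction $(\Leftarrow)$ follows by antitonicity of $(-)\to\alpha$ in the first argument: from $E(z,z')\leq R_\Box(z,z')$ one obtains $R_\Box(z,z')\to\alpha \leq E(z,z')\to\alpha$, whence $R_\Box^{[0]}[\descr{p}]\subseteq E^{[0]}[\descr{p}] = \descr{p}^{[0]} = \val{p}$ for every concept $(\val{p},\descr{p})$. For $(\Rightarrow)$, fix $z_0, z_0'\in Z$ and pick the minimal valuation $\val{p} := \{1/z_0'\}^{[0]}$, $\descr{p} := \{1/z_0'\}^{[01]}$, which is a legitimate concept since $u^{[010]} = u^{[0]}$. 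Frame validity gives $R_\Box^{[0]}[\{1/z_0'\}^{[01]}] \subseteq \{1/z_0'\}^{[0]}$, and by Lemma \ref{equivalence of I-compatible-mv}(2) the $E$-compatibility of $R_\Box$ yields $R_\Box^{[0]}[\{1/z_0'\}^{[01]}] = R_\Box^{[0]}[\{1/z_0'\}]$. Direct computation of the two sides at $(\alpha,z)$ gives $R_\Box(z,z_0')\to\alpha$ and $E(z,z_0')\to\alpha$ respectively, so $R_\Box(z,z_0')\to\alpha \leq E(z,z_0')\to\alpha$ for all $z, \alpha$. Specializing $\alpha := R_\Box(z_0, z_0')$ at $z := z_0$ and using $\beta\to\beta = 1$ yields $1\leq E(z_0,z_0')\to R_\Box(z_0,z_0')$, i.e.\ $E(z_0,z_0')\leq R_\Box(z_0,z_0')$; arbitrariness of $z_0, z_0'$ gives $E\subseteq R_\Box$.

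Part (4) is entirely dual. For $(\Leftarrow)$, $E\subseteq R_\blacksquare$ (i.e.\ $E(z',z)\leq R_\Diamond(z,z')$) gives by the same antitonicity argument $R_\Diamond^{[0]}[\val{p}] \subseteq \val{p}^{[1]} = \descr{p}$. For $(\Rightarrow)$, fix $z_0, z_0'\in Z$ and $\alpha_0 \in \mathbf{A}$, and take the minimal valuation $\val{p} := \{1/(\alpha_0, z_0')\}^{[10]}$, $\descr{p} := \{1/(\alpha_0, z_0')\}^{[1]}$, which is a concept since $f^{[101]} = f^{[1]}$. By Lemma \ref{equivalence of I-compatible-mv}(3) the $E$-compatibility of $R_\Diamond$ collapses $R_\Diamond^{[0]}[\val{p}]$ to $R_\Diamond^{[0]}[\{1/(\alpha_0, z_0')\}]$, which evaluates pointwise to $R_\Diamond(z, z_0')\to\alpha_0$, while $\{1/(\alpha_0, z_0')\}^{[1]}(z) = E(z_0', z)\to\alpha_0$. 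Validity then gives $R_\Diamond(z, z_0')\to\alpha_0 \leq E(z_0', z)\to\alpha_0$ for all $z, z_0', \alpha_0$; specializing at $z := z_0$ and $\alpha_0 := R_\Diamond(z_0, z_0')$ yields $E(z_0', z_0) \leq R_\Diamond(z_0, z_0') = R_\blacksquare(z_0', z_0)$, whence $E\subseteq R_\blacksquare$.

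The main obstacle is the appeal to Lemma \ref{equivalence of I-compatible-mv} (parts (2) and (3)): without the $E$-compatibility-driven collapse $R_\Box^{[0]}[u^{[01]}] = R_\Box^{[0]}[u]$ (and its $R_\Diamond$ analogue), frame validity at the minimal valuation bounds only $R_\Box^{[0]}$ evaluated at the closed singleton $\{1/z_0'\}^{[01]}$, which is typically strictly larger than the generator, and the pointwise frame condition cannot be read off. Everything else reduces to routine residuated-lattice manipulation: antitonicity of $(-)\to\alpha$ in the first argument, reflexivity $E(z,z) = 1$, and the identities $\beta\to\beta = 1$ and $1\to\gamma = \gamma$.
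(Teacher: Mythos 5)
Your proof is correct and follows essentially the same route as the paper's: items 1--2 by directly unfolding the valuation clauses for the constant formulas, and items 3--4 by instantiating the closed singleton-generated (``minimal'') valuations, removing the closure via the $E$-compatibility collapse of Lemma~\ref{equivalence of I-compatible-mv}, and finishing with residuation, the converse directions following from the (anti)tonicity of $\to$ --- the paper merely packages the hard direction through ALBA's reduction to (co-)nominals and keeps a general $\alpha$ in the singleton before specializing to $1$, which your two-implication split renders unnecessary. The only slip is cosmetic: the collapse $R_\Diamond^{[0]}[f^{[10]}] = R_\Diamond^{[0]}[f]$ you use in item 4 is the content of item 4 of Lemma~\ref{equivalence of I-compatible-mv} (the $R_\Diamond^{[0]}$/$f^{[10]}$ clause), not item 3.
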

\begin{proof}
1. 
\begin{center}
\begin{tabular}{c l}
& $\Box\bot\leq \bot$\\
iff & $R^{[0]}_{\Box}[\descr{\bot}]\leq \val{\bot}$\\
iff & $R^{[0]}_{\Box}[1^{\mathbf{A}^{Z_X}}]\leq (1^{\mathbf{A}^{Z_X}})^{[0]}$\\
iff & $R^{[0]}_{\Box}[1^{\mathbf{A}^{Z_X}}](\beta, z)\leq (1^{\mathbf{A}^{Z_X}})^{[0]}(\beta, z)$  $\forall(\beta, z)\in Z_A$\\
iff & $\bigwedge_{z'\in Z_X}[1(z')\to (R_{\Box}(z, z') \to \beta)]$\\
& $\leq \bigwedge_{ z'\in Z_X}[1(z')\to (E(z, z') \to \beta)]$   $\forall(\beta, z)\in Z_A$\\
iff & $\bigwedge_{z'\in Z_X}[R_{\Box}(z, z') \to \beta]$\\
& $\leq \bigwedge_{z'\in Z_X}[E(z, z') \to \beta]$  for any $(\beta, z)\in Z_A$.\\
\end{tabular}
\end{center}

2. 
\begin{center}
\begin{tabular}{c l}
& $\top\leq \Diamond\top$\\
iff & $R^{[0]}_{\Diamond}[\val{\top}]\leq \descr{\top}$\\
iff & $R^{[0]}_{\Diamond}[1^{\mathbf{A}^{Z_A}}]\leq (1^{\mathbf{A}^{Z_A}})^{[1]}$\\
iff & $R^{[0]}_{\Diamond}[1^{\mathbf{A}^{Z_A}}](z)\leq (1^{\mathbf{A}^{Z_A}})^{[0]}( z)$ for any $z\in Z_X$\\
iff & $\bigwedge_{(\alpha, z')\in Z_A}[1(\alpha, z')\to (R_{\Diamond}(z, z') \to \alpha)]$\\
& $\leq \bigwedge_{(\alpha, z')'\in Z_A}[1(\alpha,z')\to (E(z', z) \to \alpha)]$   $\forall z\in Z_X$\\
iff & $\bigwedge_{(\alpha, z')\in Z_A}[R_{\Diamond}(z, z') \to \alpha]$\\
& $\leq \bigwedge_{(\alpha, z')\in Z_A}[E(z', z) \to \alpha]$  for any $z\in Z_X$.\\
\end{tabular}
\end{center}

3. 
	\begin{center}
		\begin{tabular}{cll}
			&$\forall p [\Box p \leq p]$\\
			iff &$\forall \cnomm [\Box \cnomm \leq \cnomm]$ &(ALBA \cite{CoPa:non-dist})\\
			iff &$\forall \alpha \forall z [R^{[0]}_{\Box}[\{\alpha/z\}^{[01]}]  \leq \{\alpha/z\}^{[0]}]$ & ($\cnomm := \{\alpha/z\}$) \\
			iff &$\forall \alpha \forall z [R^{[0]}_{\Box}[\{\alpha/z\}]  \leq \{\alpha/z\}^{[0]}]$ &(Lemma \ref{equivalence of I-compatible-mv})\\
			iff &$\forall \alpha, \beta \forall z, w[\alpha \to (R_{\Box}(w, z) \to\beta) $\\
			& $\leq \alpha \to (E(w, z) \to\beta)$ &($\ast$)\\
			iff &$E\subseteq R_{\Box}$. &($\ast\ast$)
		\end{tabular}
	\end{center} 
	To justify the equivalence to ($\ast$) we note that $R^{[0]}_{\Box}[\{\alpha/z\}](\beta, w) = \bigwedge_{z' \in Z_X}[\{\alpha/z\}(z') \to (R_{\Box}(w, z')\to\beta)]  = \alpha \to (R_{\Box}(w, z) \to\beta)$, and moreover $\{\alpha/z\}^{[0]}(\beta, w) = \bigwedge_{z' \in Z_X}(\{\alpha/z\}(z') \to (E(w, z')\to\beta)) = \alpha \to (E(w, z)\to\beta)$. For the equivalence to ($\ast\ast$), note that instantiating $\alpha: = 1$ and $\beta: = R_{\Box} (w, z)$ in ($\ast$) yields $1 \leq E(w, z) \to R_{\Box} (w, z)$ which, by residuation, is equivalent to ($\ast\ast$). The converse direction is immediate by the monotonicity of $\to$ in the second coordinate and its antitonicity in the first coordinate.	
	
4.
\begin{center}
\begin{tabular}{c ll}
& $p\leq \Diamond p$\\
iff &$\forall \nomj[\nomj\leq \Diamond \nomj]$ & \\ 
	iff &$\forall \alpha,\beta \forall z [R^{[0]}_{\Diamond}[\{\alpha/(\beta, z)\}^{[01]}]  \leq \{\alpha/(\beta, z)\}^{[1]}]$ & \\
iff &$\forall \alpha, \beta \forall z [R^{[0]}_{\Diamond}[\{\alpha/(\beta, z)\}]  \leq \{\alpha/(\beta, z)\}^{[1]}]$ & \\ 
iff &$\forall \alpha, \beta \forall z, w[\alpha \to (R_{\Diamond}(w, z)\to\beta) $\\
			& $\leq \alpha \to (E(z, w) \to\beta)$ &($\ast$)\\
			iff &$E\subseteq R_{\blacksquare}$. &($\ast\ast$)
\end{tabular}
\end{center}
	As to the equivalence to ($\ast$), note that $R^{[0]}_{\Diamond}[\{\alpha/(\beta, z)\}] (w) = \bigwedge_{(\gamma, z') \in Z_A}[\{\alpha/(\beta, z)\}(\gamma, z') \to (R_{\Diamond}(w, z')\to\gamma)]  = \alpha \to (R_{\Diamond}(w, z) \to\beta)$, and that $\{\alpha/(\beta, z)\}^{[1]}(w) = \bigwedge_{(\gamma, z') \in Z_A}(\{\alpha/(\beta, z)\}(\gamma, z') \to (E(z',w)\to\gamma)) = \alpha \to (E(z, w)\to\beta)$. For the equivalence to ($\ast\ast$) note that instantiating $\alpha := 1$ and $\beta: = R_{\Diamond} (w, z) = R_{\blacksquare}(z, w)$ in ($\ast$) yields $1 \leq E(z, w) \to R_{\blacksquare}(z, w)$ which, by residuation, is equivalent to ($\ast\ast$). The converse direction is immediate by the monotonicity of $\to$ in the second coordinate and its antitonicity in the first coordinate.	
\end{proof}

\begin{remark}
Clearly, $E$-reflexivity (i.e.~condition $E\subseteq R_{\Box}$ in Proposition \ref{prop:correspondence}.3) implies the inequality in Proposition \ref{prop:correspondence}.1; however, this inequality is also verified under weaker but practically relevant assumptions. For instance,
if $\mathbf{A}$ is a finite chain, the inequality in Proposition \ref{prop:correspondence}.1 is equivalent to $\mathrm{min}\{R_{\Box}(z, z') \to \beta\mid z'\in Z_X\}\leq \mathrm{min}\{E(z, z') \to \beta\mid z'\in Z_X\} = E(z, z)\to\beta = 1\to\beta = \beta$. Hence, this condition is equivalent to the condition that for every $\beta\in \mathbf{A}$ and $z\in Z$, some $z'\in Z$ exists such that    $R_{\Box}(z, z') \to \beta\leq \beta$. This condition is satisfied if for every $z\in Z$ some $z'\in Z$ exists such that    $R_{\Box}(z, z')  = 1$. Similar considerations apply to the remaining items of the proposition above.
\end{remark}

\section{Completeness}
\label{sec:completeness}
\newcommand{\PP}{\mathbb{Fm}}
\newcommand{\SD}{\mathbb{Fm}}
\newcommand{\SI}{\mathbb{Fm}}

This section is an adaptation and expansion of  the completeness result of \cite[Appendix B]{socio-political}, of which Apostolos Tzimoulis and Claudette Robinson are prime contributors. We will use the validity of $\Box \bot\vdash \bot$  in the proof of the $\Box$ case in Lemma \ref{lemma:truthlemma}. As discussed in Section \ref{sec:Case:Study}, this axiom is valid in the model of our case study. 

For the sake of uniformity with previous settings (cf.~e.g.~\cite[Section 7.2]{roughconcepts}) in this section, we work with graph-based frames $\mathbb{G} = (\mathbb{X}, R_{\Box}, R_{\Diamond})$ the associated complex algebras of which are order-dual to the one in Definition \ref{def:graph:based:frame:and:model}. That is, for the sake of this section, we define the enriched formal context $\mathbb{P}_{\mathbb{G}}: = (Z_A, Z_X, I_{E} , I_{R_\Box}, J_{R_\Diamond})$ by setting $Z_A: = Z$, $Z_X: = \mathbf{A}\times Z$ and $I_{E}: Z_A\times Z_X\to \mathbf{A}$ and
$I_{R_\Box}: Z_A\times Z_X\to \mathbf{A}$ and $J_{R_\Diamond}: Z_X\times Z_A\to \mathbf{A}$ be defined by the assignments $(z, (\alpha, z'))\mapsto E(z, z')\to \alpha$,
 $(z, (\alpha, z'))\mapsto R_{\Box}(z, z')\to \alpha$ and $((\alpha, z), z')\mapsto R_{\Diamond}(z, z')\to \alpha$, respectively.

For any lattice $\mathbb{L}$, an $\mathbf{A}$-{\em filter} is an $\mathbf{A}$-subset of $\mathbb{L}$, i.e.~a map $f:\mathbb{L}\to \mathbf{A}$, which is both $\wedge$- and $\top$-preserving, i.e.~$f(\top) = 1$ and $f(a\wedge b) = f(a)\wedge f(b)$ for any $a, b\in\mathbb{L}$. Intuitively, the $\wedge$-preservation encodes a many-valued version of closure under $\wedge$ of filters. An $\mathbf{A}$-filter is {\em proper} if it is also $\bot$-preserving, i.e.~$f(\bot)= 0$.   Dually, an $\mathbf{A}$-{\em ideal} is a map $i:\mathbb{L}\to \mathbf{A}$ which is both $\vee$- and $\bot$-reversing, i.e.~$i(\bot) = \top$ and $i(a\vee b) = i(a)\wedge i(b)$ for any $a, b\in\mathbb{L}$, and is {\em proper} if in addition $i(\top) = 0$. The {\em complement} of a (proper) $\mathbf{A}$-ideal is a map $u: \mathbb{L}\to \mathbf{A}$  which is both $\vee$- and $\bot$-preserving, i.e.~$u(\bot) = 0$ and $u(a\vee b) = u(a)\vee u(b)$ for any $a, b\in\mathbb{L}$ (and in addition $u(\top) = 1$). Intuitively, $u(a)$ encodes the extent to which $a$ does not belong to the ideal of which $u$ is the many-valued complement.  We let $\mathsf{F}_{\mathbf{A}}(\mathbb{L})$, $\mathsf{I}_{\mathbf{A}}(\mathbb{L})$ and $\mathsf{C}_{\mathbf{A}}(\mathbb{L})$ respectively denote the set of  proper $\mathbf{A}$-filters,   proper $\mathbf{A}$-ideals, and  the complements of proper $\mathbf{A}$-ideals of $\mathbb{L}$. 
For any $\mathcal{L}$-algebra $(\mathbb{L}, \Box, \Diamond)$, and any $\mathbf{A}$-subset $k:\mathbb{L}\to \mathbf{A}$, let $k^{-\Diamond}: \mathbb{L}\to \mathbf{A}$  be defined as $k^{-\Diamond}(a)=\bigvee\{k(b)\mid \Diamond b\leq a\}$
and let $k^{-\Box}: \mathbb{L}\to \mathbf{A}$  be defined as $k^{-\Box}(a)=\bigwedge \{k(b)\mid a \leq \Box b \}$. 
By definition one can see that $k(a)\leq k^{-\Diamond}(\Diamond a)$ and 
$k^{-\Box}(\Box a) \leq k(a)$ for every $a\in \mathbb{L}$.
Let $\mathbf{Fm}$ (resp.\ $\mathbf{Fm}_0$, $\mathbf{Fm}_1$) be the Lindenbaum-Tarski algebra of the basic $\mathcal{L}$-logic $\mathbf{L}$ (resp.\ $\mathbf{L}_0$, $\mathbf{L}_1$). Moreover, in what follows we write $\mathbf{Fm}_{\ast}$ for the  Lindenbaum-Tarski algebra of an arbitrary (not necessarily proper) extension $\mathbf{L}_{\ast}$ of $\mathbf{L}$. In the remainder of this section, we abuse notation and identify formulas with their equivalence class in $\mathbf{Fm}_{\ast}$. 
\begin{lemma}\label{lemma:f minus diam is filter}
	\hspace{2em}
\begin{enumerate}
\item If $f:\mathbb{L}\to \mathbf{A}$ is  an $\mathbf{A}$-filter, then so is $f^{-\Diamond}$.  
\item If $f:\mathbf{Fm}_{\ast}\to \mathbf{A}$ is  a proper $\mathbf{A}$-filter, then so is $f^{-\Diamond}$.
\item If $u:\mathbb{L}\to \mathbf{A}$ is  the complement of an $\mathbf{A}$-ideal, then so is $u^{-\Box}$.  
\item If $u:\mathbf{Fm}_{\ast}\to \mathbf{A}$ is  the complement of a proper $\mathbf{A}$-ideal, then so is $u^{-\Box}$.
\item If $\phi, \psi\in \mathbf{Fm}_{\ast}$, then  $\phi\vee\psi = \top$ implies that  $\phi = \top$ or $\psi = \top$.
\item If $\phi, \psi\in \mathbf{Fm}_{\ast}$, then     $\phi \not\vdash \bot$ and $\psi \not\vdash \bot$ imply that $\phi\wedge\psi \not\vdash \bot$.
\end{enumerate}
\end{lemma}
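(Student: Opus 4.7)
The six clauses group into three dual pairs, $(1,3)$, $(2,4)$, and $(5,6)$. I sketch the $\Diamond$-side of each and note that the $\Box$-side is exactly symmetric, swapping $\wedge\leftrightarrow\vee$, joins with meets, $\Diamond\bot\vdash\bot$ with $\top\vdash\Box\top$, and so on.

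For Parts 1 and 3, the proof is direct verification of the defining conditions of an $\mathbf{A}$-filter (resp.\ the complement of an $\mathbf{A}$-ideal). For Part 1, $f^{-\Diamond}(\top)=1$ follows by picking $b:=\top$ in the defining join and using $\Diamond\top\leq\top$ together with $f(\top)=1$. For meet-preservation, the $\leq$ inequality is immediate since $\Diamond b\leq a\wedge a'$ entails $\Diamond b\leq a$ and $\Diamond b\leq a'$. The only delicate direction is $\geq$: using frame-distributivity of $\mathbf{A}$, I would expand $f^{-\Diamond}(a)\wedge f^{-\Diamond}(a')$ as a join indexed by pairs $(b,b')$ with $\Diamond b\leq a$ and $\Diamond b'\leq a'$, whose summands $f(b)\wedge f(b')=f(b\wedge b')$ are each bounded above by $f^{-\Diamond}(a\wedge a')$, because monotonicity of $\Diamond$ gives $\Diamond(b\wedge b')\leq\Diamond b\wedge\Diamond b'\leq a\wedge a'$. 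Part 3 is dual and relies on $\Box b\vee\Box b'\leq\Box(b\vee b')$, itself a consequence of the monotonicity of $\Box$.

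For Parts 2 and 4, preserving properness reduces to showing $f^{-\Diamond}(\bot)=0$ and, dually, $u^{-\Box}(\top)=1$. Each reduction hinges on a syntactic fact about $\mathbf{Fm}_{\ast}$: namely, that $\Diamond\phi\vdash\bot$ forces $\phi\vdash\bot$, and $\top\vdash\Box\phi$ forces $\top\vdash\phi$. Granting these, the defining joins (resp.\ meets) collapse to the single summand $f(\bot)=0$ (resp.\ $u(\top)=1$) given by properness of $f$ (resp.\ $u$). I would establish both syntactic facts by cut-elimination for $\mathbf{L}_{\ast}$ followed by subformula analysis of the last rule of a cut-free derivation: the only non-inductive way to obtain $\bot$ on the right of a sequent whose left-hand side is $\Diamond\phi$ is via the axiom $\Diamond\bot\vdash\bot$, which forces $\phi=\bot$; otherwise the derivation ends with $\Diamond$-monotonicity and one closes by induction. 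Parts 5 and 6 are the classical disjunction property and its dual $\bot$-primality for $\mathbf{L}_{\ast}$, and I would argue by the same cut-elimination strategy: a cut-free derivation of $\top\vdash\phi\vee\psi$ (resp.\ $\phi\wedge\psi\vdash\bot$) must terminate with an application of $\vee$-right (resp.\ $\wedge$-left), the premiss of which is exactly the desired sequent.

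\textbf{Main obstacle.} The true technical heart is the proof-theoretic work underlying Parts 2, 4, 5, and 6: algebraically, all four clauses are manifestations of the primality of $[\bot]$ and $[\top]$ in $\mathbf{Fm}_{\ast}$ with respect to the relevant operations, and they all hinge on a careful cut-elimination plus subformula case analysis for $\mathbf{L}_{\ast}$. Once those primality facts are secured, everything else in the lemma reduces to routine algebraic bookkeeping with frame-distributivity and modal monotonicity.
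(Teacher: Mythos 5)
Your treatment of items 1 and 3 is essentially the paper's own argument: expand the meet of the defining joins by frame-distributivity, merge $f(c_1)\wedge f(c_2)=f(c_1\wedge c_2)$, and use $\Diamond(c_1\wedge c_2)\leq \Diamond c_1\wedge\Diamond c_2$ (dually, monotonicity of $\Box$) — no issues there.

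For items 2, 4, 5 and 6, however, your plan rests on an ingredient you have not supplied and which is genuinely problematic: cut-elimination, with a last-rule/subformula analysis, for $\mathbf{L}_{\ast}$. The paper's presentation of $\mathbf{L}$ is a Hilbert-style consequence system (axioms, substitution, cut-as-transitivity), not an analytic sequent calculus; designing such a calculus for non-distributive modal logic, proving it equivalent and cut-free, and extracting the primality of $[\bot]$ and $[\top]$ from it is a substantial piece of work that your sketch simply postulates. Worse, you claim it uniformly for arbitrary axiomatic extensions $\mathbf{L}_{\ast}$, where no such analytic calculus exists in general; indeed the disjunction property of item 5 itself can fail for some axiomatic extensions (e.g.\ adding an axiom of the form $\top\vdash \Diamond\top\vee\Box\bot$ makes $\top$ a join of two elements neither of which need be $\top$), so no uniform syntactic argument of the kind you describe can go through at that level of generality — any correct proof has to exploit specific properties of the logics at hand. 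The paper avoids proof theory entirely: for item 2 it uses algebraic completeness of the logic with respect to normal lattice expansions and, given a countermodel for $\phi\vdash\bot$, adjoins a new least element $0'$ with $\Diamond 0'=0'$ to refute $\Diamond\phi\vdash\bot$; item 4 is dual; for item 5 it takes countermodels $\mathbb{C}_1,\mathbb{C}_2$ for $\top\vdash\phi$ and $\top\vdash\psi$, forms $\mathbb{C}_1\times\mathbb{C}_2$, adds a new join-irreducible top $\top'$, extends $\Box$ and $\Diamond$ so that normality is preserved and $\top'$ is not in the range of $\Diamond'$, and proves by induction on formulas that values stay below $\top'$; item 6 is dual. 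If you want to keep a syntactic route, you would have to either exhibit and justify a concrete cut-free (e.g.\ display-style) calculus for the specific logics involved and carry out the case analysis in it, or replace that step by the paper's semantic constructions; as it stands, the heart of items 2, 4, 5 and 6 is missing.
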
  
\begin{proof} 
1.  
For all $a, b\in \mathbb{L}$, 
\begin{center}
\begin{tabular}{cll}
   $f^{-\Diamond}(\top)$ & = & $\bigvee\{f(b)\mid \Diamond b\leq \top\}$\\
   & = & $\bigvee\{f(b)\mid b\in \mathbb{L}\}$\\
   & = & $f(\top)$\\
   & = & $1$\\
\end{tabular}
\end{center}
\begin{center}
\begin{tabular}{cll}
   &$f^{-\Diamond}(a)\wedge f^{-\Diamond}( b)$\\
    = &$ \bigvee\{f(c_1)\mid \Diamond c_1\leq a\} \wedge \bigvee\{f(c_2)\mid \Diamond c_2\leq b\}$\\
        = &$ \bigvee\{f(c_1)\wedge f(c_2)\mid \Diamond c_1\leq a\mbox{ and }\Diamond c_2\leq b\}$ & ($\star$) \\ 
             = &$ \bigvee\{f(c_1\wedge c_2)\mid \Diamond c_1\leq a\mbox{ and }\Diamond c_2\leq b\}$ &($\sharp$) \\
               = &$ \bigvee\{f(c)\mid \Diamond c\leq a\mbox{ and }\Diamond c\leq b\}$ & ($\ast$)\\
 = &$\bigvee\{f(c)\mid \Diamond c\leq a\wedge b\}$ & \\
=&$f^{-\Diamond}(a\wedge b)$,\\
 \end{tabular}
 \end{center}
 the equivalence marked with ($\star$) being due to frame distributivity, the one marked with ($\sharp$) to the fact that $f$ is and $\mathbf{A}$-filter, and the one marked with ($\ast$) to the fact that $\Diamond(c_1\wedge c_2)\leq \Diamond c_1\wedge \Diamond c_2$.

2. In general, $f^{-\Diamond}$ need not be a \emph{proper} filter, even if $f$ is. However, let us show that this is the case when $f$ is a proper filter of  $\mathbf{Fm}$. Indeed, in this algebra, $f^{-\Diamond}(\bot) = \bigvee\{f([\phi])\mid [\Diamond \phi] \leq [\bot]\} = \bigvee\{f([\phi])\mid \Diamond \phi \vdash \bot \} = \bigvee\{f([\phi])\mid \phi \vdash \bot \} = f([\bot]) = 0$. The crucial inequality is the third to last, which holds since  $\Diamond \phi \vdash \bot$ iff $\phi \vdash \bot$. The right to left implication can be easily derived in 
$\mathbf{L}$. 
For the sake of the left to right implication we appeal to the completeness of 
$\mathbf{L}$ 
with respect to the class of all normal lattice expansions of the appropriate signature \cite{CoPa:non-dist} and reason contrapositively. Suppose $\phi \not\vdash \bot$. Then, by this completeness theorem, there is a lattice expansion $\mathbb{C}$ and assignment $v$ on $\mathbb{C}$ such that $v(\phi) \neq 0$. Now consider the algebra $\mathbb{C}'$ obtained from $\mathbb{C}$ by adding a new least element $0'$ and extending the $\Diamond$-operation by declaring $\Diamond 0' = 0'$. We keep the assignment $v$ unchanged. It is easy to check that $\mathbb{C}'$ is a normal lattice expansion, and that $v(\Diamond \phi) \geq 0 > 0'$ and hence $\Diamond \phi \not \vdash \bot$. 

Items 3 and 4 are proven by  arguments which are dual to the ones above.

5. 
%
As to proving item 5,  we reason contrapositively. Suppose $\top \not\vdash \phi$ and $\top \not\vdash \psi$. 
By the completeness theorem to which we have appealed in the proof of item 2, there are lattice expansions $\mathbb{C}_1$ and $\mathbb{C}_2$ and corresponding assignments $v_i$ on $\mathbb{C}_i$ such that $v_1(\phi) \neq \top^{\mathbb{C}_1}$ and $v_2(\psi) \neq \top^{\mathbb{C}_2}$. Consider the  algebra $\mathbb{C}'$ obtained  by adding a new top element $\top'$ to  $\mathbb{C}_1\times \mathbb{C}_2$, defining the operation $\Diamond' = \Diamond^{\mathbb{C}'}$ by the same assignment of $\Diamond^{\mathbb{C}_1\times \mathbb{C}_2}$ on $\mathbb{C}_1\times \mathbb{C}_2$ and mapping $\top'$ to $(\Diamond \top)^{\mathbb{C}_1\times \mathbb{C}_2}$, and the operation $\Box' = \Box^{\mathbb{C}'}$ by the same assignment of $\Box^{\mathbb{C}_1\times \mathbb{C}_2}$ on $\mathbb{C}_1\times \mathbb{C}_2$, and mapping $\top'$  to itself. The normality (i.e.~finite meet-preservation) of $\Box'$ and the monotonicity of $\Diamond'$ follow immediately by construction. The normality (i.e.~finite join-preservation) of $\Diamond'$ is verified by cases: if $a\lor b\neq \top'$, then it immediately follows from the normality of $\Diamond^{\mathbb{C}_1\times\mathbb{C}_2}$.  If $a\lor b=\top'$, then by construction, either $a=\top'$ or $b=\top'$ (i.e.~$\top'$ is join-irreducible), and hence, the join-preservation of $\Diamond'$ is a consequence of its monotonicity. 
Consider  the valuation $v': \mathsf{Prop}\to \mathbb{C}'$ defined by the assignment $p\mapsto e (v_1(p), v_2(p))$, where $e: \mathbb{C}_1\times \mathbb{C}_1\to \mathbb{C}'$ is the natural embedding. 


Let us show, for all $\chi \in \mathcal{L}$, that if $(v_1(\chi), v_2(\chi)) \neq \top^{\mathbb{C}_1 \times \mathbb{C}_2}$, then $v'(\chi) \neq \top'$. We proceed by induction on $\chi$. The cases for atomic propositions and conjunction are immediate. The case for disjunction uses the join-irreducibility of $\top'$. When $\chi := \Diamond \theta$, then $v'(\Diamond \theta) = \Diamond' v'(\theta)\neq \top'$, since,  by construction, $\top'$ is not in the range of $\Diamond'$.

If $\chi: = \Box \theta$, then $v'(\chi) = v'(\Box \theta) = \Box' v'(\theta)$. Then the assumption that $(v_1(\chi), v_2(\chi)) \neq \top^{\mathbb{C}_1 \times \mathbb{C}_2}$ implies that $v'(\theta) \neq \top'$. Indeed,  if $v'(\theta) = \top'$, then, by induction hypothesis, $(v_1(\theta), v_2(\theta)) = (\top^{\mathbb{C}_1}, \top^{\mathbb{C}_2})$ and hence $(v_1(\Box \theta), v_2(\Box \theta)) = \top^{\mathbb{C}_1 \times \mathbb{C}_2}$. Therefore, from $v'(\theta) \neq \top'$, it follows from the definition of $\Box'$ that $v'(\Box \theta) = \Box' v'(\theta) \neq \top'$, which concludes the proof of the claim.

Clearly, $v_1(\phi) \neq \top^{\mathbb{C}_1}$ and $v_2(\psi) \neq \top^{\mathbb{C}_2}$ imply that $(v_1(\phi), v_2(\phi)) \neq \top^{\mathbb{C}_1 \times \mathbb{C}_2}$ and $(v_1(\psi), v_2(\psi)) \neq \top^{\mathbb{C}_1 \times \mathbb{C}_2}$. So, by the above claim,  $v'(\phi) \neq \top'$ and $v'(\psi) \neq \top'$, and hence, since $\top'$ is join-irreducible, $v'(\phi \vee \psi) \neq \top'$.

The proof of item 6 is dual to the one above.
 \end{proof}


\begin{lemma}\label{eq:premagicnew2} 
For any $f\in \mathsf{F}_{\mathbf{A}}(\mathbb{L})$ and any $u\in \mathsf{C}_{\mathbf{A}}(\mathbb{L})$,
\begin{enumerate}
\item $\bigwedge_{b\in \mathbb{L}} (f^{-\Diamond}(b)\to u(b)) = \bigwedge_{a\in \mathbb{L}} (f(a)\to u(\Diamond a))$;
\item $\bigwedge_{b\in \mathbb{L}} (f(b) \to u^{-\Box}(b)) = \bigwedge_{a\in \mathbb{L}} (f(\Box a)\to u(a))$. 
\end{enumerate}
\end{lemma}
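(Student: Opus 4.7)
The plan is to prove both identities by direct computation, unfolding the definitions of $f^{-\Diamond}$ and $u^{-\Box}$ and then invoking only two kinds of fact: the distribution of $\to$ over arbitrary joins and meets (available because $\mathbf{A}$ is complete frame-distributive and dually frame-distributive), and the monotonicity of $f$ and $u$.

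For item 1, I would start from
\[
\bigwedge_{b\in\mathbb{L}}(f^{-\Diamond}(b)\to u(b)) = \bigwedge_{b\in\mathbb{L}}\Bigl(\bigvee\{f(a)\mid \Diamond a\leq b\}\to u(b)\Bigr).
\]
Because $(\bigvee_i \alpha_i)\to \beta = \bigwedge_i(\alpha_i\to\beta)$, this rewrites as a double meet indexed by all pairs $(a,b)$ with $\Diamond a\leq b$; swapping the order of quantification, it suffices to show, for each fixed $a$, that $\bigwedge_{b:\,\Diamond a\leq b}(f(a)\to u(b)) = f(a)\to u(\Diamond a)$. The inequality $\leq$ is immediate by taking $b:=\Diamond a$. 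For $\geq$, recall that $u$, as the complement of a proper $\mathbf{A}$-ideal, is $\vee$-preserving and hence monotone; combined with the monotonicity of $\to$ in its second coordinate, this gives $f(a)\to u(\Diamond a)\leq f(a)\to u(b)$ for every $b\geq \Diamond a$, and then taking the meet over such $b$ yields the desired inequality.

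For item 2, the argument is dual. I would expand
\[
\bigwedge_{b\in\mathbb{L}}(f(b)\to u^{-\Box}(b)) = \bigwedge_{b\in\mathbb{L}}\Bigl(f(b)\to\bigwedge\{u(a)\mid b\leq \Box a\}\Bigr),
\]
use that $\alpha\to\bigwedge_i\beta_i=\bigwedge_i(\alpha\to\beta_i)$ to turn this into a double meet over pairs $(a,b)$ with $b\leq\Box a$, swap the order of quantification to fix $a$ first, and then show that $\bigwedge_{b:\,b\leq\Box a}(f(b)\to u(a)) = f(\Box a)\to u(a)$. The inequality $\leq$ follows by taking $b:=\Box a$; for $\geq$, note that $f$, being $\wedge$-preserving, is monotone, so $b\leq\Box a$ gives $f(b)\leq f(\Box a)$, and antitonicity of $\to$ in its first coordinate yields $f(\Box a)\to u(a)\leq f(b)\to u(a)$.

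I do not expect real obstacles: both identities amount to observing that the indexing sets $\{b\mid \Diamond a\leq b\}$ and $\{b\mid b\leq \Box a\}$ have extremal elements ($\Diamond a$ and $\Box a$ respectively) at which the inner meets are attained, thanks to the appropriate monotonicity of $u$ and $f$. The only care needed is to invoke frame-distributivity in the first step of item~1 (to turn $\to$ of a join in its first argument into a meet) and its dual in the first step of item~2; both are among the standing assumptions on $\mathbf{A}$, so no side conditions need to be checked.
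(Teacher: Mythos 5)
Your proof is correct and uses essentially the same ingredients as the paper's --- the complete join-reversal (resp.\ meet-preservation) of $\to$ in its first (resp.\ second) argument applied to the defining join of $f^{-\Diamond}$ (resp.\ meet of $u^{-\Box}$), together with the monotonicity of $u$ and $f$ --- merely repackaged as a single exchange-of-meets computation with the inner meet attained at the extremal index $b=\Diamond a$ (resp.\ $b=\Box a$), instead of the paper's two separate inequalities. One small mis-attribution: the laws $(\bigvee_i\alpha_i)\to\beta=\bigwedge_i(\alpha_i\to\beta)$ and $\alpha\to\bigwedge_i\beta_i=\bigwedge_i(\alpha\to\beta_i)$ follow from residuation (adjunction) in any complete residuated lattice, not from (dual) frame-distributivity, though of course they do hold here.
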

\begin{proof} For (1) we use the fact that
$f(a)\leq f^{-\Diamond}(\Diamond a)$ implies that  
$f^{-\Diamond}(\Diamond a)\to u(\Diamond a)\leq f(a)\to u(\Diamond a)$ for every $a\in \mathbb{L}$, which is enough to show that $\bigwedge_{b\in \mathbb{L}} (f^{-\Diamond}(b)\to u(b)) \leq \bigwedge_{a\in \mathbb{L}} (f(a)\to u(\Diamond a)).$ Conversely, to show that \[\bigwedge_{a\in \mathbb{L}} (f(a)\to u(\Diamond a))\leq \bigwedge_{b\in \mathbb{L}} (f^{-\Diamond}(b)\to u(b)),\] 
it is enough to show that, for every $b\in \mathbb{L}$,
 \[\bigwedge_{a\in \mathbb{L}} (f(a)\to u(\Diamond a))\leq f^{-\Diamond}(b)\to u(b),\] 
 i.e.~by definition of $f^{-\Diamond}(b)$ and the fact that $\to$ is completely join-reversing in its first coordinate,
  \[\bigwedge_{a\in \mathbb{L}} (f(a)\to u(\Diamond a))\leq \bigwedge_{\Diamond c\leq b} (f(c)\to u(b)).\] 
  Hence, let $c\in \mathbb{L}$ such that $\Diamond c\leq b$, and let us show that   \[\bigwedge_{a\in \mathbb{L}} (f(a)\to u(\Diamond a))\leq f(c)\to u(b).\] 
 Since $u$ is $\vee$-preserving, hence order-preserving, $\Diamond c\leq b$ implies $u(\Diamond c)\leq u(b)$, hence
 \[\bigwedge_{a\in \mathbb{L}} (f(a)\to u(\Diamond a))\leq f(c)\to u(\Diamond c) \leq f(c)\to u(b),\]
 as required. 
For (2), we use $u^{-\Box}(\Box a) \leq u(a)$ and the fact that $\to$ is order-preserving in the second coordinate to show the inequality 
$\bigwedge_{b\in \mathbb{L}} (f(b) \to u^{-\Box}(b)) \leq \bigwedge_{a\in \mathbb{L}} (f(\Box a)\to u(a))$. 
To show 
\[\bigwedge_{a\in \mathbb{L}} (f(\Box a)\to u(a)) \leq \bigwedge_{b\in \mathbb{L}} (f(b) \to u^{-\Box}(b))\] 
we can show that for any $b \in \mathbb{L}$ 
\[
\bigwedge_{a\in \mathbb{L}} (f(\Box a)\to u(a)) \leq f(b) \to u^{-\Box}(b). 
\]   
After applying the definition of $u^{-\Box}(b)$ and the fact that $\to$ is completely meet-preserving in its second coordinate, the above inequality is equivalent to 
\[
\bigwedge_{a\in \mathbb{L}} (f(\Box a)\to u(a)) \leq \bigwedge_{b \leq \Box c} ( f(b) \to u(c)). 
\]
Let $c \in \mathbb{L}$ with $b \leq \Box c$. Since $f$ is order-preserving we get 
\[ 
\bigwedge_{a\in \mathbb{L}} (f(\Box a)\to u(a)) \leq f(\Box c) \to u(c) \leq f(b) \to u(c). 
\]
\end{proof}
\begin{definition}
\label{def:canonical frame}
The {\em canonical} graph-based $\mathbf{A}$-{\em frame} associated with any $\mathbf{Fm}_{\ast}$ is the structure  $\mathbb{G}_{\mathbf{Fm}_{\ast}} = (Z, E, R_{\Diamond}, R_\Box)$  defined as follows:\footnote{Recall that for any set $W$, the $\mathbf{A}$-{\em subsethood} relation between elements of $\mathbf{A}$-subsets of $W$ is the map $S_W:\mathbf{A}^W\times \mathbf{A}^W\to \mathbf{A}$ defined as $S_W(f, g) :=\bigwedge_{w\in W}(f(w)\rightarrow g(w)) $. 
If $S_W(f, g) =1$ we also write $f\subseteq g$. 
}	
	{\small 
	\[Z:=\{(f,u)\in \mathsf{F}_{\mathbf{A}}(\mathbf{Fm}_{\ast})\times \mathsf{C}_{\mathbf{A}}(\mathbf{Fm}_{\ast})\mid \bigwedge_{\phi\in \mathbf{Fm}_{\ast}}(f(\phi)\to u(\phi))=1\}.\]
	}
		
	For any $z\in Z$ as above, we let $f_z$ and $u_z$ denote the first and the second coordinate of $z$, respectively. 
Then $E: Z\times Z\to \mathbf{A}$, $R_\Diamond :Z\times Z\to \mathbf{A}$ and $R_{\Box}: Z\times Z \to \mathbf{A}$ 
	are  defined as follows: \[E(z, z'): = \bigwedge_{\phi\in \mathbf{Fm_{\ast}}}(f_z(\phi)\to u_{z'}(\phi));\]
	{\small 
	 \[R_\Diamond(z, z') := \bigwedge_{\phi\in \mathbf{Fm}_{\ast}}(f_{z'}^{-\Diamond}(\phi)\to  u_z(\phi)) = \bigwedge_{\phi\in \mathbf{Fm}_{\ast}}(f_{z'}(\phi)\to  u_z(\Diamond\phi));\]
	}
{\small
\[
R_{\Box}(z,z'):= \bigwedge_{\phi \in \mathbf{Fm}_{\ast}} (f_z(\phi) \to u_{z'}^{-\Box}(\phi)) = \bigwedge_{\phi \in \mathbf{Fm}_{\ast}}
(f_z(\Box \phi)\to u_{z'}(\phi)).
\]
}
We will write $\mathbb{G} = (Z, E, R_{\Diamond}, R_\Box)$ for $\mathbb{G}_{\mathbf{Fm}_{\ast}} = (Z, E, R_{\Diamond}, R_\Box)$ whenever ${\mathbf{Fm}_{\ast}}$ is clear from the context.
	\end{definition}
	\begin{lemma}
	\label{lemma:compatibility}
	The structure $\mathbb{G}_{\mathbf{Fm}_{\ast}}$ of Definition \ref{def:canonical frame}
 is a graph-based $\mathbf{A}$-frame, in the sense specified at the beginning of the present section.
	\end{lemma}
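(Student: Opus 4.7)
The plan is to verify the two defining properties of a graph-based $\mathbf{A}$-frame: reflexivity of $E$, and $E$-compatibility of $R_\Box$ and $R_\Diamond$.

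Reflexivity of $E$ is essentially built into the definition of $Z$. For any $z = (f_z, u_z) \in Z$, the membership condition requires $\bigwedge_{\phi \in \mathbf{Fm}_\ast}(f_z(\phi) \to u_z(\phi)) = 1$, which is exactly $E(z,z)$. Hence $\Delta_Z \subseteq E$, so $E$ is reflexive.

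For the four compatibility conditions I would proceed uniformly: first apply Lemma~\ref{equivalence of I-compatible-mv} to reduce each stability requirement to its statement for singleton $\mathbf{A}$-subsets $\{\beta/y\}$. Then, after unfolding the definitions of $R_\Box^{[i]}$ and $R_\Diamond^{[i]}$ from Definition~\ref{def:graph:based:frame:and:model} (adapted to the order-dual setup of this section, with $Z_A = Z$ and $Z_X = \mathbf{A} \times Z$), each condition becomes an inequality between infima over $\mathbf{Fm}_\ast$. The key tool is Lemma~\ref{eq:premagicnew2}, which lets us rewrite the subsethood expressions defining $R_\Box(z,z')$ and $R_\Diamond(z,z')$ equivalently in the two forms $\bigwedge_{\phi}(f_z(\phi) \to u_{z'}^{-\Box}(\phi))$ and $\bigwedge_{\phi}(f_z(\Box\phi) \to u_{z'}(\phi))$ (dually for $R_\Diamond$). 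Switching between these two forms is what allows the Galois-closure computation on the left-hand side of each compatibility inequality to be absorbed into the defining form of the relation on the right-hand side. Lemma~\ref{lemma:f minus diam is filter} items (1) and (3) guarantee that the resulting pullbacks $f^{-\Diamond}$ and $u^{-\Box}$ remain $\mathbf{A}$-filters and ideal-complements respectively, so the pairs arising along the way still fit into the framework of the canonical construction.

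The main obstacle is showing that the Galois closure of each singleton-generated subset is in fact realized by pairs in $Z$, i.e.\ that the relevant suprema/infima over $Z$ can be computed using actual elements of $Z$ (rather than arbitrary filter--ideal-complement pairs). Given the pair $(f^{-\Diamond}, u)$ or $(f, u^{-\Box})$ produced by the above manipulations, one needs a Lindenbaum-style extension argument to enlarge this into a pair in $\mathsf{F}_{\mathbf{A}}(\mathbf{Fm}_\ast) \times \mathsf{C}_{\mathbf{A}}(\mathbf{Fm}_\ast)$ satisfying the defining compatibility $\bigwedge_\phi(f(\phi) \to u(\phi)) = 1$ and which is proper; here items (2) and (4) of Lemma~\ref{lemma:f minus diam is filter} ensure that properness is preserved by the modal pullbacks, and items (5) and (6) supply the join-prime/meet-prime behavior of $\top$ and $\bot$ in $\mathbf{Fm}_\ast$ needed to push this maximality argument through. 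Once this witnessing step is in place, each of the four compatibility inequalities follows by direct computation.
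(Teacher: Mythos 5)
Your overall architecture matches the paper's: each of the four compatibility conditions is verified by exhibiting a witness state $z'\in Z$ at which the relevant infimum over $Z$ is already small enough, with Lemma~\ref{eq:premagicnew2} supplying the two equivalent forms of $R_\Box$ and $R_\Diamond$ and Lemma~\ref{lemma:f minus diam is filter} guaranteeing the closure properties of the maps used to build the witnesses. (The appeal to Lemma~\ref{equivalence of I-compatible-mv} is unnecessary, since the compatibility conditions of Definition~\ref{def:graph:based:frame:and:model} are already stated for singletons; your observation that reflexivity of $E$ is exactly the membership condition defining $Z$ is correct and worth making explicit.)

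The gap is in the witnessing step, which you frame as a ``Lindenbaum-style extension'' or ``maximality argument''. No such argument appears, and it is unclear how one could work: the proof does not need \emph{some} pair in $Z$ extending $f_z^{-\Diamond}$ or $u_z^{-\Box}$, but a pair $z'$ for which the \emph{exact identity} $E(z',\cdot)=R_\Diamond(\cdot,z)$ (resp.\ its analogues for the other three cases) holds pointwise, since only then does the compatibility inequality collapse to the residuated-lattice tautology $\beta\leq(\beta\to\gamma)\to\gamma$. This forces a specific closed-form choice: the pullback on one coordinate is paired with a maximally permissive partner on the other (e.g.\ $z'=(f_z^{-\Diamond},u)$ with $u$ sending $\bot$ to $0$ and everything else to $1$), and membership in $Z$, i.e.\ $\bigwedge_\phi(f_{z'}(\phi)\to u_{z'}(\phi))=1$, is then checked directly from properness (items 2 and 4 of Lemma~\ref{lemma:f minus diam is filter}); an arbitrary extension would destroy the identity. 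Moreover, only two of the four witnesses have the shape $(f^{-\Diamond},u)$ or $(f,u^{-\Box})$; the other two use the precomposition maps $\phi\mapsto u_z(\Diamond\phi)$ and $\phi\mapsto f_z(\Box\phi)$ (adjusted at theorems, resp.\ at formulas entailing $\bot$), and it is precisely to show that these are $\vee$-preserving, resp.\ $\wedge$-preserving, that items 5 and 6 of Lemma~\ref{lemma:f minus diam is filter} are needed --- not to drive any maximality argument.
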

	\begin{proof}
We need to show that $R_{\Diamond}$ is $E$-compatible, i.e.,
\begin{align*}
(R_\Diamond^{[1]}[\{\beta / (\alpha, z) \}])^{[10]} &\subseteq R_\Diamond^{[1]}[\{\beta /  (\alpha, z) \}] \\
(R_\Diamond^{[0]}[\{\beta /  z \}])^{[01]} &\subseteq R_\Diamond^{[0]}[\{\beta / z \}],
\end{align*}
and that $R_{\Box}$ is $E$-compatible, i.e.,
\begin{align*}
(R_\Box^{[0]}[\{\beta /  (\alpha, z) \}])^{[10]} &\subseteq R_\Box^{[0]}[\{\beta / (\alpha, z) \}] \\
(R_\Box^{[1]}[\{\beta /  z \}])^{[01]} &\subseteq R_\Box^{[1]}[\{\beta /  z \}].
\end{align*}
Considering the second inclusion for $R_{\Diamond}$, by definition, for any $(\alpha, w)\in Z_X$,
\begin{center}
\begin{tabular}{c l}
&$R_{\Diamond}^{[0]}[\{\beta / z \}](\alpha, w)$ \\
 = & $\bigwedge_{z'\in Z_A}[\{\beta / z \}(z')\to  (R_{\Diamond}(w, z') \to \alpha)]$\\
  = & $\beta\to (R_{\Diamond}(w, z) \to \alpha)$\\
  & \\
 & $(R_\Diamond^{[0]}[\{\beta /  z \}])^{[01]} (\alpha, w)$\\
 = & $\bigwedge_{z'\in Z_A}[(R_\Diamond^{[0]}[\{\beta /  z \}])^{[0]}(z')\to (E(z', w)\to \alpha) ],$ \\
\end{tabular}
\end{center}
and hence it is enough to find some $z'\in Z$ such that 
\[(R_\Diamond^{[0]}[\{\beta /  z \}])^{[0]}(z')\to (E(z', w)\to \alpha)\leq \beta\to (R_{\Diamond}(w, z) \to \alpha),\]
i.e.
\begin{center}
{\small
\begin{tabular}{cl l}
&$\left(\bigwedge_{(\gamma, v)\in Z_X}[\beta\to (R_\Diamond(v, z)\to\gamma)]\to (E(z', v)\to\gamma)\right)$\\
  & $\to (E(z', w)\to \alpha) \leq \beta\to (R_{\Diamond}(w, z) \to \alpha)$ & ($\ast$)\\
\end{tabular}
}
\end{center}
Let $z'\in Z$ such that $u_{z'}: {\mathbf{Fm}_{\ast}}\to\mathbf{A}$ maps $\bot$ to $0$ and every other $\phi\in {\mathbf{Fm}_{\ast}}$ to $1$, and $f_{z'}: = f_z^{-\Diamond}$ (cf.~Lemma \ref{lemma:f minus diam is filter}.2). Then
\begin{center}
\begin{tabular}{rcl}
$E(z', w)$ & = & $\bigwedge_{\phi\in \mathbf{Fm}} f_z^{-\Diamond}(\phi)\to u_{w}(\phi)$\\
& = & $R_{\Diamond}(w, z)$,\\
\end{tabular}
\end{center}
and likewise $E(z', v) = R_{\Diamond}(v, z)$. Therefore, for this choice of $z'$, inequality ($\ast$) can be rewritten as follows:
\begin{center}
\begin{tabular}{cl }
&$\left(\bigwedge_{(\gamma, v)\in Z_X}[\beta\to (R_\Diamond(v, z)\to\gamma)]\to (R_{\Diamond}(v, z)\to\gamma)\right)$\\
  & $\to (R_{\Diamond}(w, z)\to \alpha) \leq \beta\to (R_{\Diamond}(w, z) \to \alpha)$ \\
\end{tabular}
\end{center}
The inequality above is true if \[\beta \leq \bigwedge_{(\gamma, v)\in Z_X}[\beta\to (R_\Diamond(v, z)\to\gamma)]\to (R_{\Diamond}(v, z)\to\gamma),\]
i.e.~if for every $(\gamma, v)\in Z_X$, 
\[\beta \leq [\beta\to (R_\Diamond(v, z)\to\gamma)]\to (R_{\Diamond}(v, z)\to\gamma),\]
which is an instance of a tautology in residuated lattices.

Let us show that $(R_\Diamond^{[1]}[\{\beta / (\alpha, z) \}])^{[10]} \subseteq R_\Diamond^{[1]}[\{\beta /  (\alpha, z) \}]$. By definition, for every $w\in Z_A$,

\begin{center}
{\small
\begin{tabular}{cl}
& $R_\Diamond^{[1]}[\{\beta /  (\alpha, z)\}] (w)$\\
= &$\bigwedge_{(\gamma, z')\in Z_X}[\{\beta /  (\alpha, z)\}(\gamma, z')\to (R_{\Diamond}(z', w)\to \gamma)]$\\
= &$\beta\to (R_{\Diamond}(z, w)\to \alpha)$\\
&\\
& $(R_\Diamond^{[1]}[\{\beta / (\alpha, z) \}])^{[10]}(w)$\\
= &$\bigwedge_{(\gamma, z')\in Z_X}[(R_\Diamond^{[1]}[\{\beta / (\alpha, z) \}])^{[1]}(\gamma, z')\to (E(w, z')\to \gamma)]$.\\
\end{tabular}
}
\end{center}
Hence it is enough to find some $(\gamma, z')\in Z_X$ such that 
\begin{multline*}
(R_\Diamond^{[1]}[\{\beta / (\alpha, z) \}])^{[1]}(\gamma, z')\to (E(w, z')\to \gamma) \\
\leq \beta\to ( R_{\Diamond}(z, w)\to \alpha),
\end{multline*}
i.e.
\begin{center}
\begin{tabular}{cll}
& $\left( \bigwedge_{v\in Z} (\beta\to (R_{\Diamond}(z, v)\to \alpha))\to (E(v, z')\to \gamma)\right)$\\
&$\to (E(w, z')\to \gamma)\leq \beta\to (R_{\Diamond}(z, w)\to \alpha)$ & ($\ast$)\\
\end{tabular}
\end{center}

Let $(\gamma, z') := (\alpha, z')$ such that $f_{z'}: \mathbf{Fm}_{\ast}\to\mathbf{A}$ maps $\top$ to $1$ and every other $\phi\in \mathbf{Fm}_{\ast}$ to $0$, and $u_{z'}: \mathbf{Fm}_{\ast} \to\mathbf{A}$ is  defined by the assignment 
\[
u_{z'}(\phi) = \left\{\begin{array}{ll}
1 & \text{if }  \top\vdash \phi\\
u_z(\Diamond\phi) & \text{otherwise. } \\
\end{array}\right.
\]
by definition,  $u_{z'}$ maps $\top$ to 1 and $\bot$ to 0; moreover, using Lemma \ref{lemma:f minus diam is filter}.5, it can be readily verified that $u_{z'}$ is $\vee$-preserving.  Then 
\begin{center}
\begin{tabular}{rcl}
$E(v, z')$ & = & $\bigwedge_{\phi\in \mathbf{Fm}_{\ast}} (f_v(\phi)\to u_{z'}(\phi))$\\
& = & $\bigwedge_{\phi\in \mathbf{Fm}_{\ast}} (f_v(\phi)\to u_{z}(\Diamond \phi))$\\
& = & $\bigwedge_{\phi\in \mathbf{Fm}_{\ast}} (f_v^{-\Diamond}(\phi)\to u_{z}(\phi))$\\
& = & $R_{\Diamond}(z, v)$,\\
\end{tabular}
\end{center}
and likewise $E(w, z') = R_{\Diamond}(z, w)$. Therefore, for this choice of $z'$, inequality ($\ast$) can be rewritten as follows:
\begin{center}
\begin{tabular}{cll}
& $\left( \bigwedge_{v\in Z} (\beta\to R_{\Diamond}(z, v)\to \alpha)\to (R_{\Diamond}(z, v)\to \alpha)\right)$\\
&$\to (R_{\Diamond}(z, w)\to \alpha)\leq \beta\to (R_{\Diamond}(z, w)\to \alpha)$ & ($\ast$)\\
\end{tabular}
\end{center}
which is shown to be true by the same argument as the one concluding the verification of  the previous inclusion. 

Let us show that $(R_\Box^{[0]}[\{\beta /  (\alpha, z) \}])^{[10]} \subseteq R_\Box^{[0]}[\{\beta / (\alpha, z) \}] $.
 For any $w \in Z_A$,
\begin{center}
{\small
\begin{tabular}{c l}
&$R_{\Box}^{[0]}[\{\beta / (\alpha,z) \}](w)$ \\
 = & $\bigwedge_{(\gamma,z')\in Z_X}[\{\beta / (\alpha,z) \}(\gamma,z')\to  (R_{\Box}(w, z') \to \gamma)]$\\
  = & $\beta\to (R_{\Box}(w, z) \to \alpha)$,\\
  & \\
 & $(R_\Box^{[0]}[\{\beta /(\alpha,z) \}])^{[10]} (w)$\\
 = & $\bigwedge_{(\gamma,z')\in Z_X}[(R_\Box^{[0]}[\{\beta /(\alpha,z)\}])^{[1]}(\gamma,z')\to (E(w,z')\to \gamma) ].$ \\
\end{tabular}
}
\end{center}
Hence, it is enough to find some $(\gamma, z')\in Z_X$ such that 
\begin{center}
\begin{tabular}{c l}
& $(R_\Box^{[0]}[\{\beta /(\alpha,z)\}])^{[1]}(\gamma,z')\to (E(w,z')\to \gamma)$\\
$ \leq  $&$\beta\to (R_{\Box}(w, z) \to \alpha)$,\\
\end{tabular}
\end{center}
i.e.
\begin{center}
\begin{tabular}{cll}
& $\left( \bigwedge_{v\in Z} (\beta\to (R_{\Box}(v, z)\to \alpha))\to (E(v, z')\to \gamma)\right)$\\
&$\to (E(w,z')\to \gamma) \leq  \beta\to (R_{\Box}(w, z) \to \alpha)$ & ($\ast$)\\
\end{tabular}
\end{center}

Let $(\gamma, z') := (\alpha, z')$ such that $f_{z'}: \mathbf{Fm}_{\ast}\to\mathbf{A}$ maps $\top$ to $1$ and every other $\phi\in \mathbf{Fm}_{\ast}$ to $0$, and $u_{z'}: = u^{-\Box}_{z}$ (cf.~Lemma \ref{lemma:f minus diam is filter}.4). Then
\begin{center}
\begin{tabular}{rcl}
$E(v, z')$ & = & $\bigwedge_{\phi\in \mathbf{Fm}} [f_v(\phi)\to u_{z}^{-\Box}(\phi)]$\\
& = & $R_{\Box}(v, z)$,\\
\end{tabular}
\end{center}
and likewise $E(w, z') = R_{\Box}(w, z)$. Therefore, for this choice of $z'$, inequality ($\ast$) can be rewritten as follows:
 
\begin{center}
\begin{tabular}{cll}
& $\left( \bigwedge_{v\in Z} (\beta\to (R_{\Box}(v, z)\to \alpha))\to (R_{\Box}(v, z)\to \alpha)\right)$\\
&$\to (R_{\Box}(w, z) \to \alpha) \leq  \beta\to (R_{\Box}(w, z) \to \alpha)$. & \\
\end{tabular}
\end{center}
The inequality above is true if \[\beta \leq \bigwedge_{v\in Z} (\beta\to (R_{\Box}(v, z)\to \alpha))\to (R_{\Box}(v, z)\to \alpha),\]
i.e.~if for every $v\in Z_A$, 
\[\beta \leq (\beta\to (R_{\Box}(v, z)\to \alpha))\to (R_{\Box}(v, z)\to \alpha),\]
which is an instance of a tautology in residuated lattices.

For the last inclusion,   for any $(\alpha, w)\in Z_X$,
\begin{center}
\begin{tabular}{c l}
&$R_{\Box}^{[1]}[\{\beta / z \}](\alpha,w)$ \\
 = & $\bigwedge_{z'\in Z_A}[\{\beta / z \}(z')\to  (R_{\Box}(z',w) \to \alpha)]$\\
  = & $\beta\to (R_{\Box}(z, w) \to \alpha)$,\\
  & \\
 & $(R_\Box^{[1]}[\{\beta /  z \}])^{[01]} (\alpha, w)$\\
 = & $\bigwedge_{z'\in Z_A}[(R_\Box^{[1]}[\{\beta /  z \}])^{[0]}(z')\to (E(z', w)\to \alpha) ]$, \\
\end{tabular}
\end{center}
and hence it is enough to find some $z'\in Z_A$ such that 
\[(R_\Box^{[1]}[\{\beta /  z \}])^{[0]}(z')\to (E(z', w)\to \alpha) \leq \beta\to (R_{\Box}(z, w) \to \alpha), \]
i.e.
\begin{center}
{\small
\begin{tabular}{cl l}
&$\left(\bigwedge_{(\gamma, v)\in Z_X}[\beta\to (R_\Box(z, v)\to\gamma)]\to (E(z', v)\to\gamma)\right)$\\
  & $\to (E(z', w)\to \alpha) \leq \beta\to (R_{\Box}(z, w) \to \alpha)$ & ($\ast$)\\
\end{tabular}
}
\end{center}
Let $z'\in Z_A$ such that $u_{z'}: \mathbf{Fm}_{\ast}\to\mathbf{A}$ maps $\bot$ to $0$ and every other $\phi\in \mathbf{Fm}_{\ast}$ to $1$, and $f_{z'}: \mathbf{Fm}_{\ast}\to\mathbf{A}$ is  defined by the assignment 
\[
f_{z'}(\phi) = \left\{\begin{array}{ll}
0 & \text{if }   \phi\vdash \bot\\
f_z(\Box\phi) & \text{otherwise. } \\
\end{array}\right.
\]
by definition,  $f_{z'}$ maps $\top$ to 1 and $\bot$ to 0; moreover, using Lemma \ref{lemma:f minus diam is filter}.6, it can be readily verified that $f_{z'}$ is $\wedge$-preserving.  Then 
\begin{center}
\begin{tabular}{rcl}
$E(z', w)$ & = & $\bigwedge_{\phi\in \mathbf{Fm}_{\ast}} (f_{z'}(\phi)\to u_{w}(\phi))$\\
& = & $\bigwedge_{\phi\in \mathbf{Fm}_{\ast}} (f_z(\Box\phi)\to u_{w}(\phi))$\\
& = & $\bigwedge_{\phi\in \mathbf{Fm}_{\ast}} (f_z(\phi)\to u_{w}^{-\Box}(\phi))$\\
& = & $R_{\Box}(z, w)$,\\
\end{tabular}
\end{center}
and likewise $E(z', v) = R_{\Box}(z, v)$. Therefore, for this choice of $z'$, inequality ($\ast$) can be rewritten as follows:

\begin{center}
\begin{tabular}{cl l}
&$\left(\bigwedge_{(\gamma, v)\in Z_X}[\beta\to (R_\Box(z, v)\to\gamma)]\to (R_\Box(z, v)\to\gamma)\right)$\\
  & $\to (R_{\Box}(z, w) \to \alpha) \leq \beta\to (R_{\Box}(z, w) \to \alpha)$. & \\
\end{tabular}
\end{center}

The inequality above is true if \[\beta \leq \bigwedge_{(\gamma, v)\in Z_X}[\beta\to (R_\Box(z, v)\to\gamma)]\to (R_\Box(z, v)\to\gamma),\]
i.e.~if for every $(\gamma, v)\in Z_X$, 
\[\beta \leq [\beta\to (R_\Box(z, v)\to\gamma)]\to (R_\Box(z, v)\to\gamma),\]
which is an instance of a tautology in residuated lattices.
\end{proof}
\begin{definition}
\label{def:canonical model}
The {\em canonical graph-based} $\mathbf{A}$-{\em model} associated with $\mathbf{Fm}_{\ast}$ is the structure  $\mathbb{M}_{\mathbf{Fm}_{\ast}} =(\mathbb{G}_{\mathbf{Fm}_{\ast}}, V)$ such that $\mathbb{G}_{\mathbf{Fm}_{\ast}}$ is the canonical graph-based $\mathbf{A}$-frame of Definition \ref{def:canonical frame}, and  if $p \in \mathsf{Prop}$, then $V(p) = (\val{p}, \descr{p})$ with $\val{p}: Z_A\to \mathbf{A}$ and $\descr{p}: Z_X\to \mathbf{A}$ defined by  $z\mapsto f_z(p)$ and  $(\alpha, z)\mapsto u_z(p)\to \alpha$, respectively.\footnote{We write $\mathbb{M}$ for $\mathbb{M}_{\mathbf{Fm}_{\ast}}$ when $\mathbf{Fm}_{\ast}$ is clear from the context.}
\end{definition}
\begin{lemma}
	The structure $\mathbb{G}_{\mathbf{Fm}_{\ast}}$ of Definition \ref{def:canonical model}
 is a graph-based $\mathbf{A}$-model.
	\end{lemma}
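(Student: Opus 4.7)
Since Lemma \ref{lemma:compatibility} already provides the frame-structure on $\mathbb{G}_{\mathbf{Fm}_{\ast}}$, and since the definition of a graph-based $\mathbf{A}$-model determines $V$ on all compound formulas from its action on propositional variables via the operations of $\mathbb{G}^+$, the only genuinely new obligation is to verify that, for every $p \in \mathsf{Prop}$, the pair $V(p) = (\val{p}, \descr{p})$ of Definition \ref{def:canonical model} is a formal $\mathbf{A}$-concept of $\mathbb{P_{\mathbb{X}}}$, i.e.~that $\val{p}^{\uparrow} = \descr{p}$ and $\descr{p}^{\downarrow} = \val{p}$. Unfolding the definitions of $(\cdot)^{\uparrow}$ and $(\cdot)^{\downarrow}$ for this polarity gives the two identities
\begin{align*}
\mathrm{(a)}\quad & \bigwedge_{z' \in Z}\bigl(f_{z'}(p) \to (E(z', z) \to \alpha)\bigr) = u_z(p) \to \alpha, \\
\mathrm{(b)}\quad & \bigwedge_{(\alpha, z') \in Z_X}\bigl((u_{z'}(p) \to \alpha) \to (E(z, z') \to \alpha)\bigr) = f_z(p).
\end{align*}

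A useful first simplification is the residuated-lattice identity $\bigwedge_{\alpha \in \mathbf{A}}\bigl((u \to \alpha)\to (e \to \alpha)\bigr) = e \to u$, whose $\geq$-direction follows from $(e \to u)\otimes e \otimes (u \to \alpha) \leq \alpha$ and commutative residuation, and whose $\leq$-direction is witnessed by the instance $\alpha := u$ together with the standing assumption $1 \to x = x$. This rewrites (b) as $\bigwedge_{z' \in Z}(E(z, z') \to u_{z'}(p)) = f_z(p)$. The $\geq$-direction of both identities is then immediate from the definitional bound $E(z, z') \leq f_z(p) \to u_{z'}(p)$ together with commutative residuation.

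For the $\leq$-direction of (a), I would exhibit a witness $z' = (f_{z'}, u_z) \in Z$ by taking $f_{z'}$ to be a ``$u_z(p)$-scaled principal $\mathbf{A}$-filter at $p$'': set $f_{z'}(\top) := 1$, $f_{z'}(\phi) := u_z(p)$ whenever $p \vdash \phi$ and $\phi \neq \top$, and $f_{z'}(\phi) := 0$ otherwise. A direct case analysis confirms that $f_{z'}$ is a proper $\mathbf{A}$-filter (using that $p \vdash \phi \wedge \psi$ iff $p \vdash \phi$ and $p \vdash \psi$), and the inclusion $f_{z'} \subseteq u_z$ follows from the monotonicity of $u_z$ together with $u_z(\top) = 1$; hence $E(z', z) = 1$, and the contribution to the meet in (a) at this $z'$ is exactly $u_z(p) \to (1 \to \alpha) = u_z(p) \to \alpha$.

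The main obstacle is the $\leq$-direction of (b), which calls for a dual witness: some $z' = (f_z, u_{z'}) \in Z$ with $u_{z'}$ a $\vee$-preserving map dominating $f_z$ pointwise and satisfying $u_{z'}(p) = f_z(p)$, so that $E(z, z') = 1$ and the contribution equals $f_z(p)$. The symmetric ``principal'' construction $u_{z'}(\phi) := \bigvee\{\bigvee_{i=1}^n f_z(\psi_i) : \bigvee_i \psi_i \leq \phi\}$ dominates $f_z$ and returns $f_z(p)$ at $p$, but in general fails to be $\vee$-preserving: in the absence of distributivity in $\mathbf{Fm}_{\ast}$, a bound $\bigvee_i \psi_i \leq \phi \vee \chi$ need not split into pieces respectively dominated by $\phi$ and by $\chi$. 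My plan to overcome this is a maximality argument (Zorn or transfinite induction) in the poset of pairs $(f, u) \in Z$ with $f \supseteq f_z$ and $u(p) \leq f_z(p)$: using the frame-distributivity of $\mathbf{A}$ to verify that directed unions of such pairs remain $\wedge$- resp.\ $\vee$-preserving (analogously to the closure arguments underlying Lemma \ref{lemma:f minus diam is filter}), one extracts a maximal element and argues that maximality simultaneously forces $u_{z'}(p) = f_z(p)$ and $E(z, z') = 1$. The chain-closure verification and the extraction of the witness from maximality form the technical heart of the proof.
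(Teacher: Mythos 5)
Your reduction of the lemma to the two concept identities for $V(p)$ is exactly the paper's strategy, your $\geq$-directions are correct, and your witness for the $\leq$-direction of (a) (the $u_z(p)$-scaled principal filter at $p$, paired with $u_z$, forcing $E(z',z)=1$) is a legitimate variant of the paper's witness (which instead uses the crisp principal filter $f_p$ and gets $E(z',z)=u_z(p)$). The genuine gap is your item (b), $\leq$-direction: this is precisely the step you defer to an unexecuted Zorn/maximality argument, and that argument is both missing (you never verify that chains of such pairs have upper bounds in $Z$, nor that maximality forces $u_{z'}(p)=f_z(p)$ and $E(z,z')=1$; it is far from clear that it does) and unnecessary. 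The obstacle you describe is an artefact of your candidate construction, which tries to compute $u_{z'}$ as a join over finite covers $\bigvee_i\psi_i\leq\phi$ and therefore runs into non-distributivity. The paper instead takes the three-valued step function $u_{p}(\phi):=0$ if $\phi\vdash\bot$, $u_p(\phi):=f_z(p)$ if $\phi\vdash p$ and $\phi\not\vdash\bot$, and $u_p(\phi):=1$ if $\phi\not\vdash p$ (paired with any proper filter below it, e.g.\ the principal filter at $\top$, and with the instance $\alpha:=f_z(p)$ in the un-rewritten form of (b)). Its $\vee$-preservation is a short case analysis using only that $\phi\vee\psi\vdash p$ iff $\phi\vdash p$ and $\psi\vdash p$ (and similarly for $\bot$); no lattice distributivity is involved. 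Monotonicity of $f_z$ then gives $f_z(\phi)\leq u_p(\phi)$ for all $\phi$, hence $E(z,z')=1$, and $u_p(p)=f_z(p)$, so in your own reformulation the witnessing term is $1\to f_z(p)=f_z(p)$, closing the inequality immediately.

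So: the overall architecture of your proposal matches the paper, but as it stands the proof is incomplete at its self-declared ``technical heart,'' and the fix is not a transfinite construction but the explicit witness above (dual in spirit to the step functions $u_\bot$, $u_{\phi_1\vee\phi_2}$, $u_{\Diamond\psi}$ used throughout the paper's Truth Lemma). One further small remark: the paper obtains the inclusion $\val{p}\subseteq\descr{p}^{[0]}$ for free from $\descr{p}\subseteq\val{p}^{[1]}$ by the Galois adjunction, so only three of the four inequalities need direct arguments; your symmetric treatment is correct but does slightly more work than necessary.
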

	\begin{proof}
It is enough to show that $\val{p}^{[1]}=\descr{p}$ and $\val{p}=\descr{p}^{[0]}$  for any $p \in \mathsf{Prop}$.  To show that $\descr{p} (\alpha, z)\leq \val{p}^{[1]}(\alpha, z)$ for any $(\alpha, z)\in Z_X$, by definition, we need to show that
\[u_z(p)\to\alpha\leq \bigwedge_{z'\in Z_A}(\val{p}(z')\to (E(z', z)\to\alpha)),\]
i.e.~that for every $z'\in Z_A$, 
\[u_z(p)\to\alpha\leq \val{p}(z')\to (E(z', z)\to\alpha).\]
By definition, the inequality above is equivalent to
\[u_z(p)\to\alpha\leq f_{z'}(p)\to (\bigwedge_{\phi\in \mathbf{Fm}_{\ast}} [f_{z'}(\phi)\to u_z(\phi)]\to\alpha).\]
Since $\bigwedge_{\phi\in \mathbf{Fm}_{\ast}} [f_{z'}(\phi)\to u_z(\phi)]\leq f_{z'}(p)\to u_z(p)$ and $\to$ is order-reversing in its first coordinate, it is enough to show that 
\[u_z(p)\to\alpha\leq f_{z'}(p)\to [(f_{z'}(p)\to u_z(p))\to\alpha].\]
By residuation the inequality above is equivalent to
\[u_z(p)\to\alpha\leq [f_{z'}(p)\otimes (f_{z'}(p)\to u_z(p))]\to\alpha,\]
which is equivalent to
\[[f_{z'}(p)\otimes (f_{z'}(p)\to u_z(p))]\otimes [u_z(p)\to\alpha]\leq \alpha,\]
which is the instance of a tautology in residuated lattices.
Conversely, to show that $ \val{p}^{[1]}(\alpha, z)\leq \descr{p} (\alpha, z)$, i.e.
\[\bigwedge_{z'\in Z_A}(\val{p}(z')\to (E(z', z)\to\alpha))\leq u_z(p)\to\alpha,\]
it is enough to show that
\begin{equation}
\label{eqq}
\val{p}(z')\to (E(z', z)\to\alpha))\leq u_z(p)\to\alpha
\end{equation}
for some $z'\in Z$. Let $z': = (f_p, u)$ such that $u:\mathbf{Fm}_{\ast}\to \mathbf{A}$  maps $\bot$ to $0$ and every other element of $\mathbf{Fm}_{\ast}$ to $1$, and  $f_p:\mathbf{Fm}_{\ast}\to \mathbf{A}$ is defined by the assignment 
\[
f_p(\phi) = \left\{\begin{array}{ll}
1 & \text{if }  p\vdash \phi\\
0 & \text{otherwise.} \\
\end{array}\right.
\]
 Hence, $E(z', z) = \bigwedge_{\phi\in \mathbf{Fm}_{\ast}}(f_p(\phi)\to u_z(\phi))=\bigwedge_{p\vdash\phi}u_z(\phi) = u_z(p)$, the last identity holding since $u_z$ is order-preserving.  
 Therefore,  
 $\val{p}(z')\to (E(z', z)\to\alpha)) = f_p(p)\to (u_z(p)\to \alpha) = 1\to (u_z(p)\to \alpha) = u_z(p)\to \alpha$, which shows  \eqref{eqq}.
 
By adjunction, the inequality $\descr{p} \leq \val{p}^{[1]}$ proven above implies that   $\val{p}\leq\descr{p}^{[0]}$. Hence, to show that  $\val{p}=\descr{p}^{[0]}$, it is enough to show $\descr{p}^{[0]}(z)\leq\val{p}(z)$ for every $z\in Z$, i.e. 
\[\bigwedge_{(\alpha, z')\in Z_X} \descr{p}(\alpha, z')\to (E(z, z')\to \alpha)\leq f_z(p),\]
and to show the inequality above, it is enough to show that 
\begin{equation}\label{eqqq} \descr{p}(\alpha, z')\to (E(z, z')\to \alpha)\leq f_z(p)\end{equation}
for some $(\alpha, z')\in Z_X$.
Let $\alpha: =f_z(p)$ and $z':  = (f_{z'}, u_{p})$ be such that 
$u_{z'} = u_{p}: \mathbf{Fm}_{\ast}\to \mathbf{A}$ is defined by the following assignment:
 \[
u_{p}(\phi) = \left\{\begin{array}{ll}
0 & \text{if } \phi\vdash \bot\\
f_z(p) & \text{if } \phi\vdash p \mbox{ and } \phi \not\vdash \bot\\
1 & \text{if } \phi\not\vdash p.
\end{array}\right.
\]
By construction, $u_{z'}$ is $\vee$-, $\bot$- and $\top$-preserving. Moreover, $\descr{p}(\alpha, z') = u_{z'}(p)\to\alpha = f_z(p)\to f_z(p) = 1$, and $E(z, z') = \bigwedge_{\phi\in \mathbf{Fm}_{\ast}}(f_z(\phi)\to u_{z'}(\phi))  = \bigwedge_{\phi\vdash p}(f_z(\phi)\to f_z(p))  = 1$. 
 Hence, the left-hand side of \eqref{eqqq} can be equivalently rewritten as $1\to (1\to f_z(p)) = f_z(p)$, which shows \eqref{eqqq} and concludes the proof.
\end{proof}

Recall that $\mathbf{Fm}_0$ is the Lindenbaum-Tarski algebra of the logic $\mathbf{L}_0$ which is the axiomatic extension of  $\mathbf{L}$ with the axiom $\Box \bot \vdash \bot$. We write $\mathbf{L}_{0\ast}$ to denote an arbitrary (possible non-proper) extension of $\mathbf{L}_{0}$ and $\mathbf{Fm}_{0\ast}$ for the corresponding Lindenbaum-Tarski algebra. The axiom $\Box \bot \vdash \bot$ is required only in the inductive step for $\Box$-formulas in the following lemma. To emphasise this, we work mostly in the context of $\mathbf{Fm}_{\ast}$ and switch to $\mathbf{Fm}_{0\ast}$ only when required. 

\begin{lemma}[Truth Lemma]
\label{lemma:truthlemma}
	For every  $\phi\in \mathbf{Fm}_{0\ast}$, the maps $\val{\phi}: Z_A\to \mathbf{A}$ and $\descr{\phi}: Z_X\to \mathbf{A}$ coincide with those defined by the assignments $z\mapsto f_z(\phi)$ and $(\alpha, z)\mapsto u_z(\phi)\to \alpha$, respectively. 
\end{lemma}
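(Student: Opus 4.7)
The plan is to proceed by induction on the complexity of $\phi \in \mathbf{Fm}_{0\ast}$. The base (atomic) case holds by the definition of the canonical valuation. Since $\val{\phi}$ and $\descr{\phi}$ are linked by the stability identities $\val{\phi}^{[1]} = \descr{\phi}$ and $\descr{\phi}^{[0]} = \val{\phi}$, for each connective I will establish one of the two claimed equalities directly and recover the other by applying stability.

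The propositional cases reduce to the preservation properties of $f_z$ (an $\mathbf{A}$-filter, hence $\wedge$- and $\top$-preserving) and $u_z$ (the complement of a proper $\mathbf{A}$-ideal, hence $\vee$-, $\bot$- and $\top$-preserving). For instance, $\val{\phi \wedge \psi}(z) = \val{\phi}(z) \wedge \val{\psi}(z) = f_z(\phi) \wedge f_z(\psi) = f_z(\phi \wedge \psi)$ is immediate from the inductive hypothesis and the $\wedge$-preservation of $f_z$; the $\vee$ case is dual, using the identity $(u_z(\phi) \vee u_z(\psi)) \to \alpha = (u_z(\phi) \to \alpha) \wedge (u_z(\psi) \to \alpha)$. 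The $\top$ and $\bot$ clauses rely on the standing assumption $1 \to \alpha = \alpha$.

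The modal cases form the substance of the proof. For $\Diamond \phi$, I plan to verify $\val{\Diamond \phi}(z) = f_z(\Diamond \phi)$ by two inequalities. The $\leq$ direction unfolds the definition of $\val{\Diamond \phi}$, uses the inductive hypothesis to rewrite the integrand in terms of $f_{z'}$ and $u_z$, and invokes Lemma \ref{eq:premagicnew2}(1) together with the canonical definition $R_\Diamond(z, z') = \bigwedge_\phi(f_{z'}(\phi) \to u_z(\Diamond \phi))$ to bound the resulting infimum by $f_z(\Diamond \phi)$. The $\geq$ direction is the more delicate step: it calls for constructing a witness state $z' = (f_z^{-\Diamond}, u)$, with $u$ chosen (much as in the proof of Lemma \ref{lemma:compatibility}) so that $E(z', w)$ collapses to $R_\Diamond(w, z)$ for every $w$; Lemma \ref{lemma:f minus diam is filter}(2) ensures that $f_z^{-\Diamond}$ is a proper $\mathbf{A}$-filter, and one checks directly that the resulting pair satisfies the disjointness condition defining $Z$.

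The $\Box \phi$ case is dual, organised around $u_z^{-\Box}$ and Lemma \ref{eq:premagicnew2}(2). Here the axiom $\Box \bot \vdash \bot$ enters essentially: without it, the natural witness state built with $u_z^{-\Box}$ may fail to belong to $Z$, because the companion first coordinate could cease to be a \emph{proper} $\mathbf{A}$-filter (the $\Box$-analogue of the subtlety treated by Lemma \ref{lemma:f minus diam is filter}(2) for $\Diamond$). The main obstacle of the proof is precisely these witness constructions for the modal cases: for each $z$ and $\phi$, one must exhibit a concrete canonical-frame state whose $E$-distance from $z$ agrees exactly with $R_\Diamond(\cdot, z)$ (resp.\ $R_\Box(z, \cdot)$) and which simultaneously satisfies the disjointness condition $\bigwedge_\chi(f(\chi) \to u(\chi)) = 1$ defining membership in $Z$.
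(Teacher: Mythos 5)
Your outline has the right skeleton (induction on $\phi$, with witness states doing the real work), but it contains genuine gaps. First, the step ``establish one of the two equalities and recover the other by applying stability'' does not work as stated. Stability only gives $\descr{\phi}=\val{\phi}^{[1]}$ and $\val{\phi}=\descr{\phi}^{[0]}$; it does not tell you that $\val{\phi}^{[1]}$ is the claimed map $(\alpha,z)\mapsto u_z(\phi)\to\alpha$. Proving that identity is precisely the other half of the lemma, and it needs its own witness-state construction for \emph{every} connective, not only the modal ones. In the paper's proof even the cases $\top$, $\bot$, $\wedge$, $\vee$ each require exhibiting a principal-type state (e.g.\ $z'=(f_{\phi_1\wedge\phi_2},u_\bot)$ in the $\wedge$-case, or $\alpha:=f_z(\phi_1\vee\phi_2)$ with $z'=(f_\top,u_{\phi_1\vee\phi_2})$ in the $\vee$-case) to obtain the inequality $\bigwedge_{z'\in Z_A}[f_{z'}(\chi)\to(E(z',z)\to\alpha)]\leq u_z(\chi)\to\alpha$ (resp.\ its order-dual); only the converse inequality comes for free from $E(z',z)\leq f_{z'}(\chi)\to u_z(\chi)$ and residuation. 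So the claim that the propositional cases ``reduce to the preservation properties of $f_z$ and $u_z$'' is not accurate, unless you separately prove --- with exactly these witnesses --- that the pair $(z\mapsto f_z(\phi),\ (\alpha,z)\mapsto u_z(\phi)\to\alpha)$ is itself Galois-stable in the canonical polarity, which is essentially the content of the lemma.

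Second, the modal witnesses you sketch are the wrong ones: the state $(f_z^{-\Diamond},u)$ chosen so that $E(z',\cdot)$ collapses to $R_\Diamond(\cdot,z)$, and its $u_z^{-\Box}$ analogue, are the witnesses of the $E$-compatibility proof (Lemma \ref{lemma:compatibility}), not of the Truth Lemma. In the $\Diamond$-case what is needed is a state $z'$ with $f_{z'}(\psi)=1$ and $R_\Diamond(z,z')\geq u_z(\Diamond\psi)$ --- the paper takes $z'=(f_\psi,u_\bot)$ --- and, for the other component, $\alpha:=f_z(\Diamond\psi)$ with $z'=(f_\top,u_{\Diamond\psi})$ so that $E(z,z')=1$; your witness can have $f_z^{-\Diamond}(\psi)=0$, which trivialises the relevant conjunct and bounds nothing. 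Relatedly, your account of where $\Box\bot\vdash\bot$ enters is off: properness of $u^{-\Box}$ is already Lemma \ref{lemma:f minus diam is filter}.4 (no extra axiom), and the companion coordinate in the relevant constructions is the always-proper filter $f_\top$. The axiom is used, rather, to guarantee $R_\Box(z,z')=1$ for the witness $z'=(f_\top,u_\psi)$ in the $\Box$-case: the conjunct at $\chi=\bot$ is $f_z(\Box\bot)\to u_{z'}(\bot)=f_z(\Box\bot)\to 0$, and this equals $1$ only because $\Box\bot=\bot$ in $\mathbf{Fm}_{0\ast}$ forces $f_z(\Box\bot)=0$.
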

\begin{proof}
We proceed by  induction on $\phi$. If $\phi: = p\in \mathsf{Prop}$, the statement follows immediately from  Definition \ref{def:canonical model}. 

If $\phi: =\top$, then $\val{\top}(z) = 1 = f_z(\top)$ since $\mathbf{A}$-filters are $\top$-preserving. Moreover,
\begin{center}
\begin{tabular}{r cl}
 $\descr{\top}(\alpha, z)$& $=$& $\val{\top}^{[1]}(\alpha, z)$\\
 & $=$& $ \bigwedge_{z'\in Z_A} [\val{\top}(z')\to (E(z', z)\to\alpha)]$\\
  & $=$& $ \bigwedge_{z'\in Z_A} [f_{z'}(\top)\to (E(z', z)\to\alpha)]$\\
  & $=$& $ \bigwedge_{z'\in Z_A} [E(z', z)\to\alpha]$. \\
  \end{tabular}
 \end{center}
 So, to show that $u_z(\top)\to \alpha\leq \descr{\top}(\alpha, z)$, we need to show that for every $z'\in Z$,
\[u_z(\top)\to \alpha\leq E(z', z)\to\alpha,\]
and for this, it is enough to show that 
\[\bigwedge_{\psi\in \mathbf{Fm}_{\ast}}[f_{z'}(\psi)\to u_z(\psi)]\leq u_z(\top),\]
which is true, since by definition, $u_z(\top) = 1$.
To show that $\descr{\top}(\alpha, z)\leq  u_z(\top)\to \alpha$, i.e.~that \[\bigwedge_{z'\in Z_A} [E(z', z)\to\alpha]\leq u_z(\top)\to \alpha,\]
it is enough  to find some $z'\in Z$ such that  $E(z', z)\to\alpha\leq u_z(\top)\to \alpha$. Let $z': = (f_\top, u)$ such that $u:\mathbf{Fm}_{\ast}\to \mathbf{A}$ maps $\top$ to $1$ and every other element of $\mathbf{Fm}_{\ast}$ to $0$, and  $f_\top:\mathbf{Fm}_{\ast}\to \mathbf{A}$ is defined by the assignment 
\[
f_\top(\phi) = \left\{\begin{array}{ll}
1 & \text{if }  \top\vdash \phi\\
0 & \text{otherwise. } \\
\end{array}\right.
\]\
By definition, $E(z', z) = \bigwedge_{\psi\in \mathbf{Fm}_{\ast}}[f_{z'}(\psi)\to u_z(\psi)] =  \bigwedge_{\top\vdash \psi}[1\to u_z(\psi)]=  \bigwedge_{\top\vdash \psi}u_z(\psi)
\geq u_z(\top)$, the last inequality being due to the fact that  $u_z$ is order-preserving. 
Hence, $E(z', z)\to\alpha\leq u_z(\top)\to \alpha$, as required.

If $\phi: =\bot$, then $\descr{\bot}(\alpha, z) = 1 = u_z(\bot)\to \alpha$ since complements of $\mathbf{A}$-ideals are $\bot$-preserving. Let us show that $\val{\bot}(z) = f_z(\bot)$. The inequality $f_z(\bot)\leq \val{\bot}(z) $ follows immediately from the fact that $f_z$ is proper and hence $f_z(\bot) = 0$. To show that $\val{\bot}(z) \leq f_z(\bot)$, by definition  $\val{\bot}(z)=\descr{\bot}^{[0]}(z) =  \bigwedge_{(\alpha, z')\in Z_X} [(u_{z'}(\bot)\to\alpha)\to (E(z, z')\to\alpha)]$,
%
hence, it is enough to find some $(\alpha, z')\in Z_X$ such that  \begin{equation}\label{eqqqqqq}(u_{z'}(\bot)\to\alpha)\to (E(z, z')\to\alpha) \leq f_z(\bot).\end{equation}
Let $\alpha: = f_z(\bot)$ and  let $z': = (f_{\top}, u_{\bot})$ such that $f_{\top}:\mathbf{Fm}_{\ast}\to \mathbf{A}$ is defined as indicated above in the base case for $\phi: = \top$, and  $u_{\bot}:\mathbf{Fm}_{\ast} \to \mathbf{A}$ is defined by the assignment 
	\[
u_{\bot}(\psi) = \left\{\begin{array}{ll}
0 & \text{if } \psi\vdash \bot\\
1 & \text{if }\psi\not\vdash \bot.
\end{array}\right.
\]
By definition and since $f_z$ is order-preserving and $\bot$-preserving, $E(z, z') = \bigwedge_{\psi\in \mathbf{Fm}_{\ast}}[f_z(\psi)\to u_{\bot}(\psi)]  = 1$. Hence, \eqref{eqqqqqq} can be rewritten as follows:
\[(f_z(\bot)\to f_z(\bot))\to f_z(\bot)\leq f_z(\bot),\]
which is true since $f_z(\bot)\to f_z(\bot) =1$ and $1\to f_z(\bot) = f_z(\bot)$.

If  $\phi: = \phi_1\wedge\phi_2$, then $\val{\phi_1\wedge\phi_2}(z) = (\val{\phi_1} \wedge \val{\phi_2})(z) = \val{\phi_1}(z) \wedge \val{\phi_2}(z) = f_z(\phi_1) \wedge f_z(\phi_2) = f_z(\phi_1 \wedge \phi_2)$. 
Let us  show that $\descr{\phi_1 \wedge \phi_2}(\alpha, z) = u_z(\phi_1 \wedge \phi_2)\to \alpha$. By definition,
\begin{center}
\begin{tabular}{cl}
& $\descr{\phi_1 \wedge \phi_2}(\alpha, z)$\\
= & $\val{\phi_1 \wedge \phi_2}^{[1]}(\alpha, z)$\\
= & $\bigwedge_{z'\in Z}[\val{\phi_1 \wedge \phi_2}(z')\to (E(z', z)\to \alpha)]$\\
= & $\bigwedge_{z'\in Z}[f_{z'}(\phi_1 \wedge \phi_2)\to (E(z', z)\to \alpha)]$.\\
\end{tabular}
\end{center}
Hence,  to show that $u_z(\phi_1 \wedge \phi_2)\to \alpha\leq \descr{\phi_1 \wedge \phi_2}(\alpha, z)$, we need to show that for every $z'\in Z$,
\[u_z(\phi_1 \wedge \phi_2)\to \alpha\leq f_{z'}(\phi_1\wedge \phi_2)\to (E(z', z)\to \alpha).\]
Since by definition $E(z', z) = \bigwedge_{\phi\in \mathbf{Fm}_{\ast}}[f_{z'}(\phi)\to u_z(\phi)]\leq f_{z'}(\phi_1 \wedge \phi_2)\to u_z(\phi_1 \wedge \phi_2)$ and $\to$ is order-reversing in the first coordinate and order-preserving in the second one, 
it is enough to show that for every $z'\in Z$,
\begin{center}
\begin{tabular}{cl}
& $u_z(\phi_1 \wedge \phi_2)\to \alpha$\\
$\leq$ &$ f_{z'}(\phi_1\wedge \phi_2)\to ((f_{z'}(\phi_1 \wedge \phi_2)\to u_z(\phi_1 \wedge \phi_2))\to \alpha)$.\\
\end{tabular}
\end{center}

By residuation, the above inequality is equivalent to
\begin{center}
\begin{tabular}{cl}
& $u_z(\phi_1 \wedge \phi_2)\to \alpha$\\
$\leq$ &$[f_{z'}(\phi_1\wedge \phi_2)\otimes (f_{z'}(\phi_1 \wedge \phi_2)\to u_z(\phi_1 \wedge \phi_2))]\to \alpha$.\\
\end{tabular}
\end{center}
The above inequality is true if 
\[f_{z'}(\phi_1\wedge \phi_2)\otimes (f_{z'}(\phi_1 \wedge \phi_2)\to u_z(\phi_1 \wedge \phi_2))\leq u_z(\phi_1 \wedge \phi_2),\]
which is an instance of a tautology in residuated lattices.

To show that $\descr{\phi_1 \wedge \phi_2}(\alpha, z)\leq u_z(\phi_1 \wedge \phi_2)\to \alpha$, it is enough to find some $z'\in Z$ such that 
\[f_{z'}(\phi_1 \wedge \phi_2)\to (E(z', z)\to \alpha)\leq u_z(\phi_1 \wedge \phi_2)\to \alpha.\]
Let $z': = (f_{\phi_1 \wedge \phi_2}, u_\bot)$ such that $u_\bot:\mathbf{Fm}_{\ast}\to \mathbf{A}$ 
is defined as indicated above in the base case for $\phi: = \bot$,
and  $f_{\phi_1 \wedge \phi_2}:\mathbf{Fm}_{\ast}\to \mathbf{A}$ is defined by the assignment 
\[
f_{\phi_1 \wedge \phi_2}(\psi) = \left\{\begin{array}{ll}
1 & \text{if }  \phi_1 \wedge \phi_2\vdash \psi\\
0 & \text{otherwise. } \\
\end{array}\right.
\]

For $z': = z$, since $f_{z'}(\phi_1 \wedge \phi_2) = 1$ and $1\to (E(z', z)\to \alpha) = E(z', z)\to \alpha$,  the inequality above becomes
\[E(z', z)\to \alpha\leq u_z(\phi_1 \wedge \phi_2)\to \alpha,\]
to verify which, it is enough to show that $u_z(\phi_1 \wedge \phi_2)\leq E(z', z)$. Indeed, 
by definition, $E(z', z) =  \bigwedge_{\psi\in \mathbf{Fm}_{\ast}}[f_{z'}(\psi)\to u_z(\psi)] =  \bigwedge_{\phi_1 \wedge \phi_2\vdash \psi}[1\to u_z(\psi)]=  \bigwedge_{\phi_1 \wedge \phi_2\vdash \psi}u_z(\psi)\geq u_z(\phi_1 \wedge \phi_2)$, the last inequality being due to the fact that  $u_z$ is order-preserving.

If  $\phi: = \phi_1\vee\phi_2$, then $\descr{\phi_1\vee\phi_2}(\alpha, z) = (\descr{\phi_1} \wedge \descr{\phi_2})(\alpha, z) = \descr{\phi_1}(\alpha, z) \wedge \descr{\phi_2}(\alpha, z) = (u_z(\phi_1)\to\alpha) \wedge (u_z(\phi_2)\to \alpha) = (u_z(\phi_1)\vee u_z(\phi_2))\to \alpha)=  u_z(\phi_1 \vee \phi_2)\to \alpha$. 
Let us  show that $\val{\phi_1 \vee \phi_2}(z) = f_z(\phi_1 \vee \phi_2)$. By definition,
\begin{center}
\begin{tabular}{cl}
& $\val{\phi_1 \vee \phi_2}(z)$\\
= & $\descr{\phi_1 \vee \phi_2}^{[0]}(z)$\\
= & $\bigwedge_{(\alpha, z')\in Z_X}[\descr{\phi_1 \vee \phi_2}(\alpha, z')\to (E(z, z')\to \alpha)]$\\
= & $\bigwedge_{(\alpha, z')\in Z_X}[(u_{z'}(\phi_1 \vee \phi_2)\to\alpha)\to (E(z, z')\to \alpha)]$.\\
\end{tabular}
\end{center}
Hence,  to show that $f_z(\phi_1 \vee \phi_2)\leq \val{\phi_1 \vee \phi_2}(z)$, we need to show that for every $(\alpha, z')\in Z_X$,
\[f_z(\phi_1 \vee \phi_2)\leq (u_{z'}(\phi_1 \vee \phi_2)\to\alpha)\to (E(z, z')\to \alpha).\]
Since by definition $E(z, z') = \bigwedge_{\psi\in \mathbf{Fm}_{\ast}}[f_{z}(\psi)\to u_{z'}(\psi)]\leq f_{z}(\phi_1 \vee \phi_2)\to u_{z'}(\phi_1 \vee \phi_2)$ and $\to$ is order-reversing in the first coordinate and order-preserving in the second one, 
it is enough to show that for every $(\alpha, z')\in Z_X$,
\begin{multline*}
f_z(\phi_1 \vee \phi_2) \leq \\ 
(u_{z'}(\phi_1 \vee \phi_2)\to\alpha)\to ((f_{z}(\phi_1 \vee \phi_2)\to u_{z'}(\phi_1 \vee \phi_2))\to \alpha).
\end{multline*}

By residuation, associativity and commutativity of $\otimes$, the inequality above is equivalent to
\begin{center}
\begin{tabular}{cl}
& $ f_z(\phi_1 \vee \phi_2)\otimes (f_{z}(\phi_1 \vee \phi_2)\to u_{z'}(\phi_1 \vee \phi_2))$\\
 &$ \otimes (u_{z'}(\phi_1 \vee \phi_2)\to\alpha) \leq \alpha$,\\
\end{tabular}
\end{center}
which is a tautology in residuated lattices.

To show that $\val{\phi_1 \vee \phi_2}(z)\leq f_z(\phi_1 \vee \phi_2)$, it is enough to find some $(\alpha, z')\in Z_X$ such that 
\begin{equation}
\label{eqqqqq}
(u_{z'}(\phi_1 \vee \phi_2)\to\alpha)\to (E(z', z)\to \alpha)\leq f_z(\phi_1 \vee \phi_2).\end{equation}
Let $\alpha: = f_z(\phi_1 \vee \phi_2)$ and  let $z': = (f_{\top}, u_{\phi_1 \vee \phi_2})$ such that $f_{\top}:\mathbf{Fm}_{\ast}\to \mathbf{A}$ is defined as indicated above in the base case for $\phi:= \top$,
and  $u_{\phi_1 \vee \phi_2}:\mathbf{Fm}_{\ast}\to \mathbf{A}$ is defined by the assignment 
	\[
u_{\phi_1 \vee \phi_2}(\psi) = \left\{\begin{array}{ll}
0 & \text{if } \psi\vdash \bot\\
f_z(\phi_1 \vee \phi_2) & \text{if }  \psi\not\vdash \bot \mbox{ and } \psi\vdash \phi_1 \vee \phi_2 \\
1 & \text{if }\psi\not\vdash \phi_1 \vee \phi_2.
\end{array}\right.
\]
By definition and since $f_z$ is order-preserving and proper, $E(z, z') = \bigwedge_{\psi\in \mathbf{Fm}_{\ast}}[f_z(\psi)\to u_{\phi_1 \vee \phi_2}(\psi)] = \bigwedge_{\bot\not\dashv \psi\vdash\phi_1 \vee \phi_2} [f_z(\psi)\to f_z(\phi_1 \vee \phi_2) ] = 1$. Hence, \eqref{eqqqqq} can be rewritten as follows:
\[(f_z(\phi_1 \vee \phi_2)\to f_z(\phi_1 \vee \phi_2))\to f_z(\phi_1 \vee \phi_2)\leq f_z(\phi_1 \vee \phi_2),\]
which is true since $f_z(\phi_1 \vee \phi_2)\to f_z(\phi_1 \vee \phi_2) =1$ and $1\to f_z(\phi_1 \vee \phi_2) = f_z(\phi_1 \vee \phi_2)$.

If $\phi: = \Diamond\psi$, let us show that $\descr{\Diamond\psi} (\alpha, z) = u_z(\Diamond \psi)\to \alpha$. By definition,
\begin{center}
\begin{tabular}{rcl}
$\descr{\Diamond\psi} (\alpha, z)$ &
 = & $R^{[0]}_\Diamond[\val{\psi}](\alpha, z)$\\
&= & $\bigwedge_{z'\in Z_A}[\val{\psi}(z')\to  (R_{\Diamond}(z, z') \to \alpha)]$\\
&= & $\bigwedge_{z'\in Z_A}[f_{z'}(\psi)\to  (R_{\Diamond}(z, z') \to \alpha)]$,\\
\end{tabular}
\end{center}
Hence,  to show that $u_z(\Diamond\psi)\to \alpha\leq \descr{\Diamond\psi}(\alpha, z)$, we need to show that for every $z'\in Z$,
\[u_z(\Diamond\psi)\to \alpha\leq f_{z'}(\psi)\to  (R_{\Diamond}(z, z') \to \alpha).\]
By definition we have  $R_\Diamond(z, z') = \bigwedge_{\phi\in \mathbf{Fm}_{\ast}}(f_{z'}(\phi) \to u_{z}(\Diamond\phi)) 
\leq f_{z'}(\psi)\to u_{z}(\Diamond \psi)$, 
and since $\to$ is order-reversing in the first coordinate and order-preserving in the second one, 
it is enough to show that for every $z'\in Z$,
\[u_z(\Diamond\psi)\to \alpha\leq f_{z'}(\psi)\to  ((f_{z'}(\psi)\to u_{z}(\Diamond \psi)) \to \alpha).\]
By residuation, associativity and commutativity of $\otimes$, the inequality above is equivalent to
\[ [f_{z'}(\psi)\otimes (f_{z'}(\psi)\to u_{z}(\Diamond \psi))] \otimes (u_z(\Diamond\psi)\to \alpha) \leq \alpha\]
which is a tautology in residuated lattices.

To show that $\descr{\Diamond\psi}(\alpha, z)\leq u_z(\Diamond\psi)\to \alpha$, it is enough to find some $z'\in Z$ such that
\begin{equation}
\label{eqqqqqqq}
f_{z'}(\psi)\to  (R_{\Diamond}(z, z') \to \alpha)\leq u_z(\Diamond\psi)\to \alpha.\end{equation}
Let $z': = (f_{\psi}, u_\bot)$ such that $u_\bot:\mathbf{Fm}_{\ast}\to \mathbf{A}$ 
is defined as indicated above in the base case for $\phi: = \bot$,
 and  $f_{\psi}:\mathbf{Fm}_{\ast}\to \mathbf{A}$ is defined by the assignment 
\[
f_{\psi}(\phi) = \left\{\begin{array}{ll}
1 & \text{if }  \psi\vdash \phi\\
0 & \text{otherwise. } \\
\end{array}\right.
\]
By definition and Lemma \ref{eq:premagicnew2},
\begin{center}
\begin{tabular}{rcl}
$R_\Diamond(z, z')$ &  = &$\bigwedge_{\phi\in \mathbf{Fm}_{\ast}}(f_{z'}^{-\Diamond}(\phi)\to  u_{z}(\phi)) $\\
&=&$ \bigwedge_{\phi\in \mathbf{Fm}_{\ast}}(f_{z'}(\phi)\to  u_{z}(\Diamond \phi))$\\
&=&$ \bigwedge_{\psi\vdash \phi} u_{z}(\Diamond \phi)$\\
&$\geq$&$  u_{z}(\Diamond \psi)$,\\
\end{tabular}
\end{center}
the last inequality being due to the fact that $u_z$ and $\Diamond$ are order-preserving. Since $\to$ is order reversing in the first coordinate and order-preserving in the second one, to show \eqref{eqqqqqqq} it is enough to show that 
\[f_{z'}(\psi)\to  (u_{z}(\Diamond \psi) \to \alpha)\leq u_z(\Diamond\psi)\to \alpha.\]
This immediately follows from the fact that, by construction,  $f_{z'}(\psi) = 1$.

Let us show that $\val{\Diamond\psi}(z) = f_z(\Diamond\psi)$. By definition,
\begin{center}
\begin{tabular}{cl}
&$\val{\Diamond\psi} (z)$\\
  = & $\descr{\Diamond\psi}^{[0]}(z)$\\ 
 = & $\bigwedge_{(\alpha, z')\in Z_X}[\descr{\Diamond\psi}(\alpha, z')\to (E(z, z')\to\alpha)]$\\
 = & $\bigwedge_{(\alpha, z')\in Z_X}[(u_{z'}(\Diamond\psi)\to \alpha)\to (E(z, z')\to\alpha)].$\\
\end{tabular}
\end{center}
Hence,  to show that $f_z(\Diamond\psi)\leq \val{\Diamond\psi}(z)$, we need to show that for every $(\alpha, z')\in Z_X$,
\[f_z(\Diamond\psi)\leq (u_{z'}(\Diamond\psi)\to \alpha)\to (E(z, z')\to\alpha).\]
Since by definition $E(z, z') = \bigwedge_{\phi\in \mathbf{Fm}_{\ast}}[f_{z}(\phi)\to u_{z'}(\phi)]\leq f_{z}(\Diamond\psi)\to u_{z'}(\Diamond\psi)$ and $\to$ is order-reversing in the first coordinate and order-preserving in the second one, 
it is enough to show that for every $(\alpha, z')\in Z_X$,
\[f_z(\Diamond\psi)\leq (u_{z'}(\Diamond\psi)\to \alpha)\to ((f_{z}(\Diamond\psi)\to u_{z'}(\Diamond\psi))\to\alpha).\]
By residuation, associativity and commutativity of $\otimes$, the inequality above is equivalent to
\[ [f_z(\Diamond\psi)\otimes (f_{z}(\Diamond\psi)\to u_{z'}(\Diamond\psi))] \otimes (u_{z'}(\Diamond\psi)\to \alpha) \leq \alpha\]
which is a tautology in residuated lattices.

To show that $\val{\Diamond\psi}(z)\leq f_z(\Diamond\psi)$, it is enough to find  some $(\alpha, z')\in Z_X$ such that 
\begin{equation}
\label{eqqqqqqqq}(u_{z'}(\Diamond\psi)\to \alpha)\to (E(z, z')\to\alpha)\leq f_z(\Diamond\psi).\end{equation}
Let $\alpha: = f_z(\Diamond\psi)$ and 
  let $z': = (f_{\top}, u_{\Diamond\psi})$ such that $f_{\top}:\mathbf{Fm}_{\ast}\to \mathbf{A}$ is defined as indicated above in the base case for $\phi: = \top$, and  $u_{\Diamond\psi}:\mathbf{Fm}_{\ast}\to \mathbf{A}$ is defined by the assignment 
	\[
u_{\Diamond\psi}(\phi) = \left\{\begin{array}{ll}
0 & \text{if } \phi\vdash \bot\\
f_z(\Diamond\psi) & \text{if }  \phi\not\vdash \bot \mbox{ and } \phi\vdash \Diamond\psi \\
1 & \text{if }\phi\not\vdash \Diamond\psi.
\end{array}\right.
\]
By definition and since $f_z$ is order-preserving and proper, $E(z, z') = \bigwedge_{\phi\in \mathbf{Fm}_{\ast}}[f_z(\phi)\to u_{\Diamond\psi}(\phi)] = \bigwedge_{\phi\vdash\Diamond\psi} [f_z(\phi)\to f_z(\Diamond\psi) ] = 1$. Hence, \eqref{eqqqqqqqq} can be rewritten as follows:
\[(f_z(\Diamond\psi)\to f_z(\Diamond\psi))\to f_z(\Diamond\psi)\leq f_z(\Diamond\psi),\]
which is true since $f_z(\Diamond\psi)\to f_z(\Diamond\psi) =1$ and $1\to f_z(\Diamond\psi) = f_z(\Diamond\psi)$.

If $\phi:=\Box \psi$ we will show that $\val{\Box \psi}(z)=f_z(\Box \psi)$. By definition:
\begin{center}
{\small
\begin{tabular}{rcl}
$\val{\Box\psi} (z)$ & = & $R^{[0]}_\Box[\descr{\psi}](z)$\\
& = & $\bigwedge_{(\alpha,z') \in Z_X} [\descr{\psi}(\alpha,z') \to (R_{\Box}(z,z')\to \alpha)]$\\
& = & $\bigwedge_{(\alpha,z') \in Z_X} [(u_{z'}(\psi) \to \alpha) \to (R_{\Box}(z,z')\to \alpha)]$
\end{tabular}
}
\end{center}
Hence to show that $f_z(\Box \psi) \leq \val{\Box \psi}(z)$ we must show that for every $(\alpha,z') \in Z_X$ we have
\[f_z(\Box \psi) \leq (u_{z'}(\psi) \to \alpha) \to (R_{\Box}(z,z')\to \alpha).\] 

We have 
$R_{\Box}(z,z')=\bigwedge_{\phi \in \mathbf{Fm}_{0\ast}} (f_z(\Box \phi) \to u_{z'}(\phi))$ so 
$R_{\Box}(z,z')\leq f_z(\Box \psi) \to u_{z'}(\psi)$. 
Since $\to$ is order-reversing in the first coordinate and order-preserving in the 
second coordinate, it will be enough to show that 
\[ f_z(\Box \psi) \leq (u_{z'}(\psi) \to \alpha) \to ((f_z(\Box \psi) \to u_{z'}(\psi))\to \alpha).\]
Using residuation, and associativity and commutativty of $\otimes$ we can see that this is an instance of a tautology in residuated lattices. 

To show $\val{\Box \psi}(z)\leq f_z(\Box \psi)$ we must find $(\alpha, z') \in Z_X$ such that 

\begin{equation}\label{eq:Box1}
 (u_{z'}(\psi) \to \alpha) \to (R_{\Box}(z,z')\to \alpha) \leq f_z(\Box \psi).
\end{equation}

Let $\alpha: = f_z(\Box \psi)$ and   let $z': = (f_{\top}, u_{\psi})$ such that $f_{\top}:\mathbf{Fm}_{0\ast}\to \mathbf{A}$ is defined as indicated above in the base case for $\phi:= \top$,
and  $u_{\psi}:\mathbf{Fm}_{0\ast}\to \mathbf{A}$ is defined by the assignment 
	\[
u_{\psi}(\chi) = \left\{\begin{array}{ll}
0 & \text{if } \chi\vdash \bot\\
f_z(\Box\psi) & \text{if }  \chi\not\vdash \bot \mbox{ and } \chi\vdash  \psi \\
1 & \text{if }\chi\not\vdash \psi.
\end{array}\right.
\]
By definition, and since $f_{z}$ is order-preserving and $\Box\bot\leq \bot$ is valid,\footnote{This is the only place in the completeness proof where we are using an extra assumption.}  $R_{\Box}(z, z') = \bigwedge_{\chi\in\mathbf{Fm}_{0\ast}}(f_{z}(\Box\chi)\to u_{z'}(\chi)) = 1$. 
Hence, \eqref{eq:Box1} can be rewritten as follows:
\[(f_{z}(\Box\psi)\to f_z(\Box\psi)) \to (1\to f_z(\Box\psi)) \leq f_z(\Box \psi),\]
which is a tautology.\
Next, we want to show that $\descr{\Box \psi}(\alpha, z)=u_z(\Box\psi) \to \alpha$. By definition
\begin{center}
\begin{tabular}{rcl}
$\descr{\Box\psi} (\alpha,z)$ & = & $\val{\Box \psi}^{[1]}(\alpha,z)$\\
& = & $\bigwedge_{z' \in Z_A} [\val{\Box \psi}(z') \to (E(z',z) \to \alpha)]$\\
& = & $\bigwedge_{z' \in Z_A} [f_{z'}(\Box \psi) \to (E(z',z) \to \alpha)]$
\end{tabular}
\end{center}
To show that $\descr{\Box \psi}(\alpha, z) \leq u_z(\Box\psi) \to \alpha$ we just need to find $z' \in Z_A$ such 
that $f_{z'}(\Box \psi) \to (E(z',z) \to \alpha) \leq u_z(\Box \psi) \to \alpha$. Define $f_{\Box \psi} \colon \mathbf{Fm}_{0\ast} \to \mathbf{A}$
by
\[ f_{\Box \psi}(\chi) = \begin{cases} 1 & \text{ if } \Box \psi \vdash \chi \\ 0 & \text{otherwise.}\end{cases} \]
Now let $z'=(f_{\Box \psi}, u_{\bot})$ where $u_{\bot}$ is as defined in the base case. 
Clearly $f_{z'}(\Box \psi)=1$ and so 
$f_{z'}(\Box \psi) \to (E(z',z) \to \alpha) = E(z',z) \to \alpha$. Now 
\begin{center}
\begin{tabular}{rcl}
$E(z',z)$ & = & $\bigwedge_{\phi \in \mathbf{Fm}_{0\ast}} (f_{z'}(\phi) \to u_z(\phi))$ \\ 
& = & $\bigwedge_{\Box \psi \vdash \chi} (f_{z'}(\chi) \to u_z(\chi))$ \\
& = & $\bigwedge_{\Box \psi \vdash \chi} (1\to u_z(\chi)) $ \\
& = & $\bigwedge_{\Box \psi \vdash \chi} u_z(\chi) $ \\
& $\geq$ & $u_z(\Box \psi)$.
\end{tabular}
\end{center}
The last inequality follows from the fact that $u_z$ is order-preserving. Since $\to$ is order-reversing in the first coordinate we have
\[f_{z'}(\Box \psi) \to (E(z',z) \to \alpha) = E(z',z) \to \alpha \leq u_z(\Box \psi) \to \alpha.\]
 
To show that $ u_z(\Box\psi) \to \alpha\leq \descr{\Box \psi}(\alpha, z)$, we must show that 
for all $z' \in Z_A$ we have $u_z(\Box\psi) \to \alpha\leq f_{z'}(\Box \psi) \to (E(z',z) \to \alpha)$. By definition 
we have $E(z',z)=\bigwedge_{\phi \in \mathbf{Fm}_{0\ast}} (f_{z'}(\phi) \to u_{z}(\phi)) \leq f_{z'}(\Box \psi)\to u_{z}(\Box \psi)$. 
Therefore the desired inequality will follow if we can show 
\[
u_z(\Box\psi) \to \alpha\leq f_{z'}(\Box \psi) \to ((f_{z'}(\Box \psi)\to u_{z}(\Box \psi)) \to \alpha).
\]
By residuation, the above inequality is equivalent to 
\[ 
u_z(\Box\psi) \to \alpha\leq [f_{z'}(\Box \psi) \otimes (f_{z'}(\Box \psi)\to u_{z}(\Box \psi))] \to \alpha.
\]
This last inequality is true if 
\[ 
f_{z'}(\Box \psi) \otimes (f_{z'}(\Box \psi)\to u_{z}(\Box \psi)) \leq u_z(\Box\psi),
\]
which is an instance of a tautology in residuated lattices. 
\end{proof}
As a consequence of the truth lemma we get:
\begin{corollary}
	\begin{enumerate}
		\item The axiom $\Box\bot\vdash \bot$ is valid in $\mathbb{G}_{\mathbf{Fm}_{0\ast}}$.
		\item The axiom $\Box p \vdash p$ is valid in $\mathbb{G}_{\mathbf{Fm}_{1\ast}}$.
	\end{enumerate}
\end{corollary}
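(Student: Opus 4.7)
The plan is to derive both parts as consequences of the Truth Lemma (Lemma \ref{lemma:truthlemma}), combined, in the schematic case, with the correspondence characterizations from Proposition \ref{prop:correspondence}.

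For part (2), I would appeal to Proposition \ref{prop:correspondence}.3, which says that $\Box p \vdash p$ is valid in a graph-based frame iff $E \subseteq R_\Box$. I would verify this inclusion directly in the canonical frame of $\mathbf{Fm}_{1\ast}$. From Definition \ref{def:canonical frame},
\[
E(z, z') = \bigwedge_{\phi} (f_z(\phi) \to u_{z'}(\phi)), \qquad R_\Box(z, z') = \bigwedge_\phi (f_z(\Box\phi) \to u_{z'}(\phi)).
\]
Since $\Box\phi \vdash \phi$ is an axiom of $\mathbf{L}_{1\ast}$, order-preservation of the $\mathbf{A}$-filter $f_z$ gives $f_z(\Box\phi) \leq f_z(\phi)$. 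Antitonicity of $\to$ in its first argument then yields $f_z(\phi) \to u_{z'}(\phi) \leq f_z(\Box\phi) \to u_{z'}(\phi)$, and taking meets over $\phi$ gives $E(z, z') \leq R_\Box(z, z')$, as required.

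For part (1), the argument reduces frame validity to a single inequality $\Box \bot^{\mathbb{G}^+} \leq \bot^{\mathbb{G}^+}$ in the complex algebra. Indeed, since $\bot$ is a constant of the language, every homomorphic valuation $V$ on $\mathbb{G}_{\mathbf{Fm}_{0\ast}}$ satisfies $V(\bot) = \bot^{\mathbb{G}^+}$, so $V(\Box\bot) = \Box V(\bot) = \Box \bot^{\mathbb{G}^+}$ independently of $V$. By the Truth Lemma applied to the canonical valuation $V_{\text{can}}$, the first coordinate of $V_{\text{can}}(\Box\bot)$ is $z \mapsto f_z(\Box\bot)$; since $\Box\bot \vdash \bot$ is derivable in $\mathbf{L}_{0\ast}$ and $f_z$ is order-preserving and proper, this equals $z \mapsto f_z(\bot) = 0$, which agrees with the first coordinate of $V_{\text{can}}(\bot) = \bot^{\mathbb{G}^+}$. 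Because formal concepts are determined by their first coordinate, $V_{\text{can}}(\Box\bot) = \bot^{\mathbb{G}^+}$; homomorphy of $V_{\text{can}}$ then gives $\Box \bot^{\mathbb{G}^+} = \Box V_{\text{can}}(\bot) = V_{\text{can}}(\Box\bot) = \bot^{\mathbb{G}^+}$, from which frame validity follows.

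The main conceptual subtlety lies in the distinction between validity in the canonical model (which the Truth Lemma delivers essentially directly) and frame validity (validity under every valuation on the frame): for the schematic axiom in (2), these are \emph{a priori} distinct, so the correspondence characterization must be invoked; for (1), the two coincide because the sequent mentions only the constant $\bot$, whose interpretation is fixed across valuations. A second minor point is that, in the order-dual convention adopted in Section \ref{sec:completeness}, the statements of Proposition \ref{prop:correspondence} apply \emph{mutatis mutandis}, but the algebraic derivations above only use definitions of $E$ and $R_\Box$ intrinsic to the canonical frame, so no extra translation work is required.
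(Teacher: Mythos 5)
Your proposal is correct and follows essentially the same route as the paper: part (2) is reduced via Proposition \ref{prop:correspondence}.3 to the inclusion $E\subseteq R_\Box$ in the canonical frame, verified directly from Definition \ref{def:canonical frame}, and part (1) exploits that the axiom contains no atomic propositions, so its frame validity reduces to its truth under the (canonical) valuation, which the Truth Lemma settles. The only differences are cosmetic: the paper checks (1) on the intension side (using $u_z(\Box\bot)\leq u_z(\bot)$) rather than via extensions in the complex algebra, and for (2) it argues through $u_{z'}\subseteq u_{z'}^{-\Box}$ instead of $f_z(\Box\phi)\leq f_z(\phi)$, both of which are interchangeable with your steps.
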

\begin{proof}
1. Clearly, since the axiom above does not contain atomic propositions, its validity on $\mathbb{G}_{\mathbf{Fm}_{0\ast}}$ coincides with its satisfaction on $\mathbb{M}_{\mathbf{Fm}_{0\ast}}$. Hence, $\mathbb{M}_{\mathbf{Fm}_{0\ast}}\models \Box\bot\vdash \bot$ iff $\descr{\bot}\subseteq \descr{\Box\bot}$ iff $\descr{\bot}(\alpha, z)\leq \descr{\Box\bot}(\alpha, z)$ for every $(\alpha, z)\in Z_X$, iff (by Lemma \ref{lemma:truthlemma}) $u_z(\bot)\to \alpha\leq u_z(\Box \bot)\to \alpha$, iff $u_z(\Box\bot)\leq u_z(\bot)$, which is true, since $u_z: \mathbf{Fm}_{0\ast}\to \mathbf{A}$ is order-preserving. 

2.  By Proposition \ref{prop:correspondence}.3 it is enough to show that $E \subseteq R_{Box}$ in $\mathbb{G}_{\mathbf{Fm}_{1\ast}}$, i.e.\ that $E(z,z') \leq R_{\Box}(z,z')$ for all $z,z' \in Z$.\footnote{Recall the setting of the present section is different from that in Appendix \ref{sec:correspondence} (cf.\ the discussion at the beginning of the present section). However, it is not difficult to verify that Proposition \ref{prop:correspondence}.3 holds verbatim also in the present setting, by suitably adapting the chains of equivalences used in the proof.} Recall that \[
E(z, z') = \bigwedge_{\phi\in \mathbf{Fm_{\ast}}}(f_z(\phi)\to u_{z'}(\phi))
\]
and 
\[
R_{\Box}(z, z') = \bigwedge_{\phi\in \mathbf{Fm_{\ast}}}(f_z(\phi)\to u^{-\Box}_{z'}(\phi)).
\]
Hence it is enough to show that $u_{z'}(\phi) \leq u^{-\Box}_{z'}(\phi) := \bigwedge \{u_{z'}(\psi) \mid \phi \vdash \Box \psi \}$, i.e. that if $\phi \vdash \Box \psi$, then $u_{z'}(\phi) \leq u_{z'}(\psi)$. Indeed, $\phi \vdash \Box \psi$ and  $\Box \psi \vdash \psi$ imply $\phi \vdash \psi$ and hence $u_{z'}(\phi) \leq u_{z'}(\psi)$ follows from the monotonicity of $u_{z'}$.
\end{proof}
	
\begin{theorem} \label{thm:completeness}
	For $i \in \{0,1\}$, the normal $\mathcal{L}$-logic $\mathbf{L}_i$ is sound and complete w.r.t.~its corresponding class of graph-based $\mathbf{A}$-frames. 
\end{theorem}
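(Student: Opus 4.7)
The plan is to split the argument into the standard soundness and completeness directions, and to assemble it almost entirely from results already proven in the paper and appendix. For soundness, for each $i\in\{0,1\}$ let $\mathsf{K}_i$ denote the class of graph-based $\mathbf{A}$-frames whose additional relation $R_{\Box}$ satisfies the frame condition on the left-hand side of Proposition~\ref{prop:correspondence}(1) when $i=0$, and $E\subseteq R_{\Box}$ when $i=1$. The lemma preceding Definition~\ref{def:graph:based:frame:and:model} (paired with Lemma~\ref{prop:fplus}) guarantees that the complex algebra $\mathbb{G}^+$ of any graph-based $\mathbf{A}$-frame is a complete normal lattice expansion with completely meet-preserving $[R_{\Box}]$ and completely join-preserving $\langle R_{\Diamond}\rangle$; hence all axioms and rules of the basic logic $\mathbf{L}$ are validated on any such frame under any homomorphic valuation. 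The extra axioms  $\Box\bot\vdash\bot$ and $\Box p\vdash p$ are then validated on frames in $\mathsf{K}_0$ and $\mathsf{K}_1$, respectively, by Proposition~\ref{prop:correspondence}(1) and~(3).

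For completeness, I would argue contrapositively. Fix $i\in\{0,1\}$ and suppose $\phi\not\vdash_{\mathbf{L}_i}\psi$, so that $[\phi]\not\leq[\psi]$ in $\mathbf{Fm}_i$. Take $\mathbf{L}_{0\ast}:=\mathbf{L}_i$ (which is legitimate since $\mathbf{L}_1$ extends $\mathbf{L}_0$) and form the canonical graph-based model $\mathbb{M}_{\mathbf{Fm}_i}=(\mathbb{G}_{\mathbf{Fm}_i},V)$ of Definitions~\ref{def:canonical frame} and~\ref{def:canonical model}. By Lemma~\ref{lemma:compatibility}, $\mathbb{G}_{\mathbf{Fm}_i}$ is a graph-based $\mathbf{A}$-frame, and by the corollary following Lemma~\ref{lemma:truthlemma}, $\mathbb{G}_{\mathbf{Fm}_i}$ lies in $\mathsf{K}_i$ (either because $\Box\bot\vdash\bot$ is valid there, yielding the Proposition~\ref{prop:correspondence}(1) condition, or because $\Box p\vdash p$ is valid, yielding $E\subseteq R_{\Box}$). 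It therefore suffices to exhibit a point $z\in Z$ at which the sequent fails.

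To produce such a $z$, define $f_\phi:\mathbf{Fm}_i\to\mathbf{A}$ by $f_\phi(\chi):=1$ if $\phi\vdash\chi$ and $f_\phi(\chi):=0$ otherwise, and $u_\psi:\mathbf{Fm}_i\to\mathbf{A}$ by $u_\psi(\chi):=0$ if $\chi\vdash\psi$ and $u_\psi(\chi):=1$ otherwise. A quick verification shows that $f_\phi$ is a proper $\mathbf{A}$-filter ($\wedge$-preservation is immediate; $f_\phi(\bot)=0$ because $\phi\not\vdash\bot$, which follows from $\phi\not\vdash\psi$) and dually, using Lemma~\ref{lemma:f minus diam is filter}(5), that $u_\psi$ is the complement of a proper $\mathbf{A}$-ideal. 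Moreover $(f_\phi,u_\psi)\in Z$: indeed, $f_\phi(\chi)\leq u_\psi(\chi)$ for every $\chi$, since $f_\phi(\chi)=1$ forces $\phi\vdash\chi$, and the assumption $\chi\vdash\psi$ would then yield $\phi\vdash\psi$, a contradiction; hence $u_\psi(\chi)=1$. Setting $z:=(f_\phi,u_\psi)$, the Truth Lemma gives $\val{\phi}(z)=f_\phi(\phi)=1$ while $\val{\psi}(z)=f_\phi(\psi)=0$, so $\val{\phi}\not\subseteq\val{\psi}$ and $\mathbb{M}_{\mathbf{Fm}_i}\not\models\phi\vdash\psi$, proving completeness.

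The bulk of the technical work has already been discharged in Lemmas~\ref{lemma:compatibility} and~\ref{lemma:truthlemma}; the only place where attention is required in this final proof is the membership $(f_\phi,u_\psi)\in Z$ and the check that the soundness class $\mathsf{K}_i$ is indeed the one whose frame condition is both induced by $\mathbf{L}_i$ via Proposition~\ref{prop:correspondence} and verified on the canonical frame by the corollary to Lemma~\ref{lemma:truthlemma}. I do not anticipate a genuine obstacle beyond bookkeeping, provided one is careful that, for $i=0$, one uses the weaker frame condition of Proposition~\ref{prop:correspondence}(1) rather than the stronger $E\subseteq R_{\Box}$, so that the soundness and completeness classes genuinely coincide.
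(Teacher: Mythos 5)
Your proof is correct and follows essentially the same route as the paper: the completeness half constructs exactly the same state $z=(f_\phi,u_\psi)$ in the canonical model and invokes the Truth Lemma, the only (legitimate) shortcut being that you read off $\val{\psi}(z)=f_z(\psi)=f_\phi(\psi)=0$ directly from Lemma~\ref{lemma:truthlemma} where the paper recomputes $\descr{\psi}^{[0]}(z)$ by hand, and you additionally spell out the routine soundness half that the paper leaves implicit. One cosmetic nit: the $\vee$-preservation of $u_\psi$ follows immediately from the fact that $\chi_1\vee\chi_2\vdash\psi$ iff both $\chi_1\vdash\psi$ and $\chi_2\vdash\psi$, so the appeal to Lemma~\ref{lemma:f minus diam is filter}(5) (which concerns join-irreducibility of $\top$ and is used for a different construction) is unnecessary.
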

\begin{proof}
Consider an $\mathcal{L}$-sequent $\phi \vdash \psi$ that is not derivable in $\mathbf{L}_i$. In order to show that  $\mathbb{M}_{\mathbf{Fm}_{i}} \not\models \phi \vdash \psi$ (Definition \ref{def:Sequent:True:In:Model}), we  need to show that $\val{\phi}(z)\nleq\val{\psi}(z) = \descr{\psi}^{[0]}(z)$ for some  $z\in Z$.
Consider the proper filter $f_{\phi}$ and complement of proper ideal $u_{\psi}$ given by 
\[
f_{\phi}(\chi) = \left\{\begin{array}{ll}
1 &\text{if } \phi \vdash \chi\\
0 &\text{if } \phi \not \vdash \chi
\end{array} \right.
\]
and
\[
u_{\psi}(\chi) = \left\{\begin{array}{ll}
0 &\text{if } \chi \vdash \psi\\
1 &\text{if } \chi \not \vdash \psi
\end{array} \right.
\]
Then $\bigwedge_{\chi\in \mathbf{Fm}_i}(f_{\phi}(\chi)\to u_{\psi}(\chi)) = 1$, for else there would have to be a $\chi \in \mathbf{Fm}_i$ such that $f_{\phi}(\chi) = 1$ and $u_{\psi}(\chi) = 0$, which would mean that $\phi \vdash \chi$ and $\chi \vdash \psi$ and hence that $\phi \vdash \psi$, in contradiction with the assumption that $\phi \vdash \psi$ is not derivable. It follows that $z: = (f_{\phi}, u_{\psi})$ is a state in the canonical model $\mathbb{M}_{\mathbf{Fm}_i}$. 
By the Truth Lemma, $\val{\phi}(z) = f_{\phi}(\phi) = 1$, and moreover
\begin{center}
\begin{tabular}{cl}
   & $\descr{\psi}^{[0]}(z)$\\
= & $\bigwedge_{(\alpha, z')\in Z_X}\descr{\psi}(\alpha, z')\to (E(z, z')\to\alpha)$\\
$\leq$ & $\descr{\psi}(0, z)\to (E(z, z)\to 0)$\\
= & $(u_{\psi}(\psi) \to 0)\to (E(z, z)\to 0)$\\
= & $(0 \to 0)\to (1\to 0)$\\
 = & 0,
\end{tabular}
\end{center}
which proves the claim. 
\end{proof}
\end{document}